\newtheorem{theorem}{Theorem}[section]
\newtheorem*{theorem*}{Theorem}
\newtheorem{lemma}[theorem]{Lemma}
\newtheorem{proposition}[theorem]{Proposition}
\newtheorem{corollary}[theorem]{Corollary}
\theoremstyle{remark}
\newtheorem*{remark*}{Remark}
\newtheorem*{example*}{Example}
\newcommand{\partialgamma}{\partial^\gamma\!}
\newcommand{\Dgamma}{D^\gamma\!}
\newcommand{\Dalpha}{D^\alpha\!}
\newcommand{\Dbeta}{D^\beta\!}
\begin{document}
\title[ABC theorems for non-Archimedean entire functions]
{Generalized ABC theorems for non-Archimedean entire functions of several
variables in arbitrary characteristic}
\author{William Cherry}
\thanks{Partial financial support for this research was provided
by a Faculty Research Grant from the University of North Texas and
by the United States National Security Agency under
Grant Number H98230-07-1-0037.
The United States Government is authorized to reproduce and
distribute reprints not­withstanding any copyright notation herein.}
\address{Department of Mathematics\\University of North Texas\\
P.O. Box 311430, Denton, TX  76203-1430\\USA}
\email{wcherry@unt.edu}
\author{Cristina Toropu}
\address{Department of Mathematics and Statistics\\ MSC03 2150\\
1 University of New Mexico
\\ Albuquerque, New Mexico, 87131-0001\\
USA}
\email{ctoropu@hotmail.com}
\date{July 29, 2008}
\begin{abstract}
We prove  generalized ABC theorems for vanishing sums of non-Archimedean
entire functions of several variables in arbitrary characteristic.
\end{abstract}
\subjclass[2000]{11D04, 11D88, 32P05}
\keywords{ABC theorems, non-Archimedean entire functions, several variables,
Hasse derivatives, vanishing sums, Mason's theorem, generalized Wronskians}
\maketitle
%\begin{doublespacing}
\section{Introduction}
The well-known ABC Theorem for polynomials,
first proved by Stothers \cite{Stothers}
and often called ``Mason's Theorem'' \cite{MasonDiophEqn},
says that if $a+b=c,$
where $a$ and $b$ are relatively prime univariate polynomials
with at least one of the derivatives $a'$ or $b'$ not identically zero, then
$$
    \max\{\deg a, \deg b, \deg c\} \le \deg R(abc)-1.
$$
Here if $f$ is a polynomial, we use $R(f)$ to denote
$f/\gcd(f,f').$
In characteristic zero, $R(f)$
is simply the degree of the square free part of $f,$
also called the degree of the radical of $f,$  and is the
number of distinct zeros of $f$ in an algebraically closed field
containing the coefficients of $f.$   We note that the above theorem
immediately extends to polynomials of several variables by replacing
the ordinary derivative with a partial derivative.

The existence of an appropriately analogous inequality for
$c=a+b$ with $a,$ $b,$
and $c$ relatively prime integers is the famous ABC Conjecture
of Masser and Oesterl\'e (see \cite{Oesterle}), which states that
for each $\varepsilon>0,$ there exists a constant $C(\varepsilon)$
such that
$$
    \max\{|a|,|b|,|c|\} \le C(\varepsilon)S(abc)^{1+\varepsilon},
$$
where we use $S(abc)$ to denote the square free part of $abc.$ The
ABC conjecture for integers has spectacular consequences in number
theory -- see \textit{e.g.,} \cite{Goldfeld} and \cite{Lang}. To
date, in the case of integers, the best proven upper bounds on
$\max\{|a|,|b|,|c|\}$ in terms of $S(abc)$ are super-polynomial in
$S(abc)$ -- see \textit{e.g.,} \cite{StewartYu}.

The ABC theorem for polynomials
has been generalized in a variety of directions,
including: to sums in one-dimensional function fields by
Mason \cite{MasonNormForm}, by Voloch \cite{Voloch}
and by Brownawell and Masser \cite{BrownawellMasser}, to sums of
pairwise relatively prime polynomials
of several variables by Shapiro and Sparer \cite{ShapiroSparer},
to sums in higher-dimensional function fields by Hsia and Wang
\cite{HsiaWang}, and to quantum deformations of polynomials
by Vaserstein \cite{Vaserstein}. Motivated by the analogy between
Diophantine approximation and Nevanlinna theory \cite{Vojta},
the ABC theorem has also been proven for complex
entire functions by Van Frankenhuysen \cite{vanFrankenhuysenThesis},
\cite{vanFrankenhuysenPreprint} and for $p$-adic entire functions
by Hu and Yang \cite{HuYangABC}.

In a recent article,
An and Manh \cite{AnManh} gave an ABC type theorem for $p$-adic
entire functions of several variables $c=a+b,$ but under some rather
restrictive hypotheses, including the assumption that $a,$ $b,$
and $c$ have no common zeros, which in several variables is a much
stronger assumption than simply supposing that $a,$ $b,$ and $c$
are pairwise relatively prime in the ring of entire functions.
The purpose of this article is to prove general ABC theorems for sums
$$
    f_n=f_0+\dots+f_{n-1}
$$
of non-Archimedean entire functions on affine $m$-space $\mathbf{A}^m$
in arbitrary characteristic analogous to the existing theorems
for several variable polynomials.
We do this for two reasons.  First, we illustrate
that if the existing polynomial proofs for ABC theorems
are correctly interpreted, then they provide immediate proofs
for the analogous statements for non-Archimedean entire functions
without the need for any fundamentally new ideas or for additional technical
assumptions as in \cite{AnManh}, with the exception of one slight
subtlety discussed just before Corollary~\ref{abcsf}.
Second, Cherry and Ye \cite{CherryYe}
developed a several variable non-Archimedean Nevanlinna theory in a
form that was intended to be convenient to use.  However in applications,
it is often convenient to work with, or at least to think in terms of,
truncated counting functions, which
were not discussed in \cite{CherryYe}.
Thus, our second purpose is to illustrate how to define and work
with truncated counting functions in several variables and in positive
characteristic; this is well-known to the experts, but we thought it
helpful to illustrate here for the novice's benefit.

The plan of this paper is as follows. In section~\ref{prelim} we set
up some notation and recall some basic facts we will need.  We also
prove the most basic form of the ABC theorem in
section~\ref{prelim}, so readers only interested in the basic idea
do not need to read past section~\ref{prelim}. In
section~\ref{Hasse}, we recall the notion of Hasse derivative and
generalized Wronskians necessary to work in positive characteristic
and in several variables. In section~\ref{truncated}, we discuss how
to define truncated counting functions in positive characteristic.
We recall some linear algebra from Brownawell and Masser
\cite{BrownawellMasser} in section~\ref{linalg}. Finally in
section~\ref{ABC}, we derive our general ABC theorems and indicate
how various ABC theorems in the literature can be derived as
corollaries. Our method of proof is essentially that of Hu and Yang
\cite{HuYangBB}, who adapted the argument of Brownawell and Masser
\cite{BrownawellMasser} to the context of non-Archimedean entire
functions of one variable. Our presentation in section~\ref{ABC} was
also influenced by the recent work of De~Bondt \cite{deBondt}, who
formulated generalized ABC theorems for complex polynomials in
several variables from which the other various versions in the
literature can be derived.

\section{Preliminaries, Notation and Warm-up}\label{prelim}

We will find it convenient to use some Nevanlinna notation.
We will use \cite{CherryYe} as our basic reference and mostly follow
the notation there, although here we will not assume characteristic zero,
as was done in \cite{CherryYe}.

Throughout, $\mathbf{F}$ will denote an algebraically closed field
complete with respect to a non-Archimedean absolute value $|~|.$
We make no assumption about the characteristic of $\mathbf{F}.$
Let $\mathbf{F}^\times$ denote $\mathbf{F}\setminus\{0\},$
and let $|\mathbf{F}^\times|$ be the subset of the positive real numbers
defined by
$$
    |\mathbf{F}^\times| = \{ |a| : a \in \mathbf{F}^\times\}.
$$

Let $\mathbf{F}^m$ denote the $m$-th Cartesian product of $F,$ which
is the set of $\mathbf{F}$-points of affine $m$-space $\mathbf{A}^m.$
By an \textbf{entire function} on $\mathbf{A}^m$ or $\mathbf{F}^m,$
we mean a formal power series in $m$ variables with coefficients
in $\mathbf{F}$ and with infinite radius of convergence.
We will use $\mathcal{E}_m$ to denote the ring of entire functions
on $\mathbf{A}^m.$

If $z_1,\dots,z_m$ are $\mathbf{F}$-valued variables, we use $z$
to refer collectively to the $m$-tuple $(z_1,\dots,z_m).$
When convenient, we will use multi-index notation.
If $\gamma=(\gamma_1,\dots,\gamma_m)$ is a multi-index, \textit{i.e.,}
an $m$-tuple of non-negative integers, then by definition
$$
    z^\gamma=z_1^{\gamma_1}\cdots z_m^{\gamma_m},
    \qquad
    |\gamma|=\gamma_1+\dots+\gamma_m, \qquad\textnormal{and}\qquad
    \partialgamma f = \frac{\partial^{|\gamma|}f}{\partial z^\gamma}.
$$
Similarly, if $\mathbf{r}=(r_1,\dots,r_m)$ is an $m$-tuple of non-negative
real numbers, we define
$$
    \mathbf{r}^\gamma = r_1^{\gamma_1}\cdots r_m^{\gamma_m}.
$$
We can therefore write an entire function $f$ in $\mathcal{E}_m$ as
$$
    f(z)=\sum_\gamma a_\gamma z^\gamma
$$
where $a_\gamma$ are in $\mathbf{F},$ and for all $m$-tuples of
non-negative real numbers $\mathbf{r},$
$$
    \lim_{|\gamma|\to\infty}|a_\gamma|\mathbf{r}^\gamma=0.
$$

We recall that to each $m$-tuple of non-negative real numbers
$\mathbf{r}=(r_1,\dots,r_m),$ we can associate a non-Archimedean absolute
value $|~|_{\mathbf{r}}$ on the ring $\mathcal{E}_m$ by defining
$$
    |f|_{\mathbf{r}} = \sup_\gamma |a_\gamma|{\mathbf{r}}^\gamma,
$$
where as above $f$ in $\mathcal{E}_m$ is given by the power series expansion
$$
    f(z)=\sum_\gamma a_\gamma z^\gamma.
$$
The non-trivial thing that needs to be checked is that if
$f$ and $g$ are two elements of $\mathcal{E}_m,$
then $|fg|_{\mathbf{r}}=|f|_{\mathbf{r}}|g|_{\mathbf{r}}.$  When all the
$r_j=1$ this is worked out,
for instance, in \cite[\S5.1.2]{BGR}.  In general, by extending the
field $\mathbf{F}$ if necessary, we may assume
the $r_j$ are elements of $|\mathbf{F}^\times|$ and then reduce
to the case when all the $r_j$ are $1$ by an affine rescaling of
the variables.

For our purposes, we only need consider those $\mathbf{r}$ for which
all the $r_j$ are equal, or in other words $m$-tuples of the
form $\mathbf{r}=(r,\dots,r).$  We will denote the associated absolute
value on $\mathcal{E}_m$ by $|~|_r.$ Clearly,

\begin{proposition}\label{frnondec}
If $f$ is in $\mathcal{E}_m,$ then $|f|_r$
is a non-decreasing function of $r.$
\end{proposition}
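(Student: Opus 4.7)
The plan is to deduce the statement directly from the definition of $|f|_r$ as a supremum over the coefficients, using term-by-term monotonicity in $r$. There is no real obstacle here; the content is essentially that each monomial norm $|a_\gamma|r^{|\gamma|}$ is a non-decreasing function of $r$, and the supremum of non-decreasing functions is non-decreasing.

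More explicitly, I would fix $0 \le r \le s$ and write the power series expansion $f(z)=\sum_\gamma a_\gamma z^\gamma$. With $\mathbf{r}=(r,\dots,r)$ we have $\mathbf{r}^\gamma = r^{|\gamma|}$, so
$$
    |f|_r = \sup_\gamma |a_\gamma|\, r^{|\gamma|}.
$$
Since $|\gamma|$ is a non-negative integer and $0\le r\le s$, one has $r^{|\gamma|}\le s^{|\gamma|}$, and therefore
$$
    |a_\gamma|\, r^{|\gamma|} \;\le\; |a_\gamma|\, s^{|\gamma|}
$$
for every multi-index $\gamma$. Taking the supremum over $\gamma$ on both sides yields $|f|_r \le |f|_s$, which is the desired monotonicity.

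The only point deserving a brief remark is that both suprema are finite and indeed attained: the entireness hypothesis $\lim_{|\gamma|\to\infty}|a_\gamma|\mathbf{r}^\gamma = 0$ (applied to the $m$-tuple $(s,\dots,s)$) guarantees that $|f|_s$, and a fortiori $|f|_r$, is a maximum rather than merely a supremum, so the termwise comparison is unambiguous. No deeper input, such as the multiplicativity of $|\,\cdot\,|_\mathbf{r}$ discussed just before the statement, is needed for this proposition.
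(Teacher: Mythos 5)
Your proof is correct and is exactly the termwise-monotonicity argument the paper has in mind when it dismisses this proposition with the single word ``Clearly'' (the paper offers no written proof at all). The remark about the supremum being attained is harmless but unnecessary, since taking suprema of the termwise inequalities works whether or not they are attained.
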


If $f$ happens to be a polynomial of degree $d,$ then we easily see
that as $r\to\infty,$
$$
    \log|f|_r = d\log r + O(1).
$$
Thus, in our ABC theorems for entire functions, $\log|f|_r$
will play the roll played by the degree in the case of polynomials
in the left-hand side of the inequalities.

We will make use of the following observation on several occasions:

\begin{corollary}\label{factorsmaller}
If $f,$ $g$ and $h$ are elements of $\mathcal{E}_m$ such that
$f=gh$ and if $r_0>0,$ then
for all $r\ge r_0,$
$$
    \log|g|_r\le\log|f|_r+O(1).
$$
\end{corollary}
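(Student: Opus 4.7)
The plan rests on two facts already at hand: the norm $|~|_r$ is multiplicative on $\mathcal{E}_m$, as noted in the text just before Proposition~\ref{frnondec}, and $|h|_r$ is a non-decreasing function of $r$ by Proposition~\ref{frnondec} itself. Assuming the nontrivial case where $f$, $g$, and $h$ are not identically zero, multiplicativity gives $|f|_r = |g|_r |h|_r$ with all three quantities positive for every $r > 0$, so taking logarithms yields
$$
    \log|g|_r = \log|f|_r - \log|h|_r.
$$
The task therefore reduces to showing that $\log|h|_r$ is bounded below on $[r_0, \infty)$ by a finite constant depending only on $h$ and $r_0$.

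For this lower bound I would write $h(z) = \sum_\gamma b_\gamma z^\gamma$ and pick any one nonzero coefficient $b_{\gamma_0}$; such a coefficient exists because $h \not\equiv 0$. Since $r_0 > 0$, the single-term contribution $|b_{\gamma_0}| r_0^{|\gamma_0|}$ is strictly positive, so from the definition of $|~|_{\mathbf{r}}$ with $\mathbf{r} = (r_0, \dots, r_0)$ we obtain $|h|_{r_0} \ge |b_{\gamma_0}| r_0^{|\gamma_0|} > 0$. Thus $\log|h|_{r_0}$ is a finite real number. Applying Proposition~\ref{frnondec} then gives $\log|h|_r \ge \log|h|_{r_0}$ for every $r \ge r_0$, and substituting back produces
$$
    \log|g|_r \le \log|f|_r - \log|h|_{r_0},
$$
where the constant $-\log|h|_{r_0}$ is the $O(1)$ term.

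There is no substantive obstacle here; the mild subtlety worth flagging is why the hypothesis $r_0 > 0$ is essential. At $r = 0$ the norm $|h|_0$ reduces to the absolute value of the constant term of $h$, which can vanish even when $h \not\equiv 0$, and then no uniform additive constant bounds $-\log|h|_r$ near the origin. Requiring $r \ge r_0 > 0$ rules this out and lets the monotonicity of $|h|_r$ do all the work.
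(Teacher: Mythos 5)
Your proof is correct and follows essentially the same route as the paper: multiplicativity of $|~|_r$ combined with the monotonicity from Proposition~\ref{frnondec} to bound $\log|h|_r$ below by $\log|h|_{r_0}$. The extra care you take in verifying $|h|_{r_0}>0$ and explaining why $r_0>0$ is needed is a reasonable elaboration of what the paper leaves implicit.
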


\begin{proof}
By the multiplicativity of $|~|_r$ and Proposition~\ref{frnondec},
$$
    \log|f|_r=\log|g|_r+\log|h|_r \ge \log|g|_r+\log|h|_{r_0},
$$
which gives the first inequality.
\end{proof}

We recall the elementary:

\begin{lemma}[Logarithmic Derivative Lemma]\label{LDL}
Let $f$ be an entire function in $\mathcal{E}_m$ and let $\gamma$
be a multi-index. Then,
$$
    |\partialgamma f|_r \le \frac{|f|_r}{r^{|\gamma|}}.
$$
\end{lemma}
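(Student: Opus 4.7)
The plan is to prove the inequality by direct term-by-term differentiation of the power series expansion, exploiting the key non-Archimedean fact that $|n| \le 1$ for every integer $n$.

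First I would write $f(z) = \sum_\alpha a_\alpha z^\alpha$ and differentiate termwise. Since $\partialgamma z^\alpha = \frac{\alpha!}{(\alpha-\gamma)!} z^{\alpha-\gamma}$ when $\alpha \ge \gamma$ componentwise and is zero otherwise, we obtain
$$
\partialgamma f(z) = \sum_{\alpha \ge \gamma} \frac{\alpha!}{(\alpha-\gamma)!}\, a_\alpha\, z^{\alpha-\gamma}.
$$
Here $\frac{\alpha!}{(\alpha-\gamma)!}$ is a positive integer viewed as an element of $\mathbf{F}$ (and may vanish in positive characteristic, which only helps the inequality).

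Next I would apply the definition of the norm $|~|_r$ as the supremum of $|\textnormal{coefficient}|\cdot r^{|\textnormal{exponent}|}$. This gives
$$
|\partialgamma f|_r = \sup_{\alpha\ge\gamma} \left|\frac{\alpha!}{(\alpha-\gamma)!}\right| |a_\alpha|\, r^{|\alpha|-|\gamma|}.
$$
The decisive step is the non-Archimedean bound on integers: because $|1|=1$ and $|k+1| \le \max\{|k|,1\}$, induction yields $|n| \le 1$ for every rational integer $n$, hence $\left|\frac{\alpha!}{(\alpha-\gamma)!}\right| \le 1$ in $\mathbf{F}$.

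Combining these, I would factor out $r^{-|\gamma|}$ and estimate
$$
|\partialgamma f|_r \le r^{-|\gamma|} \sup_{\alpha\ge\gamma} |a_\alpha|\, r^{|\alpha|} \le \frac{|f|_r}{r^{|\gamma|}},
$$
since enlarging the index set from $\{\alpha \ge \gamma\}$ to all $\alpha$ can only increase the supremum. This concludes the proof. I do not anticipate a real obstacle here; the entire argument is essentially a formal manipulation, and the only point worth emphasizing is that the non-Archimedean nature of $|~|$ is precisely what allows one to discard the integer factors introduced by differentiation, so that no factor of $|\gamma|!$ or similar appears on the right-hand side.
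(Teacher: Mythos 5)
Your proof is correct and is exactly the argument the paper intends: differentiate the power series term by term and use that $|k|\le 1$ for every integer $k$ in a non-Archimedean field. The paper's proof is a one-line sketch of precisely this computation, which you have simply written out in full.
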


\begin{proof}
Differentiate the power series defining $f$ and use the fact that
$|k|\le1$ for any integer $k.$
\end{proof}

We need to make use of some ring-theoretic properties of
$\mathcal{E}_m.$  The reader can see \cite{CherryGCD}
for a detailed treatment.

As usual, we will call an element $P$ of $\mathcal{E}_m$
\textbf{irreducible} if whenever we write $P=fg$ with $f$ and $g$ in
$\mathcal{E}_m,$ we must have that at least one of $f$ or $g$ is a
unit in $\mathcal{E}_m.$ As is well-known, the only units in
$\mathcal{E}_m$ are the nonzero constant functions -- see,
\textit{e.g.,} \cite[Cor.~2.4]{CherryYe}.

\begin{proposition}\label{derivirred}
Let $P$ be an irreducible element of $\mathcal{E}_m$ and let $j$
be an integer between $1$ and $m.$  If $P$ divides
$\partial P/\partial z_j,$ then $\partial P/\partial z_j\equiv0.$
\end{proposition}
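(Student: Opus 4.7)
The plan is to suppose $\partial P/\partial z_j = P\cdot Q$ for some $Q\in\mathcal{E}_m$ and deduce directly that $Q=0$. I would not use irreducibility at all in the argument; the hypothesis ``$P$ divides $\partial P/\partial z_j$'' together with $P\not\equiv 0$ already forces the conclusion.

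First I would apply the Logarithmic Derivative Lemma (Lemma~\ref{LDL}) with the multi-index $\gamma$ that is $1$ in the $j$-th slot and $0$ elsewhere, giving
$$
    |\partial P/\partial z_j|_r \;\le\; \frac{|P|_r}{r}
$$
for every $r>0$. Next I would use the multiplicativity of $|\,\cdot\,|_r$ on $\mathcal{E}_m$ (recalled just before Proposition~\ref{frnondec}) to rewrite the left-hand side as $|P|_r\,|Q|_r$. Since $\mathcal{E}_m$ is an integral domain and $P\not\equiv 0$, one has $|P|_r\neq 0$ for all $r>0$, so I may divide to conclude
$$
    |Q|_r \;\le\; \frac{1}{r}\qquad\text{for all } r>0.
$$

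Finally I would invoke Proposition~\ref{frnondec}: the function $r\mapsto |Q|_r$ is non-decreasing. Hence for any fixed $r_0>0$ and any $r\ge r_0$,
$$
    |Q|_{r_0}\;\le\;|Q|_r\;\le\;\frac{1}{r}.
$$
Letting $r\to\infty$ yields $|Q|_{r_0}=0$, and since $r_0>0$ was arbitrary, $Q\equiv 0$. Therefore $\partial P/\partial z_j = P\cdot Q \equiv 0$, as required.

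There is no real obstacle here; the only point to watch is that the argument relies on comparing the growth of $|P|_r$ against that of $|\partial P/\partial z_j|_r$, which the Logarithmic Derivative Lemma supplies with the crucial extra factor of $r^{-1}$. This factor, together with monotonicity, is what drives $Q$ to zero. Irreducibility of $P$ is a convenient framing for how this proposition will be used later but plays no role in the proof itself.
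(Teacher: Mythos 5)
Your argument is correct and is essentially identical to the paper's own proof: both apply the Logarithmic Derivative Lemma to get $|P|_r|Q|_r\le |P|_r/r$, cancel $|P|_r$, and then use the monotonicity of $r\mapsto|Q|_r$ from Proposition~\ref{frnondec} to force $Q\equiv0$. Your observation that irreducibility is not needed in the proof also matches the paper, which likewise never invokes it.
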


\begin{proof}
Suppose
$$
    \frac{\partial P}{\partial z_j}=Pg
$$
for some $g$ in $\mathcal{E}_m.$  Then,
$$
    \left|\frac{\partial P}{\partial z_j}\right|_r=|P|_r|g|_r.
$$
From Lemma~\ref{LDL}, $|g|_r\le 1/r,$ and so by Proposition~\ref{frnondec},
$|g|_r\equiv0.$
\end{proof}

Suppose $f$ and $g$ are non-constant
elements of $\mathcal{E}_m$ such that $g$ divides $f$ in $\mathcal{E}_m.$
Then $f$ and $g$ can also be considered as analytic
functions on the ``closed'' ball of radius $r,$ \textit{i.e.,}
$$
    \mathbf{B}^m(r)=\{(z_1,\dots,z_m)\in\mathbf{F}^m : \max |z_j|\le r\}.
$$
For large $r,$
$g$ will have zeros inside the ball (again see \cite[Cor.~2.4]{CherryYe}),
and hence will not be a unit in the ring of analytic functions
on the ball $\mathbf{B}^m(r).$
These rings are Tate algebras when $r\in|\mathbf{F}^\times|,$
and hence factorial
\cite[\S5.2.6, Th.~1]{BGR}. Thus,
$\mathcal{E}_m$ is a subring of a factorial ring in which
$g$ is not a unit, and hence some power of $g$ will not divide $f$
in $\mathcal{E}_m.$  Therefore, one can speak of the multiplicity with
which an entire function $g$ divides another entire function $f.$
Although $\mathcal{E}_m$ itself is not factorial,
the notion of ``greatest common divisor'' does make sense
in $\mathcal{E}_m;$
see \cite{CherryGCD} or the appendix of \cite{CherryYe}.
Of course, greatest common
divisors are only defined up to units, hence
multiplicative constants.
Given two entire functions $f_1$ and $f_2,$
when we write something like $g=\gcd(f_1,f_2),$ we mean pick any function
$g$ which is a greatest common divisor of $f_1$ and $f_2.$ Hence
$g$ is only well-defined up to a choice of multiplicative constant.

Since we have greatest common divisors, we can define
a good notion of the ``radical'' or the ``square free part''
of an entire function, at least in characteristic zero.
The definition we give here will not be the square free part
in positive characteristic, but will be the suitable thing to put
on the right hand side in our basic ABC theorem.  We will discuss
the existence of the square free part of an analytic function in positive
characteristic in a later section.

\begin{proposition}\label{radical}
Let $f$ be an entire function in $\mathcal{E}_m.$
For each $j$ from $1$ to $m,$ denote by
$$
    g_j = \gcd\left(f,\frac{\partial f}{\partial z_j}\right)
    \qquad\textnormal{and}\qquad
    h_j = \frac{f}{g_j}.
$$
Let $R(f)$ be the least common multiple of the $h_j.$
Then,
\begin{enumerate}
\item[(i)] $R(f)$ divides $f;$
\item[(ii)] for any non-constant $g$ in $\mathcal{E}_m,$
$g^2$ does not divide $R(f);$ and
\item[(iii)] if $P$ is an irreducible element
of $\mathcal{E}_m$ that divides $f,$ then $P$ also divides $R(f)$
if and only if the multiplicity to which $P$ divides $f$ is
not divisible by the characteristic of $\mathbf{F}.$
\end{enumerate}
\end{proposition}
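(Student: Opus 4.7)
The plan is to reduce everything to a computation of multiplicities at irreducibles. Since $\mathcal{E}_m$ admits greatest common divisors, for each irreducible $P$ and nonzero $f$ the multiplicity $\mu_P(f)$ (the largest $n$ with $P^n \mid f$) is well-defined by the discussion preceding the proposition, and one has $\mu_P(\gcd(a,b)) = \min(\mu_P(a),\mu_P(b))$, $\mu_P(\mathrm{lcm}(a,b)) = \max(\mu_P(a),\mu_P(b))$, and additivity on products. I will compute $\mu_P(R(f))$ directly and read off all three assertions. Statement (i) is immediate: $g_j \mid f$ forces $h_j = f/g_j$ to divide $f$, so $f$ is a common multiple of $h_1,\dots,h_m$, whence $R(f) = \mathrm{lcm}(h_j)$ divides $f$.

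The core computation is as follows. Fix an irreducible $P$, set $n = \mu_P(f)$, and write $f = P^n q$ with $P \nmid q$. The product rule gives
$$
  \frac{\partial f}{\partial z_j}
  = P^{n-1}\!\left[\, n\,\frac{\partial P}{\partial z_j}\, q + P\,\frac{\partial q}{\partial z_j}\,\right],
$$
so $\mu_P(\partial f/\partial z_j) \ge n-1$, with equality holding precisely when $P$ fails to divide $n\,(\partial P/\partial z_j)\,q$, i.e., when $n \not\equiv 0$ in $\mathbf{F}$ and $P \nmid \partial P/\partial z_j$. Since $\mu_P(g_j) = \min(n,\mu_P(\partial f/\partial z_j))$, this forces $\mu_P(h_j) = n - \mu_P(g_j) \in \{0,1\}$, the value $1$ occurring exactly in the equality case.

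Property (ii) then drops out of the bound above: $\mu_P(R(f)) = \max_j \mu_P(h_j) \le 1$ for every irreducible $P$, so no $P^2$, and hence no $g^2$ for non-constant $g$, divides $R(f)$. For (iii), $P \mid R(f)$ iff some $\mu_P(h_j) = 1$, which by the analysis above is equivalent to $n \not\equiv 0$ in $\mathbf{F}$ together with $P \nmid \partial P/\partial z_j$ for some $j$; Proposition~\ref{derivirred} replaces the latter condition with $\partial P/\partial z_j \not\equiv 0$ for some $j$.

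The one place I anticipate actual work -- the main obstacle -- is showing that no irreducible $P$ has all its first partials identically zero, since this is what allows me to suppress the second condition and land on the clean statement in (iii). In characteristic zero it is trivial. In characteristic $p$, if every $\partial P/\partial z_j$ vanishes, then in $P = \sum a_\gamma z^\gamma$ the coefficient $a_\gamma$ is zero unless every $\gamma_i$ is divisible by $p$. Writing $a_{p\delta} = b_\delta^p$ using algebraic closure of $\mathbf{F}$, the identity $(|b_\delta|\,s^{|\delta|})^p = |a_{p\delta}|\,s^{p|\delta|}$ shows that $Q = \sum b_\delta z^\delta$ lies in $\mathcal{E}_m$, and the Frobenius gives $Q^p = P$, contradicting irreducibility. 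Granted this fact, (iii) reduces to the stated characterization: $P \mid R(f)$ iff the characteristic of $\mathbf{F}$ does not divide $\mu_P(f)$.
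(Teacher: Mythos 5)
Your proof is correct and follows essentially the same route as the paper's: the product rule applied to $f = P^n q$, Proposition~\ref{derivirred} to pass from $P \nmid \partial P/\partial z_j$ to $\partial P/\partial z_j \not\equiv 0$, and the observation that an irreducible cannot have all partial derivatives vanishing (else it would be a $p$-th power). The only organizational difference is in (ii), where the paper argues directly with an arbitrary non-constant $g$ (taking the exact power $g^s \,\|\, f$) rather than reducing to irreducibles; your reduction is fine because every non-unit of $\mathcal{E}_m$ admits an irreducible factor, a fact the paper uses elsewhere and which is part of the cited background on the ring $\mathcal{E}_m.$
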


We will call $R(f)$ as defined in Proposition~\ref{radical}
\textbf{the radical} of $f.$

\begin{proof}
For~(i), note that because each of the $h_j$ divides $f,$  it is
clear that their least common multiple $R(f)$ also divides $f.$

To show~(ii), suppose $g$ is a non-constant element of $\mathcal{E}_m$
such that $g^2$ divides $R(f).$  Then, $g^2$ must also
divide $f$ since $R(f)$ divides $f.$  Let $s\ge2$ be the largest
integer such that $g^s$ divides $f.$  Then,
$f=g^s\tilde f,$ where $\tilde f$ is an element of $\mathcal{E}_m$
not divisible by $g.$  Because
$$
    \frac{\partial f}{\partial z_j} =
    \tilde f s g^{s-1} \frac{\partial g}{\partial z_j}
    + g^s \frac{\partial \tilde f}{\partial z_j},
$$
we see that $g^{s-1}$ divides $g_j$ for all $j,$ and hence $g^2$
does not divide $h_j$ for any $j.$ Thus, $g^2$ cannot divide $R(f).$

To show~(iii), let $P$ be a non-constant irreducible element of $\mathcal{E}_m$
that divides $f.$  Let $s$ be the largest integer such that
$P^s$ divides $f.$  Then, $f=P^s\tilde f$ with $\tilde f$
relatively prime to $P.$ If $\partial P/\partial z_j\equiv0$ for all $j,$
then because we have assumed that $P$ is non-constant, it must be that
$P$ is a $p$-th power and that
$\mathbf{F}$ has positive characteristic $p.$  But then, $P$ would not
be irreducible, and so there must exist some $j$ such that
$\partial P/\partial z_j\not\equiv0.$ Because
$$
    \frac{\partial f}{\partial z_j} = sP^{s-1}
    \frac{\partial P}{\partial z_j} \tilde f + P^s\frac{\partial \tilde f}
    {\partial z_j},
$$
we conclude from Proposition~\ref{derivirred} that $P^s$
divides $\partial f/\partial z_j$ if and only if $s=0$ in $\mathbf{F}.$
\end{proof}

We will now state and prove the most
basic version of an ABC theorem for non-Archimedean entire functions
of several variables.  We feel that discussing this
basic case here will help the reader see the main ideas behind what
we will do in the sequel.

\begin{theorem}[Basic ABC Theorem]\label{basic}
Let $f_2=f_0+f_1$ be entire functions such that $f_0$ and $f_1$
are relatively prime in $\mathcal{E}_m.$  If $\mathbf{F}$ has
characteristic zero, assume that at least one of $f_0$ or $f_1$
is non-constant.  If $\mathbf{F}$ has positive characteristic $p,$
then assume that at least one of $f_0$ or $f_1$ is not a $p$-th
power in $\mathcal{E}_m.$
Let $r_0>0.$
Then, for $r\ge r_0,$
$$
    \max_{0\le i \le 2} \log |f_i|_r
    \le \log |R(f_0f_1f_2)|_r -\log r + O(1).
$$
\end{theorem}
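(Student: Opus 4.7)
The plan is to imitate the classical Mason--Stothers Wronskian argument. For each $j\in\{1,\dots,m\}$, introduce
$$W_j = f_0\,\frac{\partial f_1}{\partial z_j} - f_1\,\frac{\partial f_0}{\partial z_j},$$
which, using $f_2=f_0+f_1$, also equals $f_0\,\partial f_2/\partial z_j - f_2\,\partial f_0/\partial z_j$ and $f_2\,\partial f_1/\partial z_j - f_1\,\partial f_2/\partial z_j$. Setting $g_{i,j}=\gcd(f_i,\partial f_i/\partial z_j)$ and $h_{i,j}=f_i/g_{i,j}$, these three presentations show that each $g_{i,j}$ divides $W_j$. Because $f_0$ and $f_1$ are coprime and $f_2=f_0+f_1$, the $f_i$ are pairwise coprime, hence so are the $g_{i,j}$, so $g_{0,j}g_{1,j}g_{2,j}$ divides $W_j$.

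The first real step is to produce some $j$ with $W_j\not\equiv 0$. If every $W_j$ vanished, then $f_0\,\partial f_1/\partial z_j=f_1\,\partial f_0/\partial z_j$ combined with $\gcd(f_0,f_1)=1$ would force $f_0\mid\partial f_0/\partial z_j$; writing $\partial f_0/\partial z_j=f_0 q$, Lemma~\ref{LDL} together with Proposition~\ref{frnondec} forces $q\equiv 0$ (the same trick used in Proposition~\ref{derivirred}), so $\partial f_0/\partial z_j\equiv 0$ for every $j$, and similarly for $f_1$. In characteristic zero this makes $f_0$ and $f_1$ both constant. In positive characteristic $p$, only coefficients indexed by multi-indices divisible by $p$ can appear, and taking $p$-th roots of those coefficients (possible because $\mathbf{F}$ is algebraically closed, hence perfect) produces entire $p$-th roots of $f_0$ and $f_1$ in $\mathcal{E}_m$. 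Either conclusion contradicts the hypothesis, so some $W_j\not\equiv 0$.

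Fix such a $j$. Corollary~\ref{factorsmaller} gives $\log|g_{0,j}g_{1,j}g_{2,j}|_r\le\log|W_j|_r+O(1)$, and Lemma~\ref{LDL} combined with the ultrametric inequality bounds $\log|W_j|_r\le\log|f_a|_r+\log|f_b|_r-\log r$ for any distinct $a,b\in\{0,1,2\}$. Using the factorization $f_0 f_1 f_2=(g_{0,j}g_{1,j}g_{2,j})(h_{0,j}h_{1,j}h_{2,j})$ and choosing $\{a,b\}=\{0,1,2\}\setminus\{c\}$ yields
$$\log|f_c|_r\le\log|h_{0,j}h_{1,j}h_{2,j}|_r-\log r+O(1)$$
for each $c\in\{0,1,2\}$. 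Each $h_{i,j}$ divides $R(f_i)$ by definition, and the pairwise coprimality of the $f_i$ makes $R(f_0)R(f_1)R(f_2)$ equal to $R(f_0f_1f_2)$ up to a unit, so a final application of Corollary~\ref{factorsmaller} replaces the right-hand side by $\log|R(f_0f_1f_2)|_r+O(1)$, finishing the proof. The main subtlety lies in the nonvanishing of some $W_j$ in characteristic $p$, where one must see that the hypothesis ``not a $p$-th power in $\mathcal{E}_m$'' is precisely what prevents the Wronskians from collapsing.
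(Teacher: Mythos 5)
Your proof is correct and follows the same Wronskian (Mason--Stothers) strategy as the paper's; the only differences are organizational: you establish $W_j\not\equiv 0$ by showing that vanishing of all the $W_j$ forces every partial derivative of $f_0$ and of $f_1$ to vanish (rather than tracking a single irreducible factor $P_0$ of $f_0$ with multiplicity prime to the characteristic, as the paper does), and you handle the divisibility factor-by-factor via $g_{i,j}\mid W_j$ and $h_{i,j}\mid R(f_i)$ together with $R(f_0)R(f_1)R(f_2)\sim R(f_0f_1f_2)$, rather than via the global $\gcd(F,\partial F/\partial z_1)$ as in the paper. Both variants rest on the same irreducible-by-irreducible bookkeeping (justified by factoriality of the Tate algebras), so the arguments are essentially identical.
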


\begin{proof}
We follow the standard polynomial proof, as given for instance in
\cite{Vaserstein}, \textit{mutatis mutandis.}
Without loss of generality assume that $f_0$ is non-constant and if
$\mathbf{F}$ has positive characteristic $p$ that $f_0$ is not
a $p$-th power in $\mathcal{E}_m.$ This implies there exists a
non-constant irreducible element $P_0$ in $\mathcal{E}_m$ that divides
$f_0$ to a multiplicity $s_0$ not divisible by the characteristic of
$\mathbf{F}.$  Without loss of generality, assume that
$\partial P_0/\partial z_1\not\equiv0.$
Consider the Wronskian determinant,
$$
    W = \det \left ( \begin{array}{cc}
        f_0&f_1\\
\noalign{\vskip 3pt}
        \frac{\displaystyle\partial f_0}{\displaystyle\partial z_1}&
        \frac{\displaystyle\partial f_1}{\displaystyle\partial z_1}
        \end{array}\right)
      = \det \left ( \begin{array}{cc}
        f_0&f_2\\
\noalign{\vskip 3pt}
        \frac{\displaystyle\partial f_0}{\displaystyle\partial z_1}&
        \frac{\displaystyle\partial f_2}{\displaystyle\partial z_1}
        \end{array}\right)
      = \det \left ( \begin{array}{cc}
        f_2&f_1\\
\noalign{\vskip 3pt}
        \frac{\displaystyle\partial f_2}{\displaystyle\partial z_1}&
        \frac{\displaystyle\partial f_1}{\displaystyle\partial z_1}
        \end{array}\right),
$$
where the first equality defines $W$ in $\mathcal{E}_m$
and the second two equalities follow from $f_2=f_0+f_1.$

We first claim that $W\not\equiv0.$  Indeed, if $W\equiv0,$ then
$$
    f_0\frac{\partial f_1}{\partial z_1} =
    f_1\frac{\partial f_0}{\partial z_1}.
$$
Because $P_0^{s_0}$
divides $f_0$ and does not divide $\partial f_0/\partial z_1,$
this would imply that $P_0$ divides $f_1.$  But, $f_0$ and $f_1$ were
assumed relatively prime, and hence $W\not\equiv0.$

Let $F=f_0f_1f_2,$ $G=\gcd(F,\partial F/\partial z_1),$
and $H=F/G.$  Then, by definition $H$ divides $R(f_0f_1f_2),$
and so
$$
    \log|H|_r \le \log|R(f_0f_1f_2)|_r + O(1)
$$
for $r\ge r_0$ by Corollary~\ref{factorsmaller}. We also claim that
$G$ divides $W.$  Indeed, suppose that $P$ is an irreducible element
that divides $G.$  Then $P$ divides $F$ and so it divides one
of $f_i$ and hence exactly one of the $f_i$ since the $f_i$ are
relatively prime.  Thus, suppose that $P$ divides $f_i$ and hence $F$
with multiplicity $s.$ Then, $P^{s-1}$ divides $\partial f_i/\partial z_1$
and hence also $W.$  If $P^s$ also divides $G$ and hence
$\partial F/\partial z_1,$
then either $s$ is divisible by
the characteristic of $\mathbf{F}$ or $\partial P/\partial z_1=0.$
But in either of these cases, $P^s$ also divides $\partial f_i/\partial z_1,$
and so $P^s$ also divides $W.$  Thus, $G$ divides $W$ as claimed.
Again applying Corollary~\ref{factorsmaller}, we see that
for $r\ge r_0,$
$$
    \log|G|_r \le \log |W|_r + O(1).
$$

By Lemma~\ref{LDL},
$$
    \left|f_i\frac{\partial f_j}{\partial z_1}\right|_r \le
    \frac{|f_if_j|_r}{r},
$$
and hence using each of the three determinants defining $W,$
$$
    \log|W|_r\le\log \min\{|f_0f_1|_r,|f_0f_2|_r,|f_1f_2|_r\}-\log r.
$$
Hence,
\begin{align*}
    \log\max|f_i|_r &= \log|f_0|_r+\log|f_1|_r+\log|f_2|_r -
    \log \min_{0\le i < j\le 2} |f_if_j|_r \\
    &=\log|F|_r-\log \min_{0\le i < j\le 2} |f_if_j|_r \\
    &=\log|H|_r+\log|G|_r-\log \min_{0\le i < j\le 2} |f_if_j|_r\\
    &\le \log|R(F)|_r+\log|W|_r-\log \min_{0\le i < j\le 2} |f_if_j|_r
    +O(1)\\
    &\le \log|R(F)|_r-\log r+O(1),
\end{align*}
for $r\ge r_0.$
\end{proof}

We conclude this section with a discussion of counting functions.
For a polynomial in one variable, it is a simple matter to count the
zeros, with or without multiplicity, because they are finite in number.
For a one variable convergent power series, the zeros are discrete, so one can
create a counting function by counting them up to a certain size,
and then seeing how the number of zeros grows as the maximum size
considered is allowed to grow.  This is in complete analogy
to Nevanlinna's notion of a counting function to count the number of zeros
of a complex entire or meromorphic function.

For several variable polynomials, one generally does not try to ``count''
zeros.  Rather, one counts irreducible factors, usually weighted by the
degree of the irreducible factor.  For complex holomorphic functions
of several variables, including the case of complex polynomials, one
can define counting functions in a very geometric way by integrating
certain differential forms over the irreducible components
of the zero divisor of the function; see \textit{e.g.,}
\cite{NoguchiOchiai} or \cite{Shabat}.

One approach to defining non-Archimedean counting functions
in several variables is the approach initiated by H\`a Huy Kho\`ai
\cite{KhoaiSV} and used by Vu Hoai An and Doan Quang Manh, \textit{e.g.,}
\cite{AnSV}, \cite{AnManhCartan}, and \cite{AnManh}.  Although this approach
is, in principle, aesthetically pleasing because of its definition in terms
of the geometry of the Newton polytope associated to a several variable
power series, in practice, working with counting functions defined in this
way seems to be rather difficult, and seems not to produce
particularly aesthetic proofs. For instance, the difficulty in working
with this notion of counting function seems to have something to do with
An and Manh's need for some of their restrictive hypotheses in
\cite{AnManh}.  Moreover, working with this definition
seems to obscure connections to proofs of similar results for polynomials.
In \cite{CherryYe}, Cherry and Ye preferred not to give an
\textit{a priori} natural definition of counting function, but rather
first proved \cite[Lem.~2.3]{CherryYe}
that starting with a power series of several variables,
the counting functions of the one variable power series
obtained by restricting to a sufficiently generic line through the origin
do not depend on the generic line chosen and can be expressed in terms
of the power series coefficients.  The pay-back for doing this work first
before giving what may seem like an unnatural definition for
the counting function is that one can then in a relatively straightforward
manner connect Cherry and Ye's notion of counting function with
$|f|_r$ through a Poisson-Jensen type formula \cite[Th.~3.1]{CherryYe}.
Then, one can work with $|f|_r$ in a relatively straightforward manner
and in close analogy with how one would naturally work with a several
variable polynomial.

Suppose
$$
    f=\sum_\gamma a_\gamma z^\gamma
$$
is an entire function on $\mathbf{A}^m.$ As earlier in this section,
let $r>0$ and let
$\mathbf{r}=(r,\dots,r).$ Cherry and Ye define the \textbf{unintegrated
counting function} of zeros of $f$ by
$$
    n_f(0,r) = \sup\{|\gamma| : |a_\gamma|\mathbf{r}^\gamma=|f|_r\}.
$$
This is the number of zeros, counting multiplicity, that $f$ has
with $\max |z_j|\le r$ on a sufficiently generic line through the origin.
Also, define
$$
    n_f(0,0)=\lim_{r\to0} n_f(0,r)=\min\{|\gamma| : a_\gamma\ne0\}.
$$
As is typical in Nevanlinna theory, it is more convenient to work
with the \textbf{integrated counting function} of zeros
$$
    N_f(0,r) = n_f(0,0)\log r + \int_0^r\big(n_f(0,t)-n_f(0,0)\big)
    \frac{dt}{t}.
$$
Immediately from the definition we see that if $f$ is a non-constant
entire function, then for $r\ge1,$
\begin{equation}\label{Nlogr}
    \log r \le N_f(0,r)+O(1).
\end{equation}
Cherry and Ye's Poisson-Jensen-Green Formula \cite[Th.~3.1]{CherryYe}
then says that there exists a constant $C_f$ depending on $f$ but not $r$
such that
\begin{equation}\label{poisson}
    N_f(0,r)=\log|f|_r+C_f
\end{equation}
for all $r.$
These counting functions count zeros of $f$ with multiplicity.

The following proposition for counting functions corresponds to
Corollary~\ref{factorsmaller}.

\begin{proposition}\label{countingfactorsmaller}
Let $f=gh$ be entire functions.  Then,
\begin{enumerate}
\item[(i)] $n_f(0,r)=n_g(0,r)+n_h(0,r)$ for all $r\ge0,$
\item[(ii)] $N_f(0,r)=N_g(0,r)+N_h(0,r)$ for all $r\ge0,$ and
\item[(iii)] $N_f(0,r)\ge N_g(0,r)$ for all $r\ge1.$
\end{enumerate}
\end{proposition}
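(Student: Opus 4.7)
The plan is to prove (i) first, deduce (ii) by integration, and then derive (iii) from (ii) together with non-negativity of $N_h$ on $[1,\infty)$. The essential input throughout is multiplicativity of the Gauss norm, $|f|_r=|g|_r|h|_r$.

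For (i), introduce $\phi_f(t):=\log|f|_{e^t}$ as a function of $t=\log r$. From the power series expansion of $f$, this is the supremum over multi-indices $\gamma$ of the affine functions $t\mapsto \log|a_\gamma|+|\gamma|t$, and is therefore piecewise linear, non-decreasing, and convex in $t$. Because the entire-function condition forces $\log|a_\gamma|+|\gamma|t\to-\infty$ as $|\gamma|\to\infty$ for each fixed $t$, only finitely many multi-indices $\gamma$ can achieve values near the supremum at any given $t_0$, so the right-slope of $\phi_f$ at $t_0$ is well-defined and equals the largest $|\gamma|$ realizing the supremum; this is exactly $n_f(0,e^{t_0})$. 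The identity $\phi_f=\phi_g+\phi_h$ then gives $n_f(0,r)=n_g(0,r)+n_h(0,r)$ for $r>0$ by adding right-slopes. For $r=0$, the claim reduces to additivity of $\min\{|\gamma|:a_\gamma\neq 0\}$, which holds because the lowest-degree homogeneous parts of $g$ and $h$ multiply to a nonzero polynomial (the polynomial ring $\mathbf{F}[z_1,\dots,z_m]$ being an integral domain).

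Part (ii) then follows by substituting the identity $n_f=n_g+n_h$ into the integral defining $N$. For (iii), (ii) gives $N_f(0,r)-N_g(0,r)=N_h(0,r)$, so it suffices to show $N_h(0,r)\ge 0$ for $r\ge 1$. Since right-slopes of the convex function $\phi_h$ are non-decreasing, $n_h(0,t)\ge n_h(0,0)\ge 0$ for all $t\ge 0$, so the integrand in the definition of $N_h$ is non-negative; and $n_h(0,0)\log r\ge 0$ for $r\ge 1$.

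The only slightly technical point is the identification of $n_f(0,r)$ with the right-slope of $\phi_f$ at $t=\log r$ in the proof of (i); once this is in place, both (ii) and (iii) reduce to routine manipulations.
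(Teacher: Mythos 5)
Your proof is correct, but it reaches part~(i) by a genuinely different mechanism than the paper. The paper argues directly on coefficients: it picks the \emph{largest} maximizing multi-indices $\beta_0$ and $\gamma_0$ for $g$ and $h$ in the graded lexicographic order, checks via the ultrametric inequality that the coefficient of $z^{\beta_0+\gamma_0}$ in $f$ has $|a_{\beta_0+\gamma_0}|r^{|\beta_0+\gamma_0|}=|g|_r|h|_r=|f|_r$ (the cross terms being strictly smaller), and that no $\alpha$ with $|\alpha|>|\beta_0|+|\gamma_0|$ can achieve the supremum. You instead identify $n_f(0,r)$ with the right-slope at $t=\log r$ of the convex, piecewise-linear function $\phi_f(t)=\log|f|_{e^t}$ and add slopes using $\phi_f=\phi_g+\phi_h$. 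Your route buys two things: the monotonicity of $t\mapsto n_h(0,t)$ needed in~(iii) comes for free from convexity (the paper simply asserts $N_h(0,r)\ge0$ for $r\ge1$), and you avoid choosing a tie-breaking monomial order. What it costs is the one technical point you correctly flag: you must verify that near any $t_0$ the supremum is a maximum over finitely many affine functions, so that the right-derivative exists and equals the largest active $|\gamma|$; your justification is terse but the entireness condition $|a_\gamma|\mathbf{r}^\gamma\to0$ does deliver this. Both proofs lean on multiplicativity of the Gauss norm, and your handling of $r=0$ (lowest homogeneous parts multiply to something nonzero since $\mathbf{F}[z_1,\dots,z_m]$ is a domain) supplies the case the paper leaves to the reader.
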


\begin{proof}
The equality in~(ii) follows immediately from~(i) and the definition
of the integrated counting functions.  The inequality in~(iii) follows
from~(ii) and the fact that $N_h(0,r)\ge0$ if $r\ge1.$  Thus, we need
to show~(i). To do so, let
$$
    f(z)=\sum_\alpha a_\alpha z^\alpha, \qquad
    g(z)=\sum_\beta b_\beta z^\beta, \qquad\text{and}\qquad
    h(z)=\sum_\gamma c_\gamma z^\gamma.
$$
We leave the case $r=0$ for the reader.  Let $r>0.$
Let $\beta_0$ and $\gamma_0$ be the largest multi-indices in the
graded lexicographical ordering such that
$$
    |b_{\beta_0}|r^{|\beta_0|}=|g|_r \qquad\text{and}\qquad
    |c_{\gamma_0}|r^{|\gamma_0|}=|h|_r
$$
respectively.  By definition, $n_g(0,r)=|\beta_0|$ and
$n_h(0,r)=|\gamma_0|.$ If $|\alpha|>|\beta_0|+|\gamma_0|,$ then
$$
    |a_\alpha|r^{|\alpha|}
    \le \max_{\beta+\gamma=\alpha}|b_\beta|r^{|\beta|}
    |c_\gamma|r^{|\gamma|} < |g|_r|h|_r=|f|_r,
$$
where the second inequality follows from the fact that if
$$
    |\beta|+|\gamma|=|\alpha|>|\beta_0|+|\gamma_0|,
$$
then we must have
$$
    |\beta|>|\beta_0|\qquad\text{or}\qquad
    |\gamma|>|\gamma_0|.
$$
On the other hand, if we consider $\alpha_0=\beta_0+\gamma_0,$ then
$$
    \alpha_0=\sum_{\beta+\gamma=\alpha_0}b_\beta c_\gamma.
$$
If $\beta\ne\beta_0$ (and so $\gamma\ne\gamma_0$), then either
$\beta$ comes after $\beta_0$ or $\gamma$ comes after $\gamma_0$
in the graded lexicographical ordering, which means
$$
    |b_\beta|r^{|\beta|}|c_\gamma|r^{|\gamma|} <
    |b_{\beta_0}|r^{|\beta_0|}|c_{\gamma_0}|r^{|\gamma_0|},
$$
and so
$$
    |a_{\alpha_0}|r^{|\alpha_0|}=
    |b_{\beta_0}|r^{|\beta_0|}|c_{\gamma_0}|r^{|\gamma_0|}
    =|g|_r|h|_r=|f|_r.
$$
Thus,
$$
    n_f(0,r)=|\alpha_0|=|\beta_0|+|\gamma_0|=n_g(0,r)+n_h(0,r).\qedhere
$$
\end{proof}

In \cite{CherryYe}, Cherry and Ye did not discuss truncated counting
functions, where zeros are counted without multiplicity or with
their multiplicities ``truncated'' to a certain level.
In complex Nevanlinna theory, since one has a natural
geometric definition for counting functions defined as integrals
over irreducible components of an analytic divisor, it is straightforward
to define truncated counting functions.  Since Cherry and Ye's
definition of counting functions is given in terms of power series
coefficients, it is clear that there will be no obvious definition of
truncated counting functions in terms of the power series coefficients.
Instead, we use the nice ring theoretic properties of $\mathcal{E}_m$
discussed above and
\textit{in characteristic zero}
simply define \textbf{truncated counting functions}
by
$$
    n^{(1)}_f(0,r)=n_{R(f)}(0,r) \qquad\textnormal{and}\qquad
    N^{(1)}_f(0,r)=N_{R(f)}(0,r),
$$
where $R(f)$ denotes the radical of $f$ as discussed above.
Note that although $R(f)$ is only defined up to a multiplicative
constant, $n^{(1)}$ and $N^{(1)}$ are well-defined.  In characteristic
zero, Proposition~\ref{radical} justifies calling the counting functions
of the radical the ``truncated'' counting function for $f$ because
each irreducible factor of $f$ appears with multiplicity one in $R(f).$
In positive characteristic, $N_{R(f)}$ might be called
``overly truncated''
because it completely ignores all irreducible
factors of $f$ which appear with multiplicity divisible by the
characteristic.  We will see in section~\ref{truncated} how to
define truncated counting functions in positive characteristic that
include all irreducible factors. However, as we saw in Theorem~\ref{basic},
sometimes in positive characteristic, we can give lower bounds on
these overly truncated counting functions.

We complete this section by pointing out that
Boutabaa and Escassut \cite{BoutabaaEscassut}
were the first to work out one variable
non-Archimedean Nevanlinna theory in
positive characteristic.  Their work also highlights that in working
with Nevanlinna theory in positive characteristic, one often may ignore
zeros whose multiplicity is divisible by the characteristic.

\section{Hasse Derivatives and Generalized Wronskians}\label{Hasse}
If $\mathbf{F}$ has characteristic zero, then a formal power
series $f$ in the variables
$$
    z=(z_1,\dots,z_m)
$$
is non-constant if and only if at least one of its formal partial derivatives
$\partial f/\partial z_j$ is not identically zero.  By contrast,
if $\mathbf{F}$ has positive characteristic $p,$ then any
formal power series in $z^p=(z_1^p,\dots,z_m^p)$ is such
that all its partial derivatives $\partialgamma f$ are identically
zero for all $|\gamma|>0.$  Also, if $\mathbf{F}$ has positive characteristic
$p,$ then if $\gamma$ is a multi-index such that $\gamma_i\ge p$ for
some $i,$ then $\partialgamma f=0$ for all $f.$  Therefore, we will
introduce a modification of the standard
derivative, known as the Hasse derivative, which is more useful in
positive characteristic.

If $\alpha=(\alpha_1,\dots,\alpha_m)$ and $\beta=(\beta_1,\dots,\beta_m)$
are multi-indices, we use $\alpha+\beta$ to denote the multi-index
$$
    \alpha+\beta=(\alpha_1+\beta_1,\dots,\alpha_m+\beta_m).
$$
We say that $\alpha\ge\beta$ if
$\alpha_i\ge\beta_i$ for all $i$ from $1$ to $m.$
Note that this notion of $\ge$ is not a total ordering on the set
of multi-indices and is \textit{not}
the graded lexicographical ordering that
was used in the proof of Proposition~\ref{countingfactorsmaller}.
If $\alpha\ge\beta,$ we use $\alpha-\beta$ to denote the
multi-index
$$
    \alpha-\beta=(\alpha_1-\beta_1,\dots,\alpha_m-\beta_m).
$$
Also, if $\alpha\ge\beta,$ define the multinomial coefficient
$\binom{\alpha}{\beta}$ by
$$
\binom{\alpha}{\beta}=\binom{\alpha_1}{\beta_1}\cdots\binom{\alpha_m}{\beta_m},
$$
where the $\binom{\alpha_i}{\beta_i}$ are the standard binomial coefficients.
Given a formal power series
$$
    f = \sum_\alpha a_\alpha z^\alpha
$$
and a multi-index $\gamma,$ we
define the \textbf{Hasse derivative} of multi-index $\gamma$ of $f,$
which we will denote $\Dgamma f,$ to be the formal power series
defined by
$$
    \Dgamma f = \sum_{\alpha\ge\gamma}\binom{\alpha}{\gamma}a_\alpha
    z^{\alpha-\gamma}.
$$
Note that if $\gamma=(0,\dots,0),$ then $\Dgamma f =f$
and that if $|\gamma|=1,$ then $\Dgamma f=\partialgamma f.$
Given a $j$ from $1$ to $m$ and a positive integer $k,$ we will
use $D_j^k$ as a short-hand notation for $\Dgamma f$ where
$\gamma=(\gamma_0,\dots,\gamma_m)$ with $\gamma_j=k$ and
$\gamma_i=0$ for $i\ne j.$

Because the multinomial coefficients $\binom{\alpha}{\gamma}$
are integers and hence have non-Archimedean
absolute value at most $1,$ we see
that if \hbox{$\mathbf{r}=(r_1,\dots,r_m)$} is an $m$-tuple of
non-negative real numbers such that
$$
    \lim_{|\alpha|\to\infty} |a_\alpha|\mathbf{r}^\alpha=0,
$$
then
$$
    \lim_{|\alpha|\to\infty} \left|\binom{\alpha}{\gamma}a_\alpha\right|
    \mathbf{r}^{\alpha-\gamma} \le
    \frac{1}{\mathbf{r}^\gamma}\lim_{|\alpha|\to\infty}
    |a_\alpha|\mathbf{r}^\alpha=0,
$$
and so we see that if $f$ is in $\mathcal{E}_m,$ then $\Dgamma f$ is also
in $\mathcal{E}_m.$

Clearly,
$$
    \partialgamma f = \gamma! \Dgamma f, \qquad
    \textnormal{where~}\gamma! = \gamma_1!\cdots\gamma_m!.
$$
Thus, in characteristic zero, the Hasse derivatives are just constant
multiples of the ordinary derivatives, and so one sees immediately that
they have similar properties to those of the ordinary partial derivative.
In positive characteristic, one must check these.

\begin{proposition}\label{Hasseprops}
The Hasse derivatives satisfy the following basic properties:
\begin{enumerate}
\item[(i)] $\displaystyle \Dalpha[f+g]=\Dalpha f + \Dalpha g;$
\item[(ii)] $\displaystyle \Dalpha[fg] = \sum_{\beta+\gamma=\alpha}\Dbeta f \Dgamma g;$
\item[(iii)] $\displaystyle \Dalpha \Dbeta f = \binom{\alpha+\beta}{\beta}D^{\alpha+\beta}\!f.$
\item[(iv)] If $\mathbf{F}$ has positive characteristic $p$ and $s\ge0$
is an integer, then
$$
    D_i^{p^s}f^{p^s}= (D_i f)^{p^s}.
$$
\end{enumerate}
\end{proposition}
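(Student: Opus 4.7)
The plan is to verify each of the four properties directly from the power-series definition
$\Dgamma f = \sum_{\alpha\ge\gamma}\binom{\alpha}{\gamma}a_\alpha z^{\alpha-\gamma}$, reducing each assertion to a purely combinatorial identity on multinomial coefficients that holds componentwise (and hence reduces to the scalar binomial case via the product definition of $\binom{\alpha}{\gamma}$).

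Property (i) is immediate since $\Dalpha$ is defined by a linear operation on power series coefficients. For (ii), I would write $f=\sum_\mu a_\mu z^\mu$ and $g=\sum_\nu b_\nu z^\nu$ and compute the coefficient of a given $z^\lambda$ on each side of
$\Dalpha[fg]=\sum_{\beta+\gamma=\alpha}\Dbeta f\,\Dgamma g$. The claim reduces to the multi-index Vandermonde identity
$$
\sum_{\beta+\gamma=\alpha,\ \beta\le\mu,\ \gamma\le\nu}\binom{\mu}{\beta}\binom{\nu}{\gamma}=\binom{\mu+\nu}{\alpha},
$$
which follows componentwise from the classical scalar Vandermonde identity. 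For (iii), applying the definition twice gives
$\Dalpha\Dbeta f=\sum_{\mu\ge\alpha+\beta}\binom{\mu}{\beta}\binom{\mu-\beta}{\alpha}a_\mu z^{\mu-\alpha-\beta}$, and comparing with the target reduces to
$\binom{\mu}{\beta}\binom{\mu-\beta}{\alpha}=\binom{\alpha+\beta}{\beta}\binom{\mu}{\alpha+\beta}$, an immediate factorial computation (again componentwise).

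For (iv), the first step is the iterated Frobenius identity $f^{p^s}=\sum_\mu a_\mu^{p^s}z^{p^s\mu}$, which follows from the freshman's dream applied $s$ times. Then
$$
D_i^{p^s}f^{p^s}=\sum_{\mu_i\ge 1}\binom{p^s\mu_i}{p^s}a_\mu^{p^s}z^{p^s(\mu-e_i)},
\qquad
(D_i f)^{p^s}=\sum_{\mu_i\ge 1}\mu_i^{p^s}a_\mu^{p^s}z^{p^s(\mu-e_i)},
$$
where $e_i$ is the $i$-th standard basis multi-index. Fermat's little theorem gives $\mu_i^{p^s}\equiv\mu_i\pmod p$, so matching both sides reduces to the congruence $\binom{p^s\mu_i}{p^s}\equiv\mu_i\pmod p$. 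This is the step I expect to be the main obstacle, but it falls out cleanly from the identity $(1+x)^{p^s\mu_i}\equiv(1+x^{p^s})^{\mu_i}\pmod p$ (an iteration of the freshman's dream for polynomials over $\mathbf{F}_p$): extracting the coefficient of $x^{p^s}$ on each side yields $\binom{p^s\mu_i}{p^s}\equiv\binom{\mu_i}{1}=\mu_i\pmod p$. One could alternatively invoke Lucas's theorem, but the generating-function argument is self-contained and avoids appealing to an auxiliary result.
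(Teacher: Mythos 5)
Your proof is correct and follows essentially the same route as the paper: each property is checked by comparing power-series coefficients, with (ii) reducing to the multi-index Vandermonde identity, (iii) to the trinomial-revision identity on multinomial coefficients, and (iv) to the congruence $\binom{jp^s}{p^s}\equiv j \pmod p$. The only difference is cosmetic: the paper cites Lucas's Theorem for that last congruence, whereas you derive it directly from $(1+x)^{p^s j}\equiv(1+x^{p^s})^{j}\pmod p$, which is a perfectly good self-contained alternative.
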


\begin{proof}
Property~(i) is obvious.  To check property~(ii), write out both sides
and compare like powers of $z.$ What is needed for equality is that
for multi-indices $\delta$ and $\epsilon$ with $\delta\ge\beta$
and $\epsilon\ge\gamma,$ one has
$$
    \sum_{\beta+\gamma=\alpha}\binom{\delta}{\beta}\binom{\epsilon}{\gamma}
    =\binom{\delta+\epsilon}{\alpha},
$$
which is nothing other than Vandermonde's Identity.
To check property~(iii), one needs the elementary identity that
for $\gamma\ge\alpha+\beta,$
$$
    \binom{\alpha+\beta}{\beta}\binom{\gamma}{\alpha+\beta}
    =\binom{\gamma}{\alpha}\binom{\gamma-\alpha}{\beta}.
$$
What one needs for~(iv) is that fact that for any integer $j,$
$$
    \binom{jp^s}{p^s}\equiv j \textrm{~mod~}p,
$$
which follows immediately from Lucas's Theorem.
\end{proof}

We also want to point out that as with ordinary partial derivatives,
the same proof as in Lemma~\ref{LDL} gives

\begin{lemma}[Logarithmic Derivative Lemma]\label{HasseLDL}
Let $f$ be an entire function in $\mathcal{E}_m$ and let $\gamma$
be a multi-index. Then,
$$
    |\Dgamma f|_r \le \frac{|f|_r}{r^{|\gamma|}}.
$$
\end{lemma}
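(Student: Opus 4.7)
The plan is to mimic the proof of Lemma~\ref{LDL} exactly, using the integrality of the multinomial coefficients appearing in the Hasse derivative in place of the ordinary factorials appearing in $\partial^\gamma f$. The key point is that the non-Archimedean absolute value of any integer is at most $1$, so multiplying a coefficient by an integer can only shrink or preserve its absolute value.

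Concretely, I would write $f = \sum_\alpha a_\alpha z^\alpha$, and then by the definition of the Hasse derivative,
$$
D^\gamma f = \sum_{\alpha \ge \gamma} \binom{\alpha}{\gamma} a_\alpha z^{\alpha - \gamma}.
$$
By the definition of $|\cdot|_r$ applied to this power series expansion,
$$
|D^\gamma f|_r = \sup_{\alpha \ge \gamma} \left| \binom{\alpha}{\gamma} \right| |a_\alpha| \, r^{|\alpha| - |\gamma|}.
$$
Since $\binom{\alpha}{\gamma}$ is an integer, the non-Archimedean hypothesis gives $|\binom{\alpha}{\gamma}| \le 1$, so each term is bounded by $|a_\alpha| r^{|\alpha|}/r^{|\gamma|} \le |f|_r / r^{|\gamma|}$. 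Taking the supremum over $\alpha \ge \gamma$ yields the claimed inequality.

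There is essentially no obstacle: the argument is a one-line computation once one observes the bound on multinomial coefficients. The only subtlety worth flagging is that one should interpret the case $r = 0$ separately (or restrict to $r > 0$, which is the relevant range for the applications), since otherwise the right-hand side is undefined when $|\gamma| > 0$; but for $r > 0$ the argument above goes through verbatim, exactly parallel to Lemma~\ref{LDL}.
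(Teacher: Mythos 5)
Your proof is correct and is exactly the argument the paper intends: the paper simply says the same proof as Lemma~\ref{LDL} applies, namely differentiating the power series and using that the (multinomial) coefficients are integers, hence of non-Archimedean absolute value at most $1$. Your remark about $r>0$ is a reasonable minor caveat but nothing more is needed.
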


\begin{corollary}\label{fnotdivideDf}
Let $f$ be an entire function in $\mathcal{E}_m$ and let
$\gamma$ be a multi-index with $|\gamma|>0.$ If $f$ divides
$\Dgamma f,$ then $\Dgamma f\equiv0.$
\end{corollary}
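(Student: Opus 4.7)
The statement is a direct Hasse analogue of Proposition~\ref{derivirred}, and I expect the same short argument to carry over essentially verbatim, with Lemma~\ref{HasseLDL} playing the role that Lemma~\ref{LDL} played in the earlier proposition. The only new ingredient needed beyond that proof is the observation that the Hasse logarithmic derivative bound has the full factor $r^{|\gamma|}$ in the denominator (not just $r$), which is exactly what we need when $|\gamma| > 0$.

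The plan is as follows. The case $f \equiv 0$ is trivial, so assume $f \not\equiv 0$. Since $f$ divides $\Dgamma f$, write $\Dgamma f = f g$ for some $g \in \mathcal{E}_m$. Apply the absolute value $|~|_r$ on both sides and use its multiplicativity to get
$$
   |f|_r |g|_r = |\Dgamma f|_r.
$$
Now apply Lemma~\ref{HasseLDL} to the right-hand side to obtain $|\Dgamma f|_r \le |f|_r / r^{|\gamma|}$. Since $f \not\equiv 0$ we have $|f|_r > 0$ for all $r > 0$, so we can divide through and conclude that
$$
   |g|_r \le \frac{1}{r^{|\gamma|}}
$$
for every $r > 0$.

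The final step exploits the tension between this upper bound and the monotonicity of $|g|_r$. By Proposition~\ref{frnondec}, $|g|_r$ is non-decreasing in $r$, so for any fixed $r_1 > 0$ and any $r_2 \ge r_1$ we have $|g|_{r_1} \le |g|_{r_2} \le 1/r_2^{|\gamma|}$. Letting $r_2 \to \infty$, and using that $|\gamma| > 0$, the right-hand side tends to $0$, forcing $|g|_{r_1} = 0$. Since $r_1$ was arbitrary, $g \equiv 0$, and therefore $\Dgamma f \equiv 0$.

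There is no real obstacle here: the argument is essentially a copy of the proof of Proposition~\ref{derivirred}, and the only point worth flagging is why $|\gamma| > 0$ is used — it is precisely what makes $1/r^{|\gamma|} \to 0$ as $r \to \infty$, which is the step that collapses $g$ to zero against its own monotonicity.
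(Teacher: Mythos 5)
Your proof is correct and matches the paper's intent: the paper disposes of this corollary in one line by citing Lemma~\ref{HasseLDL} together with Corollary~\ref{factorsmaller}, and your argument (multiplicativity of $|~|_r$, the Hasse logarithmic derivative bound, and monotonicity of $|g|_r$ forcing $g\equiv 0$) is exactly the mechanism underlying that citation, being a verbatim adaptation of the proof of Proposition~\ref{derivirred}. No gaps.
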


\begin{proof}
This follows from Lemma~\ref{HasseLDL} and Corollary~\ref{factorsmaller}.
\end{proof}

We will denote the fraction field of $\mathcal{E}_m$ by $\mathcal{M}_m$
and call it the field of \textbf{meromorphic functions} on
$\mathbf{A}^m.$ One sees immediately that one can use
Proposition~\ref{Hasseprops}~(ii) to inductively
extend the Hasse derivatives
to the field $\mathcal{M}_m$ and that the four properties
of Proposition~\ref{Hasseprops} continue to hold for functions
in $\mathcal{M}_m.$

For each integer $k\ge2,$ let
$$
    \mathcal{M}_m[k] = \{Q \in \mathcal{M}_m : D_j^i Q \equiv 0
    \text{~for~all~}0<i < k \text{~and~}1\le j \le m\}.
$$
If $\mathbf{F}$ has positive characteristic $p$ and if $s$ is a
positive integer, let
$$
    \mathcal{E}_m[p^s] = \{g^{p^s} : g \in \mathcal{E}_m\}.
$$
Note that $\mathcal{E}_m[p^s]$ is a subring of $\mathcal{E}_m$
and that it consists of those elements $f$ in $\mathcal{E}_m$
that can be written as convergent power series
in
$$
    z^{p^s}=(z_1^{p^s},\dots,z_m^{p^s}).
$$

\begin{proposition}\label{pthpower}
We have the following, depending on the characteristic of $\mathbf{F}$:
\begin{enumerate}
\item[(A)]If $\mathbf{F}$ has characteristic $0,$ then for all $k\ge2,$
we have $\mathcal{M}_m[k]=\mathbf{F}.$
\item[(B)] If $\mathbf{F}$ has positive characteristic $p$
and if $s$ is an integer $\ge1,$
then
\begin{enumerate}
\item[(B1)]$\mathcal{M}_m[p^{s-1}+1]=\mathcal{M}_m[p^s]$
\item[(B2)]$\mathcal{M}_m[p^s]$ is the fraction field
of $\mathcal{E}_m[p^s]$ and $D_i^{p^s}$ for $i=1,\dots,m$
are derivations on $\mathcal{M}_m[p^s].$
\end{enumerate}
\end{enumerate}
\end{proposition}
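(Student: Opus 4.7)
The plan is to handle part (A) and the substantive direction of (B2) via coprimality combined with Corollary \ref{fnotdivideDf}, to derive (B1) from a single Lucas-theorem computation using Proposition \ref{Hasseprops}(iii), and to read off the derivation property of (B2) from the Leibniz rule of Proposition \ref{Hasseprops}(ii). For (A), given $Q \in \mathcal{M}_m[k]$ with $k \geq 2$, the hypothesis gives $\partial Q/\partial z_j \equiv 0$ for every $j$. Writing $Q = a/b$ with $a, b \in \mathcal{E}_m$ coprime, clearing denominators yields $b\,\partial a/\partial z_j = a\,\partial b/\partial z_j$, so $a \mid \partial a/\partial z_j$ by coprimality, and Corollary \ref{fnotdivideDf} forces $\partial a/\partial z_j \equiv 0$ (and likewise for $b$). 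In characteristic zero, the identity $\partial(\sum_\alpha a_\alpha z^\alpha)/\partial z_j = \sum_\alpha \alpha_j a_\alpha z^{\alpha-e_j}$ vanishes only if $a_\alpha = 0$ for $\alpha \neq 0$, so $a, b \in \mathbf{F}$ and hence $Q \in \mathbf{F}$.

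For (B1), only the inclusion $\mathcal{M}_m[p^{s-1}+1] \subseteq \mathcal{M}_m[p^s]$ needs attention. Given $c$ with $p^{s-1} < c < p^s$, apply $D_j^{c-p^{s-1}}$ to $D_j^{p^{s-1}} Q \equiv 0$; by Proposition \ref{Hasseprops}(iii) this yields $\binom{c}{p^{s-1}} D_j^c Q \equiv 0$. By Lucas's theorem, $\binom{c}{p^{s-1}}$ reduces mod $p$ to the $(s-1)$-st base-$p$ digit of $c$, which is nonzero whenever $p^{s-1} \leq c < p^s$, so $D_j^c Q \equiv 0$.

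The core of (B2) is the identification $\mathcal{M}_m[p^s] = \mathrm{Frac}\,\mathcal{E}_m[p^s]$. The easy direction uses Lucas on $\binom{p^s \alpha_i}{k}$ to show $D_i^k F \equiv 0$ for $0 < k < p^s$ whenever $F \in \mathcal{E}_m[p^s]$; then for $Q = F/G$, writing $F = QG$ and expanding via Leibniz leaves only the term $D_i^k Q \cdot G$, so $D_i^k Q \equiv 0$. For the harder direction, take $Q \in \mathcal{M}_m[p^s]$, write $Q = a/b$ coprime, and for each $0 < k < p^s$ expand
$$
    D_i^k a \;=\; D_i^k(Qb) \;=\; \sum_{j+\ell=k} D_i^j Q \cdot D_i^\ell b.
$$
Every term with $0 < j \le k$ vanishes by hypothesis on $Q$, leaving $b D_i^k a = a D_i^k b$; coprimality gives $a \mid D_i^k a$, whence Corollary \ref{fnotdivideDf} yields $D_i^k a \equiv 0$ and then $D_i^k b \equiv 0$. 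Specializing $k = p^j$ for $0 \le j < s$ and using $\binom{\alpha_i}{p^j} \equiv (\alpha_i)_j \pmod p$, we conclude that $a_\alpha \ne 0$ can hold only when every base-$p$ digit of each $\alpha_i$ below position $s$ is zero, i.e., $p^s \mid \alpha_i$ for all $i$; hence $a \in \mathcal{E}_m[p^s]$ and likewise $b$.

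The derivation property now follows at once: for $f, g \in \mathcal{M}_m[p^s]$ the Leibniz expansion of $D_i^{p^s}(fg)$ retains only its two extreme terms, producing $D_i^{p^s}(fg) = f D_i^{p^s} g + g D_i^{p^s} f$; closure of $\mathcal{M}_m[p^s]$ under $D_i^{p^s}$ is read off from the just-established identification by writing $f = F(z^{p^s})/G(z^{p^s})$ and invoking Proposition \ref{Hasseprops}(iv). The main obstacle is the harder direction of (B2): one must simultaneously arrange the Leibniz collapse so as to isolate $D_i^k a$, invoke coprimality plus Corollary \ref{fnotdivideDf} to force its vanishing, and then choose the correct probe exponents $k = p^j$ to convert this family of vanishings into the structural conclusion $a \in \mathcal{E}_m[p^s]$. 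Once that alignment is in place, the rest of the proposition unwinds routinely from Leibniz and Lucas.
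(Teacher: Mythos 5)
Your proof is correct, and for the substantive part (B2) it takes a genuinely different route from the paper. The paper proves the hard inclusion $\mathcal{M}_m[p^s]\subseteq\mathrm{Frac}\,\mathcal{E}_m[p^s]$ by induction on $s$: at each stage it writes $Q=f/g$ in lowest terms and argues that if $f\notin\mathcal{E}_m[p^{t+1}]$ there is an irreducible $P$ dividing $f$ to multiplicity not divisible by $p^{t+1}$ with $D_i^{p^t}P\not\equiv0$, forcing $P$ to divide $g$ and contradicting coprimality; this leans on the local factorization theory of $\mathcal{E}_m$ (irreducible factors and their multiplicities) and on the already-established identification at level $p^t$. You instead handle all $k<p^s$ at once: the Leibniz collapse gives $b\,D_i^k a=a\,D_i^k b$, Euclid's lemma in the GCD domain $\mathcal{E}_m$ plus Corollary~\ref{fnotdivideDf} kills $D_i^k a$ and $D_i^k b$ outright, and then probing with $k=p^j$ and Lucas's theorem reads the conclusion $p^s\mid\alpha_i$ directly off the power-series coefficients. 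This avoids both the induction on $s$ and any appeal to irreducible factorizations (a delicate point, since $\mathcal{E}_m$ is not factorial), at the modest cost of invoking Euclid's lemma for GCD domains, which the paper's ring-theoretic background supplies. Your treatments of (A) and (B1) are fleshed-out versions of what the paper dismisses as clear or as an immediate consequence of Proposition~\ref{Hasseprops}(iii), and your Lucas computation confirming $\binom{c}{p^{s-1}}\not\equiv0\pmod p$ for $p^{s-1}\le c<p^s$ is exactly the needed detail.
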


\begin{proof}
Clearly, we have~(A). Proposition~\ref{Hasseprops}~(iii)
implies~(B1).
We show~(B2) by induction on $s.$
Let $Q$ be an element of  $\mathcal{M}_m[2]=\mathcal{M}_m[p].$
Then $\partial Q/\partial z_i\equiv0$ for all $i=1,\dots,m.$
Write $Q=f/g$ with $f$ and $g$ relatively prime in $\mathcal{E}_m.$
Suppose that $f$ is not in $\mathcal{E}_m[p].$  Then, there is
an irreducible element $P$ of $\mathcal{E}_m$ such that $P$ divides $f$
to a multiplicity not divisible by $p$ and such that
$\partial P/\partial z_i\not\equiv0$ for some $i.$
This implies that $P$ divides $f$ to a higher power than it divides
$\partial f/\partial z_i.$  Because $\partial Q/\partial z_i\equiv0,$
this would then imply $P$ must divide $g,$ contradicting the fact
that $f$ and $g$ are relatively prime.  Hence $f$ must have been
in $\mathcal{E}_m[p].$ Similarly, $g$ must be in $\mathcal{E}_m[p].$
That $D_i^p$ is a derivation
on $\mathcal{M}_m[p]$ then follows from
Proposition~\ref{Hasseprops}~(ii) or~(iv).
By~(B1), we have that
$$
    \mathcal{M}_m[p^{s+1}]=\{Q \in \mathcal{M}_m[p^s] :
    D_i^{p^s}Q=0 \textnormal{~for~all~}i=1,\dots,m\},
$$
and so the proof is completed by induction. Indeed, writing
an element $Q$ of \hbox{$\mathcal{M}_m[p^{s+1}]\subset\mathcal{M}_m[p^s]$}
as $f/g$ with $f$
and $g$ relatively prime elements of $\mathcal{E}_m[p^s]$
arguing as before using $D_i^{p^s}$ in place of the first partials,
we see that every irreducible element $P$ of $\mathcal{E}_m$ that divides
either $f$ or $g$ must divide them with multiplicity divisible
by $p^{s+1},$ and hence $f$ and $g$ must be in $\mathcal{E}_m[p^{s+1}].$
\end{proof}

\begin{theorem}[{Hsia-Wang \cite[Lem.~2]{HsiaWang}}]\label{gwronsk}
Let $\mathbf{F}$ have characteristic zero (resp.\ positive
characteristic $p$)
and let $s\ge1$
be an integer.
Let $f=(f_0,\dots,f_{n-1})$ be an $n$-tuple of
entire functions. For a multi-index $\gamma,$
let
$$
    \Dgamma f = (\Dgamma f_0,\dots,\Dgamma f_{n-1}).
$$
Let $\gamma^0$ be the multi-index $(0,\dots,0).$
If $f_0,\dots,f_{n-1}$ are linearly independent over
$\mathbf{F}$ (resp.\ $\mathcal{M}_m[p^s],$) then there exist
multi-indices $\gamma^1,\dots,\gamma^{n-1}$ such that
$$
    |\gamma^i|\le |\gamma^{i-1}|+1 \qquad
    \text{(resp.~} |\gamma^i|\le |\gamma^{i-1}|+p^{s-1}\text{)}
$$
and such that
$$
    \det \left ( \begin{array}{ccc} f_0 \;&\; \dots \;&\; f_{n-1} \\
    D^{\gamma^1}\!f_0 \;&\; \dots \;&\;D^{\gamma^1}\!f_{n-1} \\
    D^{\gamma^2}\!f_0 \;&\; \dots \;&\;D^{\gamma^2}\!f_{n-1} \\
    \vdots\;&\;\vdots\;&\;\vdots \\
    D^{\gamma^{n-1}}\!f_0 \;&\; \dots \;&\;
    D^{\gamma^{n-1}}\!f_{n-1}
    \end{array} \right ) \not\equiv 0.
$$
\end{theorem}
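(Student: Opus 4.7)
The plan is to argue by induction on $n$, combined with a greedy construction of the multi-indices and a Leibniz-type propagation of vanishing Hasse derivatives. Write $\Delta=1$ in characteristic zero and $\Delta=p^{s-1}$ in characteristic $p$, and write $K=\mathbf{F}$ or $K=\mathcal{M}_m[p^s]$ accordingly. The base case $n=1$ is immediate: take $\gamma^0=(0,\dots,0)$, and $f_0\ne0$ by $K$-linear independence. For the inductive step, assume the result for all families of size smaller than $n$; in particular, any $K$-linearly independent subfamily of $n-1$ of the $f_j$'s admits a nonzero generalized Wronskian and is therefore $\mathcal{M}_m$-linearly independent.

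Construct the sequence greedily. Set $\gamma^0=(0,\dots,0)$, and having chosen $\gamma^0,\dots,\gamma^{i-1}$ so that the row vectors $v_{\gamma^\ell}=(D^{\gamma^\ell}\!f_0,\dots,D^{\gamma^\ell}\!f_{n-1})\in\mathcal{M}_m^n$ are $\mathcal{M}_m$-linearly independent, attempt to choose $\gamma^i$ with $|\gamma^i|\le|\gamma^{i-1}|+\Delta$ such that $v_{\gamma^i}$ lies outside the $\mathcal{M}_m$-span of the previous rows. If this succeeds through $i=n-1$, the $n\times n$ matrix has $\mathcal{M}_m$-linearly independent rows and hence nonzero determinant, and we are done. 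Otherwise, the process halts at some $r<n$, giving $v_\gamma\in\mathrm{span}_{\mathcal{M}_m}\{v_{\gamma^0},\dots,v_{\gamma^{r-1}}\}$ for every $\gamma$ with $|\gamma|\le|\gamma^{r-1}|+\Delta$. Since this span is a proper $\mathcal{M}_m$-subspace of $\mathcal{M}_m^n$, there exist $c_0,\dots,c_{n-1}\in\mathcal{M}_m$, not all zero, such that $\sum_j c_j\,\Dgamma f_j=0$ for every such $\gamma$; in particular this holds at $\gamma=0$ and at every $\delta$ with $|\delta|\le\Delta$. Fix $k$ with $c_k\ne0$ and rescale so that $c_k=1$.

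The heart of the argument is to show $D^\delta\!c_j=0$ for every $j$ and every $\delta$ with $0<|\delta|\le\Delta$, which we prove by an inner induction on $|\delta|$. Applying $D^\delta$ to the $\gamma=0$ relation via the Hasse Leibniz rule (Proposition~\ref{Hasseprops}(ii)) gives $\sum_j\sum_{\alpha+\beta=\delta}\Dalpha c_j\cdot\Dbeta f_j=0$; subtracting the $\gamma=\delta$ relation, invoking the inner inductive hypothesis to kill every term with $0<|\alpha|<|\delta|$, and using $D^\delta\!c_k=D^\delta(1)=0$, leaves
$$
    \sum_{j\ne k}D^\delta\!c_j\cdot f_j=0.
$$
The outer inductive hypothesis applied to $\{f_j:j\ne k\}$ yields $\mathcal{M}_m$-linear independence of this subfamily, so $D^\delta\!c_j=0$ for every $j$. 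Once this holds for all $0<|\delta|\le\Delta$, Proposition~\ref{Hasseprops}(iii) gives $D_i^\ell c_j=0$ for $0<\ell\le\Delta$ and every $i$, whence by Proposition~\ref{pthpower}(A) in characteristic zero or Proposition~\ref{pthpower}(B1) in positive characteristic, each $c_j$ lies in $\mathcal{M}_m[\Delta+1]=K$. Thus $\sum_j c_jf_j=0$ is a nontrivial $K$-linear relation, contradicting the hypothesis and forcing the greedy construction to succeed. The hard part is precisely this inner induction: each step depends on the Leibniz rule collapsing to a single surviving term and on the $\mathcal{M}_m$-linear independence of the $(n-1)$-subfamily supplied by the outer induction.
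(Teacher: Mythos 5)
There is a genuine gap at the decisive step of your inner induction. From the single relation $\sum_{j\ne k}D^\delta\! c_j\cdot f_j=0$ you conclude $D^\delta\! c_j=0$ by asserting that the subfamily $\{f_j:j\ne k\}$ is ``$\mathcal{M}_m$-linearly independent'' because it has a nonvanishing generalized Wronskian. That assertion is false on its face: the $f_j$ are elements of the field $\mathcal{M}_m$, and any two or more nonzero elements of a field are linearly dependent over that field (e.g.\ $f_1\cdot f_0-f_0\cdot f_1=0$). What a nonvanishing generalized Wronskian actually gives you is the $\mathcal{M}_m$-linear independence of the derivative \emph{vectors} $\bigl(D^{\gamma^0}\!f_j,\dots,D^{\gamma^{n-2}}\!f_j\bigr)$, i.e.\ of the columns of the Wronskian matrix. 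To use that, you need the relation $\sum_j D^\delta\! c_j\cdot D^{\gamma^i}\!f_j=0$ at \emph{every} level $i=0,\dots,n-2$, not only at $\gamma=0$; this is exactly how the paper proceeds: it takes the $n-1$ multi-indices supplied by the induction hypothesis as the first rows, applies the derivation to the relation at each of those levels (the cross terms vanish because the annihilating relation is known for all $\gamma$ up to the relevant degree), and then inverts the nonzero $(n-1)\times(n-1)$ sub-Wronskian to force all the derivatives of the coefficients to vanish.

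Even after that repair, your greedy construction creates a second problem the paper does not have. If the greedy process halts at some $r<n-1$, you obtain only $r$ equations $\sum_j D^\delta\! c_j\cdot D^{\gamma^i}\!f_j=0$, $i=0,\dots,r-1$, in the $n-1$ unknowns $D^\delta\! c_j$; the annihilator of the span $V_r$ then has dimension $n-r\ge2$, and knowing that $(D^\delta\! c_j)_j$ lies in that annihilator does not force it to be zero (nor even to be proportional to $(c_j)_j$). The paper avoids this entirely by using the induction hypothesis on $f_0,\dots,f_{n-2}$ to produce $n-1$ independent rows \emph{at once}, so that the span $V$ of all admissible rows, if proper, has dimension exactly $n-1$, its annihilator is one-dimensional, and the normalized coefficient vector $(Q_0,\dots,Q_{n-2},1)$ is uniquely determined --- which is what makes the conclusion $D_\ell^{p^t}Q_j=0$ available. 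You would need either to prove that greedy failure can only occur at $r=n-1$, or to restructure the induction as the paper does. Your Leibniz bookkeeping and the passage from $D^\delta\! c_j=0$ for $|\delta|\le p^{s-1}$ to $c_j\in\mathcal{M}_m[p^s]$ via Proposition~\ref{pthpower} are fine, and would be a slightly cleaner alternative to the paper's iterated use of $D_\ell^{p^t}$ as a derivation, but only once the linear algebra above is fixed.
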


\begin{remark*} The determinant in Theorem~\ref{gwronsk} is called a
\textbf{generalized Wronskian}. For polynomials in characteristic zero,
this theorem, with a different proof, appears in
\cite{Roth}.
In the case of complex entire functions of several variables,
a similar theorem can be found in \cite{Fujimoto}.
\end{remark*}

\begin{remark*}Often, \textit{e.g.,} \cite{HsiaWang},
one tends to state this lemma with
$|\gamma^i|\le i$ (resp.\ $|\gamma^i|\le ip^{s-1}$), but we
will want to give a lower bound on $\sum |\gamma^i|$ in terms
of $|\gamma^{n-1}|,$ and so for us the observation that
$|\gamma^{i-1}|\ge|\gamma^i|-1$ (resp.\
$|\gamma^{i-1}|\ge|\gamma^i|-p^{s-1}$) is important.
\end{remark*}

\begin{remark*} We also remark here that Theorem~\ref{gwronsk}
can be used to derive a positive characteristic
Cartan-type second main theorem for linearly non-degenerate
non-Archimedean analytic curves in projective space.
For instance, the proof given in \cite[Th.~5.1]{CherryYe}
goes through once a non-vanishing generalized Wronskian exists.
\end{remark*}

\begin{proof}
We write the proof in the case of positive characteristic.
The same proof works in characteristic zero by
Proposition~\ref{pthpower}~(A) if all powers of $p$ are replaced by $1.$

We proceed by induction on $n.$ When $n=1,$ the theorem is trivial.
Now assume that the theorem is true for $n-1.$
By the induction
hypothesis, there exist multi-indices
$\gamma^0,\gamma^1,\dots,\gamma^{n-2}$ with
$|\gamma^i|\le |\gamma^{i-1}|+p^{s-1}$
and such that the $D^{\gamma^i}\! f$ for \hbox{$i=0,\dots,n-2$}
span an $n-1$ dimensional $\mathcal{M}_m$ vector subspace
of $\mathcal{M}_m^n.$ Let $k=|\gamma^{n-2}|+p^{s-1}.$
Let $V$ be the
$\mathcal{M}_m$ vector subspace of  $\mathcal{M}_m^n$
spanned by $\Dgamma f$
for all $|\gamma|\le k.$
 If the theorem were not
true, then $V$ could not have dimension $n,$ and so there exist
\hbox{$Q_0,\dots,Q_{n-1}$} not all zero in $\mathcal{M}_m$ such that
\begin{equation}\label{allvanish}
    Q_0 \Dgamma f_0 + \dots + Q_{n-1} \Dgamma f_{n-1} \equiv 0
\end{equation}
for every $\gamma$ with $|\gamma|\le k.$
Because the vectors
$$
    (D^{\gamma^0}\!f_j,\dots,D^{\gamma^{n-2}}\!f_j)
    \quad\text{for}\quad j=0,\dots,n-2
$$
are linearly independent over $\mathcal{M}_m$
by the induction hypothesis, we can assume $Q_{n-1}\equiv1,$ and hence we
have
$$
    Q_0 \Dgamma f_0 + \dots + Q_{n-2} \Dgamma f_{n-2} + f_{n-1} \equiv 0
    \qquad\textnormal{for all~}|\gamma|\le k.
$$

Our goal is to show that the $Q_j$ are in $\mathcal{M}_m[p^s].$
Note that for \hbox{$i=0,\dots,n-2$} and \hbox{$\ell=1,\dots,m,$}
\begin{align*}
    0&=D_\ell^1 \left[\sum_{j=0}^{n-2} Q_j D^{\gamma^i}\!f_j
        + D^{\gamma^i}f_{n-1}\right]\\
    &=\sum_{j=0}^{n-2}D_\ell^1 Q_j D^{\gamma^i}\!f_j+ \sum_{j=0}^{n-1}Q_jD_\ell^1
    D^{\gamma^i}\!f_j\\
    &=\sum_{j=0}^{n-2}D_\ell^1 Q_j D^{\gamma^i}\!f_j,
\end{align*}
where the last line follows from Proposition~\ref{Hasseprops}~(iii)
and equation~(\ref{allvanish}).  By the linear independence
of the vectors $(D^{\gamma^0}\!f_j,\dots,D^{\gamma^{n-2}}\!f_j),$
we conclude that $D_\ell^1 Q_j\equiv0$ for all $\ell$ and all
\hbox{$j=1,\dots,n-2.$}
Thus, the $Q_j$ belong to \hbox{$\mathcal{M}_m[2]=\mathcal{M}_m[p].$}
Now assume the $Q_j$ belong to $\mathcal{M}_m[p^t]$
for some $t\ge1.$ By Proposition~\ref{pthpower}~(B),
we can apply $D_\ell^{p^{t+1}}$ as if it were a derivation to get
\begin{align}
    0&=D_\ell^{p^{t+1}}\!\! \left[\sum_{j=0}^{n-2} Q_j D^{\gamma^i}\!f_j
        + D^{\gamma^i}\!f_{n-1}\right]\nonumber \\
    &=\sum_{j=0}^{n-2}D_\ell^{p^{t+1}}\!\! Q_j D^{\gamma^i}\!f_j+ \sum_{j=0}^{n-1}Q_jD_\ell^{p^{t+1}}\!\!
    D^{\gamma^i}\!f_j. \label{rightvanish}
\end{align}
If $t<s,$ we can use Proposition~\ref{Hasseprops}~(iii),
the fact that
$$
    |\gamma^i|\le |\gamma^{n-2}| \le (k-1)p^{s-1}
$$
and equation~(\ref{allvanish}) to conclude that the right-hand sum
in~(\ref{rightvanish}) vanishes, and thus,
$$
    0=\sum_{j=0}^{n-2}D_\ell^{p^{t+1}}\!\! Q_j D^{\gamma^i}\!f_j.
$$
Again, by linear independence, we conclude $D_\ell^{p^{t+1}}\!\! Q_j\equiv0$
and so the $Q_j$ belong to $\mathcal{M}_m[p^{t+1}].$ Continuing in this manner,
we find that the $Q_j$ are in $\mathcal{M}_m[p^s],$ contradicting
the assumption that the $f_j$ are linearly independent over
$\mathcal{M}_m[p^s].$
\end{proof}

\begin{proposition}\label{divDgamma}
Let $f$ be an entire function in $\mathcal{E}_m.$
Let $\gamma=(\gamma_1,\dots,\gamma_m)$ be a multi-index.
Let $P$ be an irreducible element of $\mathcal{E}_m$ that
divides $f$ with exact multiplicity $e.$
If $e>|\gamma|,$ then $P^{e-|\gamma|}$ divides
$\Dgamma f.$ Moreover, if \hbox{$\mathrm{char~}\mathbf{F}=p>0$}
and if $e$ is divisible by \hbox{$p^s>\max\{\gamma_1,\dots,\gamma_m\},$}
then $P^e$ divides $\Dgamma f.$
\end{proposition}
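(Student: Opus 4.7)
The plan is to factor $f = P^e \tilde f$ with $\gcd(P, \tilde f) = 1$ and apply the Hasse Leibniz rule from Proposition~\ref{Hasseprops}~(ii):
$$
    \Dgamma f = \sum_{\alpha+\beta=\gamma} \Dalpha (P^e)\,\Dbeta \tilde f.
$$
Both parts of the statement then reduce to tracking the divisibility of $\Dalpha (P^e)$ by powers of $P$.

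For the first assertion, I would first establish the auxiliary claim that for every multi-index $\alpha$ with $|\alpha| \le e$, $P^{e-|\alpha|}$ divides $\Dalpha (P^e)$, by induction on $e$. The case $e = 0$ is trivial; for the inductive step, apply Leibniz to the factorization $P^e = P \cdot P^{e-1}$:
$$
    \Dalpha (P^e) = \sum_{\beta+\gamma=\alpha} \Dbeta P \cdot \Dgamma (P^{e-1}).
$$
The $\beta = 0$ summand is $P \cdot \Dalpha (P^{e-1})$, divisible by $P \cdot P^{e-1-|\alpha|} = P^{e-|\alpha|}$ by the induction hypothesis. For $|\beta| \ge 1$ one has $|\gamma| \le |\alpha|-1$, so by induction $\Dgamma (P^{e-1})$ is divisible by $P^{e-1-|\gamma|}$, an exponent already at least $e - |\alpha|$. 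Feeding this claim (applied to each $\alpha\le\gamma$ in the Leibniz expansion of $\Dgamma f$) yields the required divisibility of $\Dgamma f$ by $P^{e-|\gamma|}$.

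For the second assertion, I would exploit the Freshman's dream in characteristic $p$: writing $e = p^s q$ gives
$$
    P^e = (P^q)^{p^s} = \sum_\delta c_\delta^{\,p^s}\, z^{p^s \delta},
$$
so every monomial of $P^e$ has each exponent divisible by $p^s$. Differentiating term by term,
$$
    \Dalpha (P^e) = \sum_\delta c_\delta^{\,p^s}\, \binom{p^s\delta}{\alpha}\, z^{p^s\delta - \alpha}.
$$
Lucas's theorem forces $\binom{p^s n}{k} \equiv 0 \pmod p$ whenever $0 < k < p^s$, since the base-$p$ digits of $p^s n$ in positions $0,\dots,s-1$ all vanish while $k<p^s$ is supported on exactly those positions. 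By hypothesis each $\alpha_i \le \gamma_i < p^s$, so if $\alpha \ne 0$ then some $\alpha_i$ satisfies $0 < \alpha_i < p^s$ and the product $\prod_i \binom{p^s\delta_i}{\alpha_i}$ vanishes. Hence only the $\alpha = 0$ term survives in the Leibniz expansion of $\Dgamma f$, leaving $\Dgamma f = P^e \,\Dgamma \tilde f$, which is divisible by $P^e$.

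The main subtlety is organizing the induction in the first part: because the composition formula $\Dalpha \Dbeta f = \binom{\alpha+\beta}{\beta} D^{\alpha+\beta}\!f$ carries a binomial coefficient that can vanish in positive characteristic, one cannot reduce $\Dalpha$ to an iterated composition of single-variable Hasse derivatives. Running the induction directly off the Leibniz rule for $P^e = P \cdot P^{e-1}$ sidesteps this, and the term with $\beta=0$ is what supplies the single extra factor of $P$ needed at each step. Once that is in place, the second assertion is essentially bookkeeping with Lucas's theorem, in the spirit of property~(iv) of Proposition~\ref{Hasseprops}.
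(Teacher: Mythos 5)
Your proof is correct and takes essentially the same route as the paper's: both rest on the Leibniz rule of Proposition~\ref{Hasseprops}~(ii), an induction on $e$ that peels off one factor of $P$ at a time, and the vanishing (via Lucas's theorem) of the low-order Hasse derivatives of a $p^s$-th power for the second assertion. The only difference is organizational --- the paper first reduces to the single-variable operators $D_i^k$ using $\Dgamma=D_1^{\gamma_1}\circ\cdots\circ D_m^{\gamma_m}$ (which is legitimate, since the coefficient $\binom{\alpha+\beta}{\beta}$ in Proposition~\ref{Hasseprops}~(iii) equals $1$ when $\alpha$ and $\beta$ are supported on disjoint sets of variables, so the obstruction you worry about only arises when composing derivatives in the \emph{same} variable) --- whereas you run the induction on arbitrary multi-indices directly.
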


\begin{proof}
Because the $D_i$ commute, it suffices to show the proposition
for $\Dgamma=D_i^k.$

In the special case that \hbox{$\mathrm{char~}\mathbf{F}=p>0$}
and $e$ is divisible by \hbox{$p^s>\max\{\gamma_1,\dots,\gamma_m\},$}
that $P^e$ divides $D_i^k f$ follows easily from
Proposition~\ref{Hasseprops}~(ii) since
$$
    D_i^jP^{\ell p^s}=0 \qquad\text{for all~} 0<j<p^s
$$
and $p^s>k$ by assumption.

We show the general case  by induction
on $e$ and $k.$ The case $e=k$ is trivial.
We now suppose that the proposition holds for all
$D_i^j$ with $j\le k$ and for $e$ and then show that it also holds
for $k$ and $e+1.$
Suppose $f=P^{e+1}g$ with $g$ relatively
prime to $P.$ By Proposition~\ref{Hasseprops}~(ii),
$$
    D_i^k f = D_i^k(P\cdot P^eg)
    =PD_i^k(P^eg)+\sum_{j=1}^k D_i^j P D_i^{k-j}(P^eg).
$$
If $e>k\ge k-j,$ then by induction, $P^{e-(k-j)}$ divides
$D_i^{k-j}(P^eg)$ and hence $P^{e-k}$ divides
$D_i^{k-j}(P^eg)$ for all $j=0,\ldots,k$ and $P^{e+1-k}$
divides $D_i^{k-j}(P^eg)$ for all \hbox{$j>0.$}
\end{proof}

\section{Higher Radicals and Truncated Counting Functions}\label{truncated}
If $\mathbf{F}$ has characteristic zero, if $f$ is in $\mathcal{E}_m,$
and if $\ell\ge1$ is an integer, then clearly if $P$ is an irreducible
element of $\mathcal{E}_m$ that divides $f$ with multiplicity $e,$
then $P$ divides $\gcd(f,R(f)^\ell)$ with multiplicity
$\min\{\ell,e\}.$  Thus, \textit{in characteristic zero},
we can define the
\textbf{$\ell$-th truncated counting function} by
$$
    N^{(\ell)}_f(0,r)=N_{\gcd(f,R(f)^\ell)}(0,r).
$$

We saw at the end of section~\ref{prelim} that in positive
characteristic $p,$ the radical $R(f)$ does not contain those irreducible
factors of $f$ that divide $f$ with multiplicity divisible by $p.$
Although that was exactly what was appropriate in Theorem~\ref{basic},
when we consider
$$
    f_n=f_0+\dots+f_{n-1}
$$
with $n>2,$ we will not be able to ignore all
irreducible factors with multiplicity divisible by $p.$
Thus, we want to define
truncated counting functions in positive characteristic that
include all irreducible factors.

\textit{For the rest of this section,
let $\mathbf{F}$ have positive characteristic $p.$}
We will use the following proposition to inductively define
\textbf{higher $p^s$-radicals} for integers $s\ge1.$

\begin{proposition}\label{pradical}
Let $f$ be an entire function in $\mathcal{E}_m$ and let $s\ge1$ be
an integer.  Assume that we have defined a $p^{s-1}$-radical
$R_{p^{s-1}}(f)$ that has the property that $R_{p^{s-1}}(f)$ is
square-free and has the property that an irreducible element of
$\mathcal{E}_m$ divides $R_{p^{s-1}}(f)$ if and only if $P$ divides
$f$ with multiplicity not divisible by $p^{s}.$ Let
$$
    \bar f = \frac{f}{\gcd(f,R_{p^{s-1}}(f)^{p^s})}.
$$
For $i=1,\ldots,m,$ let $g_i=\gcd(\bar f,D_i^{p^s}\bar f),$
let $h_i=\bar f/g_i,$ and let $H$ be the least common multiple of the $h_i.$
Let
$$
    G=\frac{H}{\gcd(H,R_{p^{s-1}}(H)^{p^s-1})}.
$$
Then,
\begin{enumerate}
\item[(i)] If $P$ is an irreducible element of $\mathcal{E}_m$
that divides $G,$ it divides $G$ with multiplicity exactly $p^s.$
\item[(ii)] If $P$ is an irreducible element of $\mathcal{E}_m,$
then $P$ divides $G$ if and only if $P$ divides $f$ with multiplicity
a multiple of $p^s$ but not a multiple of $p^{s+1}.$
\end{enumerate}
It follows that $G$ is a $p^s$-th power, so let $R$ be a
$p^s$-th root of $G$ and let $R_{p^s}(f)$ be the least common
multiple of $R_{p^{s-1}}$ and $R.$
\end{proposition}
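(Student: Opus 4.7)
My plan is to track, irreducible by irreducible, how the multiplicity of each $P \in \mathcal{E}_m$ evolves through $\bar f$, $H$, and finally $G$. The key algebraic tools will be Proposition~\ref{Hasseprops} (Leibniz and Frobenius) and the non-trivial identity $D_i^j P^{p^t} \equiv 0$ for $0 < j < p^t$, which follows from Proposition~\ref{Hasseprops}(iv) together with the Frobenius expansion of $P^{p^t}$ in characteristic $p$.

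The first step is to use the inductive hypothesis---$R_{p^{s-1}}(f)$ is square-free with $P \mid R_{p^{s-1}}(f)$ iff $p^s \nmid e_P^f$---to read off the $P$-multiplicity of $\bar f = f / \gcd(f, R_{p^{s-1}}(f)^{p^s})$. The second, and most delicate, step is to compute the exact $P$-multiplicity of $D_i^{p^s} \bar f$. Writing $\bar f = P^{e_P^{\bar f}} \tilde f$ with $\tilde f$ coprime to $P$ and applying Leibniz, the identity $D_i^j P^{p^t} \equiv 0$ for $0 < j < p^t$ collapses most of the Leibniz sum; what survives in the case $p^s \mid e_P^{\bar f}$ but $p^{s+1} \nmid e_P^{\bar f}$---writing $e_P^{\bar f} = c p^s$ with $p \nmid c$---is essentially $P^{e_P^{\bar f}} D_i^{p^s}\tilde f + c (D_iP)^{p^s} P^{e_P^{\bar f} - p^s} \tilde f$, the second term giving exact multiplicity $e_P^{\bar f} - p^s$ whenever $\partial P/\partial z_i \not\equiv 0$. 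The case $p^{s+1} \mid e_P^{\bar f}$ falls under the ``moreover'' clause of Proposition~\ref{divDgamma} and yields $P^{e_P^{\bar f}} \mid D_i^{p^s}\bar f$. Choosing an index $i$ with $\partial P/\partial z_i \not\equiv 0$---which exists since an irreducible $P$ cannot be a $p$-th power (as in the proof of Proposition~\ref{radical})---one then reads off that $h_i = \bar f/g_i$ has $P$-multiplicity either $0$ or exactly $p^s$, and hence so does $H = \mathrm{lcm}(h_1, \ldots, h_m)$.

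With $H$'s multiplicities understood, the cleanup step $G = H/\gcd(H, R_{p^{s-1}}(H)^{p^s-1})$ strips away precisely those leftover factors whose $H$-multiplicity is not already a multiple of $p^s$ (these are exactly the irreducibles appearing in $R_{p^{s-1}}(H)$); what remains has every surviving irreducible at multiplicity exactly $p^s$, giving~(i). Translating the conditions on $e_P^{\bar f}$ back to conditions on $e_P^f$ via Step~1 yields~(ii), and the concluding assertion that $G$ is a $p^s$-th power is then immediate from~(i), since $G$ is a product of $p^s$-th powers of distinct irreducibles. The principal obstacle is the second step: one needs the \emph{exact} $P$-multiplicity of $D_i^{p^s}\bar f$, not merely a lower bound, which forces a careful Leibniz expansion exploiting the characteristic-$p$ vanishing $D_i^j P^{p^t} \equiv 0$.
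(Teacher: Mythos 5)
Your overall strategy---tracking the multiplicity of each irreducible $P$ through $\bar f$, then $H$, then $G$, via the Leibniz rule and the vanishing $D_i^jP^{p^t}\equiv0$ for $0<j<p^t$---is the same as the paper's. But there is a genuine gap in your Steps 2--3: you only treat irreducibles with $p^s\mid e_P^{\bar f}$, splitting into $p^{s+1}\nmid e_P^{\bar f}$ and $p^{s+1}\mid e_P^{\bar f}$. The definition of $\bar f$ does not guarantee that every irreducible factor of $\bar f$ has multiplicity divisible by $p^s$: if $P$ divides $f$ with multiplicity $e_P^f>p^s$ not divisible by $p^s$, then $P$ appears in the \emph{square-free} function $R_{p^{s-1}}(f)$ with multiplicity one, so $\gcd(f,R_{p^{s-1}}(f)^{p^s})$ absorbs only $P^{p^s}$, and $P$ survives in $\bar f$ with multiplicity $e_P^f-p^s$, which in general is not a multiple of $p^s$. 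For such $P$ your assertion that ``$h_i$ has $P$-multiplicity either $0$ or exactly $p^s$, and hence so does $H$'' fails. Concretely, take $p=2$, $m=1$, $s=1$, $f=z^7(1+z)^3$: then $R(f)=z(1+z)$, $\bar f=z^5(1+z)$, $D_1^2\bar f=z^4$, $g_1=z^4$, and $H=h_1=z(1+z)$, so both irreducible factors divide $H$ with multiplicity $1$, strictly between $0$ and $p^s=2$. Notice also that if your claim about $H$ were true, the final division by $\gcd(H,R_{p^{s-1}}(H)^{p^s-1})$---which you correctly describe as stripping exactly the factors whose $H$-multiplicity is not a multiple of $p^s$---would be vacuous; that step is in the construction precisely to dispose of the irreducibles your case analysis omits.

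The paper's proof avoids this by writing the multiplicity of $P$ in $\bar f$ as $jp^s+e$ with $0\le e<p^s$ and casing on whether $p$ divides $j$, not on whether $p^s$ or $p^{s+1}$ divides the multiplicity itself. When $p\nmid j$, the Leibniz/Frobenius computation you sketch (applied to $P^{jp^s}\cdot(P^e\tilde f)$ rather than to $P^{cp^s}\tilde f$) gives exact multiplicity $(j-1)p^s+e$ for $D_i^{p^s}\bar f$ at an index $i$ with $D_iP\not\equiv0$, hence multiplicity exactly $p^s$ in $H$; when $p\mid j$ the leading Leibniz term dies and $P$ divides $H$ with multiplicity at most $e<p^s$, after which the $\gcd$ with $R_{p^{s-1}}(H)^{p^s-1}$ removes it. You should redo your case analysis along these lines; the computations you did carry out (the case $e=0$) are correct, including the appeal to the ``moreover'' clause of Proposition~\ref{divDgamma} when $p^{s+1}$ divides the multiplicity. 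Be warned, finally, that the literal ``only if'' direction of~(ii) is delicate: in the case $p\nmid j$, $e\ne0$ the above shows $P$ survives into $G$ with multiplicity $p^s$ even though $e_P^f$ is not a multiple of $p^s$ (try $f=z^5$, $p=2$, $s=1$, which yields $G=z^2$); such $P$ already divide $R_{p^{s-1}}(f)$, so the concluding least common multiple still produces the correct $R_{p^s}(f)$, but any proof of~(ii) exactly as stated must confront this case rather than fold it into the ones you listed.
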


\begin{proof}
Our induction begins with the radical as defined in
section~\ref{prelim}, so we let $R_{p^0}(f)=R(f).$
To show the inductive step,
let $P$ be an irreducible element of $\mathcal{E}_m$ that divides
$\bar f.$ Note that if $P$ divides $\bar f,$ then it divides it with
multiplicity at least $p^s.$ Write
$$
    \bar f=P^{jp^s+e}\tilde f,
$$
where $j\ge1$ and $0\le e < p^s$ are integers and $\tilde f$ is relatively
prime to $P.$
Then, by Proposition~\ref{Hasseprops}~(ii) and~(iv)
and Proposition~\ref{pthpower}~(B), for $i=1,\ldots,m,$
\begin{equation}\label{pradicaleqn}
    D_i^{p^s}\bar f = D_i^{p^s}[P^{jp^s}P^e\tilde f]=
    P^e\tilde f (D_i[P^j])^{p^s}+P^{jp^s}D_i^{p^s}[P^e\tilde f].
\end{equation}

We first consider the case that $j$ is not divisible by $p.$
Because $P$ is irreducible and hence not a $p$-th power,
there exists an $i$ such that $D_iP\not\equiv0.$
Because
$$
    D_i(P^j)=jP^{j-1}D_iP,
$$
we see from the assumption that
$j$ is not divisible by $p,$
that $P$ divides
$D_i^{p^s}f$ with exact multiplicity $(j-1)p^s+e,$ and so by
Proposition~\ref{divDgamma}, $P$ divides $H$ with exact multiplicity
$p^s.$

In case that $j$ is divisible by $p,$ we see from
equation~(\ref{pradicaleqn}) that $P$ divides $H$ with multiplicity
at most $e<p^s.$ Thus, $P$ does not divide $R.$
\end{proof}

We now show the existence of the square free part of an entire
function, which is square free and contains all the irreducible factors
dividing the function.

\begin{theorem}\label{sqfree}
Let $f$ be an entire function in $\mathcal{E}_m.$ There exists an
entire function $S(f)$ in $\mathcal{E}_m$ such that $S(f)$ is square
free and such that an irreducible element $P$ in $\mathcal{E}_m$
divides $S(f)$ if and only if it divides $f.$
\end{theorem}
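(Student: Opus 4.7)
The plan is to iterate Proposition~\ref{pradical} to produce an ascending chain of square-free higher radicals, then assemble them into a single entire function by a non-Archimedean Weierstrass product. Applying Proposition~\ref{pradical} inductively for $s=0,1,2,\ldots,$ we obtain entire functions $R_{p^s}(f),$ each square-free, such that an irreducible $P\in\mathcal{E}_m$ divides $R_{p^s}(f)$ if and only if the multiplicity of $P$ in $f$ is not divisible by $p^{s+1}.$ These fit into a divisibility chain $R_{p^0}(f)\mid R_{p^1}(f)\mid R_{p^2}(f)\mid\cdots,$ and any fixed irreducible divisor $P$ of $f$ appears in $R_{p^s}(f)$ once $p^{s+1}>e(P),$ where $e(P)$ is the multiplicity of $P$ in $f.$

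Next I would reduce the problem to a local stabilization using the UFD structure of $\mathcal{O}(\mathbf{B}^m(r))$ for $r\in|\mathbf{F}^\times|$ (cited earlier from \cite[\S5.2.6, Th.~1]{BGR}). Fix such an $r.$ The restriction of $f$ to $\mathbf{B}^m(r)$ is a finite product of local irreducibles with bounded multiplicities, and any global irreducible $P$ of $\mathcal{E}_m$ dividing $f$ with zeros in $\mathbf{B}^m(r)$ becomes a non-unit in $\mathcal{O}(\mathbf{B}^m(r))$ whose local factors have the same multiplicity $e(P)$ in $f.$ Hence only finitely many such $P$ arise, all with multiplicity bounded by some $E(r).$ Choosing an exhausting sequence $r_n\in|\mathbf{F}^\times|$ with $r_n\to\infty$ and taking $s_n$ large enough that $p^{s_n+1}>E(r_n),$ the quotient $Q_n:=R_{p^{s_{n+1}}}(f)/R_{p^{s_n}}(f)$ lies in $\mathcal{E}_m$ and has no zeros in $\mathbf{B}^m(r_n),$ because every irreducible with zeros in $\mathbf{B}^m(r_n)$ is already captured by $R_{p^{s_n}}(f).$

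Finally, I would build the Weierstrass product. Since $Q_n$ is a unit in the Tate algebra $\mathcal{O}(\mathbf{B}^m(r_n)),$ the standard characterization of units yields $|a^{(n)}_\alpha|r_n^{|\alpha|}<|a^{(n)}_0|$ for every $|\alpha|\ge1,$ where $Q_n=\sum_\alpha a^{(n)}_\alpha z^\alpha.$ Setting $c_n=1/a^{(n)}_0=1/Q_n(0)\in\mathbf{F}^\times,$ the estimate $|c_nQ_n-1|_r\le r/r_n$ holds for every $r\le r_n,$ so the infinite product $\prod_n c_nQ_n$ converges in $\mathcal{E}_m.$ Defining
$$
    S(f)=R_{p^{s_0}}(f)\cdot\prod_{n=0}^\infty c_nQ_n,
$$
we obtain an entire function that is square-free and whose irreducible divisors are exactly those of $f,$ each with multiplicity one.

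The main obstacle is ensuring the infinite product genuinely converges to an entire function with the prescribed zero set. This hinges on the unit characterization in the Tate algebra $\mathcal{O}(\mathbf{B}^m(r_n))$ together with the bound $|c_nQ_n-1|_r\le r/r_n,$ both routine in non-Archimedean analysis but requiring care to apply cleanly across the chain. With these in hand, the tower of higher radicals combined with the Weierstrass normalization provides the desired $S(f).$
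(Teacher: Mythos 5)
Your proposal is correct and follows essentially the same route as the paper: both stabilize the tower of higher $p^s$-radicals on an exhaustion by balls $\mathbf{B}^m(r_n)$ using factoriality of the Tate algebras, and both assemble the limit via a convergent infinite product of normalized units satisfying an estimate of the form $|c_nQ_n-1|_r\le r/r_n.$ The paper organizes the telescoping as a tail correction $v_i=\prod_{j\ge i}u_{j,j+1}$ normalized at a point $z_0$ with $f(z_0)\ne0$ rather than your forward product normalized at the origin, but this is only a cosmetic difference.
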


We will call $S(f)$ the \textbf{square free part} of $f.$
We define the \textbf{$\ell$-th truncated counting function} by
$$
    N^{(\ell)}_f(0,r) = N_{\gcd(f,S(f)^\ell)}(0,r).
$$
As in characteristic zero, the
$\ell$-th truncated counting function truncates all multiplicities
higher than $\ell$ to $\ell.$

\begin{proof}[Proof of Theorem~\ref{sqfree}.]
The proof is similar to the proof of the existence of greatest
common divisors given in \cite{CherryGCD}. The case that $f$ is
identically zero is trivial, so assume that $f$ is not identically
zero. For \hbox{$i=1,2,\dots,$} let $r_i$ be an increasing sequence
of elements of $|\mathbf{F}^\times|$ such that $r_i\to\infty.$
Consider $f$ as an element of the ring $\mathcal{A}_m(r_i)$ of
analytic functions on $\mathbf{B}^m(r_i),$ the closed ball of radius
$r_i.$ Let $z_0$ be a point in $\mathbf{B}^m(r_1)$ such that
$f(z_0)\ne0.$ Let $R_{p^s}(f)$ be the higher radicals of $f$ defined
as in Proposition~\ref{pradical} normalized so that for each $s,$ we
have $R_{p^s}(f)(z_0)=1.$ Because the ring $\mathcal{A}_m(r_i)$ is
factorial \cite[\S5.2.6, Th.~1]{BGR}, only finitely many of the
irreducible elements in $\mathcal{E}_m$ that divide $f$ are
non-units in $\mathcal{A}_m(r_i).$ Each of these divides $f$ to some
finite multiplicity, and so there exists some $s_i$ such that every
irreducible element in $\mathcal{E}_m$ that divides $f$ and is not a
unit in $\mathcal{A}_m(r_i)$ also divides $R_{p^s}(f)$ for all $s\ge
s_i.$ This means that for $s,t\ge s_i,$ $R_{p^s}(f)$ and
$R_{p^t}(f)$ differ by a unit in $\mathcal{A}_m(r_n).$ Let
$u_{i,i+1}$ be the unit in $\mathcal{A}^m(r_i)$ such that
$$
    R_{p^{s_i}}(f) = u_{i,i+1}R_{p^{s_{i+1}}}(f),
$$
and note that $u_{i,i+1}(z_0)=1.$  Then, writing $u_{i,i+1}$
as a power series about $z_0,$ we have
$$
    u_{i,i+1}(z)=1+\sum_{|\gamma|\ge1}a_\gamma(z-z_0)^\gamma
    \qquad\text{with~}|a_\gamma|r_i^{|\gamma|} < 1
    \text{~for all~}|\gamma|\ge1.
$$
Thus, for $j>i,$
$$
    |u_{j,j+1}-1|_{r_{i}} < \frac{r_i}{r_j}.
$$
Since $r_i/r_j\to0$ as $j\to\infty$ with $i$ fixed, we can define
units $v_i$ in $\mathcal{A}_m(r_i)$ by
$$
    v_i=\prod_{j=i}^\infty u_{j,j+1}.
$$
For $j>i,$ we have
\begin{align*}
    R_{p^{s_j}}(f)v_i&=R_{p^{s_j}}(f)\prod_{k=i}^\infty u_{k,k+1}\\
    &=R_{p^{s_j}}(f)\left(\prod_{k=i}^{j-1}u_{k,k+1}\right)
    \left(\prod_{k=j}^\infty u_{k,k+1}\right)=R_{p^{s_i}}(f)v_j.
\end{align*}
This precisely means that $R_{p^{s^i}}(f)v_i^{-1}$ converges to an entire
function $F$ as $i\to\infty$ because the difference between
$R_{p^{s^i}}(f)v_i^{-1}$ and $R_{p^{s^j}}(f)v_j^{-1}$ is identically zero
on $\mathbf{B}^m(r_i).$ Note also that $v_iF=R_{p^{s_i}}(f)$
in $\mathcal{A}_m(r_i).$
\par
We claim that $F$ is square free
and that an irreducible element $P$ of $\mathcal{E}_m$ divides $F$ if and
only if it divides $f.$
\par
To show that $F$ is square free, suppose that $P$ is an irreducible
element of $\mathcal{E}_m$ such that $P^2$ divides $F.$
This means $P^2$ divides $R_{p^{s_i}}(f)$ in $\mathcal{A}_m(r_i).$
For $i$ sufficiently large, $P$ is not a unit in $\mathcal{A}_m(r_i),$
and so we would have that $R_{p^{s_i}}(f)$ is not square free
in $\mathcal{A}_m(r_i).$  However, the proof of
Proposition~\ref{pradical} works equally well in the ring
$\mathcal{A}_m(r_i),$ and thus $R_{p^{s_i}}(f)$ is also square free
in $\mathcal{A}_m(r_i).$
\par
Finally, let $P$ be an irreducible element of $\mathcal{E}_m.$
Suppose $P$ divides $f.$  Then, $P$ divides $R_{p^{s_i}}(f)$
for all $i$ sufficiently large.  In other words, there exist
analytic functions $h_i$ in $\mathcal{A}_m(r_i)$ such that
$$
    Ph_i=R_{p^{s_i}}(f).
$$
Becase $Ph_iv_i^{-1}$ converges to $F$ as $i\to\infty,$
for $j>i,$ we have
$$
    P(h_iv_i^{-1}-h_jv_j^{-1})=0 \qquad\text{in~}\mathcal{A}_m(r_i).
$$
Thus
$h_iv_i^{-1}$ converges to an entire function $H$ such that $PH=F,$
and so $P$ divides $F.$  For the other direction, suppose that $P$
divides $F.$  Then, $P$ divides $R_{p^{s_i}}(f)$ in $\mathcal{A}_m(r_i),$
and so again noticing that the proof of Proposition~\ref{pradical}
also works for $\mathcal{A}_m(r_i),$ we get that $P$ divides $f$
in $\mathcal{A}_m(r_i).$  In other words, there exist analytic functions
$g_i$ in $\mathcal{A}_m(r_i)$ such that $Pg_i=f.$  This implies
that the $g_i$ converge to an analytic function $G$ such that
$PG=f,$ and hence $P$ divides $f$ in $\mathcal{E}_m.$
\end{proof}

\section{Linear Algebra}\label{linalg}

Let $V$ be a vector space over a field $\mathbf{E}.$
Let $v_0,\ldots,v_n$ be $n$ linearly dependent vectors in
$V.$  Call an index set $I\subset\{0,\dots,n\}$
\textbf{minimal} if the set of vectors
$$
    \{v_i : i \in I\}
$$
are linearly dependent, but such that for every proper
subset $I'\subsetneq I,$ the sets of vectors
$\{v_i : i \in I'\}$ are linearly independent.

\begin{lemma}[Brownawell-Masser]\label{bm}
Let $v_0,\ldots,v_n$ be $n+1$ vectors in a vector space $V$ over
a field $\mathbf{E}$ such that $\sum v_i=0.$  Assume that no proper
subsum vanishes, \textit{i.e.,}
$$
    \sum_{i\in I} v_i\ne0 \text{~for all proper~}
    I\subsetneq\{1,\dots,n\}.
$$
Then, there exists an integer
$u\ge1,$ a partition
$$
    \{0,\dots,n\}=I_0\cup\ldots\cup I_{u-1},
$$
and non-empty subsets
$$
    J_\ell \subset \bigcup_{j=1}^\ell I_j, \qquad\ell=0,\ldots,u-2
$$
such that $I_0$ and $I_j\cup J_{j-1}$ for $j=1,\ldots,u-1$ are minimal.
\end{lemma}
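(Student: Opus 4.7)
The plan is to translate the lemma into matroid language: show that under the hypothesis the vector matroid on $\{v_0,\dots,v_n\}$ is connected, which by standard matroid theory means every pair of indices lies in a common minimal subset, and then extract the decomposition greedily. Recall that a vector matroid on $\{v_0,\dots,v_n\}$ is disconnected exactly when some non-trivial partition $\{0,\dots,n\}=A\sqcup B$ makes $\operatorname{span}\{v_i:i\in A\}$ and $\operatorname{span}\{v_i:i\in B\}$ linearly independent, i.e., when the two spans intersect only in zero. The no-vanishing-subsum hypothesis rules this out: the identity $\sum_{i\in A}v_i=-\sum_{i\in B}v_i$ exhibits a common vector in the two spans, and were this vector zero, $\sum_{i\in A}v_i=0$ would be a vanishing proper subsum, contrary to assumption. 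Hence the matroid is connected.

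Now construct the decomposition recursively. Let $I_0$ be any minimal subset of $\{0,\dots,n\}$, which exists because the whole set is linearly dependent. Suppose pairwise disjoint non-empty $I_0,\dots,I_{k-1}$ and non-empty $J_0,\dots,J_{k-2}$ have been chosen so that $J_{j-1}\subset I_0\cup\dots\cup I_{j-1}$ and $I_j\cup J_{j-1}$ is minimal for each $j=1,\dots,k-1$, and set $U_k=I_0\cup\dots\cup I_{k-1}$. If $U_k=\{0,\dots,n\}$, terminate with $u=k$. Otherwise pick $i\in U_k$ and $j\in\{0,\dots,n\}\setminus U_k$ arbitrarily and invoke matroid connectivity to obtain a minimal subset $C$ containing both $i$ and $j$; define $I_k=C\setminus U_k$ and $J_{k-1}=C\cap U_k$. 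Both sets are non-empty because $j\in I_k$ and $i\in J_{k-1}$, the containment $J_{k-1}\subset U_k$ is immediate, and $I_k\cup J_{k-1}=C$ is minimal by construction. Since $|U_k|$ grows strictly at each step, the procedure halts in at most $n+1$ steps, producing the required partition.

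The main technical obstacle is the passage from the non-trivial intersection of spans to an actual minimal subset meeting both sides of a partition --- that is, the matroid-theoretic equivalence between connectivity and the existence of a common circuit through any two elements. Over an arbitrary field this is proved by the standard circuit elimination axiom, or equivalently by iteratively reducing a crossing linear dependence to one of minimal support while preserving the crossing property and observing that a support-minimal dependence has support equal to a minimal subset. Once this equivalence is in place the rest of the argument is straightforward combinatorial bookkeeping, and the no-vanishing-subsum hypothesis enters the proof only once, at the point where connectivity is established.
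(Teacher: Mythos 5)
Your argument is correct in outline, but it takes a genuinely different route from the paper, which handles the trivial case and otherwise simply cites Brownawell--Masser's Lemma 6 rather than reproving the decomposition. What you do differently is to package the hypothesis as connectivity of the vector matroid on the index set and then invoke Whitney's theorem that in a connected matroid any two elements lie on a common circuit; the greedy extraction of $I_0$ and of the minimal sets $I_k\cup J_{k-1}$ from crossing circuits is then exactly right, and your single use of the no-vanishing-subsum hypothesis (the vector $\sum_{i\in A}v_i=-\sum_{i\in B}v_i$ is a nonzero element of $\mathrm{span}\{v_i:i\in A\}\cap\mathrm{span}\{v_i:i\in B\}$) correctly rules out a separator. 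Two remarks. First, you need less than Whitney's theorem: at each stage you only need \emph{some} minimal set meeting both $U_k$ and its complement, not one through two prescribed elements, and this weaker fact has a short direct proof bypassing matroid connectivity --- the space of linear relations among the $v_i$ is spanned by relations whose supports are minimal sets (reduce supports by elimination), so if every minimal set lay entirely inside $U_k$ or entirely inside its complement, the relation $\sum v_i=0$ would split as a relation supported in $U_k$ plus one supported in the complement, yielding a vanishing proper subsum; this is essentially the Brownawell--Masser argument. Second, your sketched justification of the circuit-through-two-elements step, ``reducing a crossing linear dependence to one of minimal support while preserving the crossing property,'' does not work as stated: eliminating against a circuit contained in one side of the partition can remove all of that side from the support, so the crossing property is not automatically preserved. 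That step should rest on the standard, citable theorem on connected components of a matroid (proved via circuit elimination and the rank decomposition), or be replaced by the weaker statement above. What your approach buys is a conceptual explanation --- the hypothesis is precisely matroid connectivity --- at the cost of importing a nontrivial theorem; the paper's citation (and the elementary argument sketched here) is more self-contained for this one application.
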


\begin{proof}
If $\{0,\ldots,n\}$ is minimal, set $I_0=\{0,\ldots,n\}.$
If $\{0,\ldots,n\}$ is not minimal, see \cite[Lem.~6]{BrownawellMasser}.
\end{proof}

In positive characteristic $p,$ we want to apply Theorem~\ref{gwronsk}
to entire functions linearly independent over $\mathbf{F},$ so we
complete this section by proving that if a collection of functions
are linearly independent over $\mathbf{F},$ then they are also
linearly independent over $\mathcal{M}_m[p^s]$ for some integer
$s\ge1.$

\begin{lemma}\label{linindep}
Let $f_1,\ldots,f_n$ be meromorphic functions in $\mathcal{M}_m$
linearly independent over $\mathbf{F},$
with \hbox{$\mathrm{char~}\mathbf{F}=p>0.$}
Then there exists an
integer $s\ge1$ such that $f_1,\ldots,f_n$ are linearly independent
over $\mathcal{M}_m[p^s].$
\end{lemma}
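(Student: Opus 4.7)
The plan is to argue by contradiction: assume that for every $s\ge 1$ the functions $f_1,\dots,f_n$ admit a nontrivial $\mathcal{M}_m[p^s]$-linear dependence and extract from these a nontrivial $\mathbf{F}$-linear dependence.

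First I would record two structural facts. Since the defining condition of $\mathcal{M}_m[k]$ becomes more restrictive as $k$ grows, the subfields $\mathcal{M}_m[p^s]$ form a decreasing chain: $\mathcal{M}_m[p^{s+1}]\subseteq\mathcal{M}_m[p^s]$. I claim moreover that
$$
\bigcap_{s\ge 1}\mathcal{M}_m[p^s]=\mathbf{F}.
$$
Writing $Q\in\bigcap_s\mathcal{M}_m[p^s]$ in lowest terms as $f/g$ with $f,g\in\mathcal{E}_m$ relatively prime, the argument in the proof of Proposition~\ref{pthpower}(B2) forces $f,g\in\mathcal{E}_m[p^s]$ for every $s$; but an entire function whose nonzero power-series coefficients are supported on multi-indices divisible by $p^s$ for every $s$ must be constant.

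Next, for each $s$ let $k_s$ denote the minimal cardinality of the support of a nontrivial $\mathcal{M}_m[p^s]$-linear dependence among the $f_i$. The nesting above makes $k_s$ non-decreasing; it is at most $n$ and at least $2$ (by $\mathbf{F}$-independence, no $f_i$ vanishes), so it stabilizes at some $k^*\in\{2,\dots,n\}$ for all $s\ge s_0$. Among the finitely many $k^*$-subsets of $\{1,\dots,n\}$, pigeonhole produces a fixed subset $I$ that is the support of a minimal dependence for an infinite set $\mathcal{S}$ of values of $s$.

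The crucial ingredient is uniqueness (up to scaling) of a minimal-support dependence. Given two such relations $\sum_{i\in I}c_if_i=0$ and $\sum_{i\in I}c_i'f_i=0$, the combination $c_{i_0}\sum c_i'f_i-c_{i_0}'\sum c_if_i$ eliminates the $i_0$-th term and hence, by the minimality of $I$, must vanish identically, forcing $(c_i')$ to be a scalar multiple of $(c_i)$. Fixing $i_0\in I$ and normalizing the $i_0$-th coefficient to $1$, the dependence $\sum_{i\in I}Q_i^{(s)}f_i=0$ with $Q_i^{(s)}\in\mathcal{M}_m[p^s]$ is uniquely determined for each $s\in\mathcal{S}$. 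For $s<s'$ in $\mathcal{S}$, the normalized relation at level $s'$ is also a normalized minimal relation at level $s$ (since $\mathcal{M}_m[p^{s'}]\subseteq\mathcal{M}_m[p^s]$), so by uniqueness $Q_i^{(s)}=Q_i^{(s')}$. These common coefficients therefore lie in $\mathcal{M}_m[p^s]$ for arbitrarily large $s$, hence in $\bigcap_s\mathcal{M}_m[p^s]=\mathbf{F}$, producing a nontrivial $\mathbf{F}$-linear relation among the $f_i$ and the desired contradiction. The main conceptual step is the uniqueness-up-to-scalar of minimal-support relations, which is precisely what allows coefficients to be identified across different values of $s$; the only other subtlety is the identification of $\bigcap_s\mathcal{M}_m[p^s]$ with $\mathbf{F}$, which is read off from Proposition~\ref{pthpower}(B2).
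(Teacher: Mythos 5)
Your proof is correct and follows essentially the same route as the paper's: pigeonhole on the finitely many minimal supports to fix one subset $I$ working for infinitely many $s$, use minimality to get uniqueness of the (normalized) coefficients, conclude they lie in $\mathcal{M}_m[p^s]$ for arbitrarily large $s$ and hence in $\mathbf{F}$. Your explicit verification that $\bigcap_s\mathcal{M}_m[p^s]=\mathbf{F}$ is a welcome detail that the paper leaves implicit.
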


\begin{proof}
Suppose the lemma is not true.  Then, $f_1,\ldots,f_n$
are linearly dependent over $\mathcal{M}_m[p^s]$ for every $s\ge1.$
For each $s\ge1,$ let $I_s\subset\{1,\ldots,n\}$ be minimal.
Note that each $I_s$ contains at least two indices, otherwise
one of the functions $f_j$ would be identically zero, and hence
the $f_j$ could not be linearly independent over $\mathbf{F}.$
Because there are only finitely many possible subsets $I_s,$
we may assume without loss of generality that
$I_s=\{1,\ldots,t\}$ for infinitely many $s.$
Thus, for infinitely many $s,$ we have that $f_2,\ldots,f_t$ are linearly
independent over $\mathcal{M}_m[p^s]$ and that $f_1$ is in the
$\mathcal{M}_m[p^s]$ linear span of $f_2,\ldots,f_t.$  In other
words, there exist unique $Q_{s,2},\ldots,Q_{s,t}$ in $\mathcal{M}_m[p^s]$
for infinitely many $s$ such that
$$
    f_1=Q_{s,2}f_2+\ldots+Q_{s,t}f_t.
$$
On the other hand, $\mathcal{M}_m[p^{s'}]\subset\mathcal{M}_m[p^s]$
if $s'\ge s,$ so by the linear independence of $f_2,\ldots,f_t,$
the $Q_{s,j}$ do not depend on $s.$  Hence, $Q_{s,j}$ is in
$\mathcal{M}_m[p^s]$ for infinitely many $s,$ and must therefore
be in $\mathbf{F}.$  This contradicts the linear independence of the
$f_j$ over $\mathbf{F}.$
\end{proof}

If $\mathbf{F}$ has positive characteristic $p,$ then we define the
\textbf{index of independence} of a collection $\mathcal{F}$
of entire or meromorphic functions to be the smallest
integer $s$ such that any subset of functions in $\mathcal{F}$
linearly independent over $\mathbf{F}$ remains linearly independent
over $\mathcal{M}_m[p^s],$ provided such an integer exists.
Lemma~\ref{linindep} shows that such an integer always exists
if $\mathcal{F}$ is finite.

\section{ABC Theorems}\label{ABC}

In the three function ABC theorem, one begins with
$f_2=f_0+f_1$ with the $f_i$ relatively prime.
Note that here $\gcd(f_0,f_1,f_2)=1$ implies that the $f_j$
are also \textit{pairwise} relatively prime because by the linear
dependence if two of the functions have a common factor, it divides
the third as well. To generalize to $n+1$ functions, one obviously
wants to consider
$$
    f_n=f_0+\dots+f_{n-1},
$$
or more symmetrically,
$$
    0=f_0+\dots+f_{n}.
$$
Some work on such generalizations, \textit{e.g.,}
\cite{ShapiroSparer}, assumes the rather strong hypothesis
that the $f_j$ are pairwise relatively prime.  Other work,
\textit{e.g.,} \cite{BrownawellMasser}, assumes
$\gcd(f_0,\dots,f_n)=1$ and that for every proper
sub-index set $I\subsetneq \{0,\dots,n\},$
$$
    \sum_{i\in I}f_i \ne 0.
$$
This hypothesis is referred to as ``no vanishing subsums.''
The recent work of De~Bondt generalizes these two hypotheses to
the following: Let \hbox{$0=f_0+\dots+f_n$} and assume that for
each index set $I\subseteq \{0,\dots,n\},$ if
$$
    \sum_{i\in I}f_i =0, \quad\textnormal{then}\quad
    \gcd(\{f_i : i\in I\})=1.
$$
\par
In the three function ABC theorem, the right-hand side of the
ABC inequality involves the radical $R(f_0f_1f_2)$ of the product.
But because the functions are pairwise relatively prime, this
is the same as the product of the radicals:
$R(f_0)R(f_1)R(f_2).$  When one begins with $n+1$ functions
that are not necessarily pairwise relatively prime,
then the square free part of the product, $S(f_0\cdots f_n)$
will not in general be the same as the product of the
square free parts: $S(f_0)\cdots S(f_n).$ Again, we follow
De~Bondt's lead by presenting generalized ABC inequalities
of both types.

The following two theorems are our generalized ABC theorems
for non-Archimedean entire functions of several variables.

\begin{theorem}[Generalized ABC Theorem (First Version)]\label{abcsum}
Let $f_0,\ldots,f_n$ be \hbox{$n\ge2$}
entire functions in $\mathcal{E}_m,$
not all of which are constant and none of which are identically zero.
Assume
\begin{equation}\label{sumzero}
    0=f_0+\ldots+f_n
\end{equation}
and assume that for each index set $I\subseteq\{0,\dots,n\},$
\begin{equation}\label{subsum}
    \text{if}\quad \sum_{i\in I} f_i=0 \quad\text{then}\quad
    \gcd(\{f_i : i \in I\})=1.
\end{equation}
Let $2\le d\le n$ be the dimension of the $\mathbf{F}$ vector space
spanned by the $f_i.$
If $\mathbf{F}$ has characteristic zero, let \hbox{$c=1,$} and if
$\mathbf{F}$ has positive characteristic $p,$ let $c=p^{s-1},$
where $s$ is the index of independence for the $f_i.$
Then, there exist integers $a$ and $b$ with
$$
    1\le a \le c(d-1) \qquad\text{and}\qquad
    b\ge a\left\lceil\frac{a}{c}\right\rceil-
    \frac{\displaystyle\left\lceil\frac{a}{c}\right\rceil
    \left(\left\lceil\frac{a}{c}\right\rceil-1\right)}{2}c\ge a
$$
such that for $r\ge1,$
\begin{equation}\label{abctrsum}
    \max_{0\le j \le n}\log|f_j|_r \le
    \sum_{j=0}^n N^{(a)}_{f_j}(0,r) - b\log r + O(1).
\end{equation}
Moreover, if $\mathrm{char~}\mathbf{F}=p>0$ there further
exists a non-negative integer $\sigma$
with \hbox{$p^\sigma\le a$}
such that for $r\ge1,$
\begin{equation}\label{abctrsumcharp}
    \max_{0\le j \le n}\log|f_j|_r \le
    \sum_{j=0}^n N_{G_j}(0,r)-b\log r + O(1),
\end{equation}
where $G_j=\gcd(f_j,R_{p^\sigma}(f_j)^a).$
\end{theorem}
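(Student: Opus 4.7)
My plan is to adapt the Brownawell--Masser/Hu--Yang generalized Wronskian argument. Hypothesis~(\ref{subsum}) lets me induct on proper vanishing subsums: if $\sum_{i\in I} f_i = 0$ with $I \subsetneq \{0,\ldots,n\}$, the $\{f_i\}_{i\in I}$ are $\gcd$-coprime and I can treat them separately from the complementary subsum $\sum_{i \notin I} f_i = 0$. So I may assume $\sum f_i = 0$ has no proper vanishing subsum, whence $d = n$ and any $n$ of the $f_i$ are $\mathbf{F}$-linearly independent. Lemma~\ref{linindep} upgrades this to $\mathcal{M}_m[p^s]$-independence for $s$ the index of independence (take $c = 1$ in characteristic zero and $c = p^{s-1}$ otherwise). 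Apply Theorem~\ref{gwronsk} to $f_1, \ldots, f_n$: this produces multi-indices $\gamma^0 = 0, \gamma^1, \ldots, \gamma^{n-1}$ with $|\gamma^i| - |\gamma^{i-1}| \le c$ and a nonzero generalized Wronskian
$$W \;=\; \det\!\bigl(D^{\gamma^i}\!f_j\bigr)_{0 \le i \le n-1,\; 1 \le j \le n}.$$
Using $\sum f_i = 0$ and multilinearity, replacing any column $f_k$ by $f_0$ changes $W$ only by a sign, so I get $n+1$ equivalent representations of $W$, each omitting one $f_k$.

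The upper bound comes from Lemma~\ref{HasseLDL}: each entry satisfies $|D^{\gamma^i}\!f_j|_r \le |f_j|_r/r^{|\gamma^i|}$, so every representation yields
$$|W|_r \cdot |f_k|_r \cdot r^b \;\le\; \prod_{j=0}^n |f_j|_r \qquad(k = 0, \ldots, n),$$
where $b := |\gamma^0| + \cdots + |\gamma^{n-1}|$. For the matching lower bound on $|W|_r$, set $a := |\gamma^{n-1}|$; the constraints give $a \le c(n-1) = c(d-1)$. By Proposition~\ref{divDgamma}, if an irreducible $P$ divides $f_j$ with multiplicity $e_j$ then $P^{\max(0,\, e_j - a)}$ factors from column $j$. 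The gcd hypothesis $\gcd(f_0,\ldots,f_n) = 1$ ensures that for every $P$ some $e_k = 0$, and the representation that puts $f_0$ in column $k$ then shows
$$\operatorname{ord}_P(W) \;\ge\; \sum_{j=0}^n \max(0,\, e_j - a).$$
Hence $\prod_{j=0}^n f_j/\gcd(f_j, S(f_j)^a)$ divides $W$ in $\mathcal{E}_m$; Corollary~\ref{factorsmaller} together with the Poisson--Jensen identity~(\ref{poisson}) converts this to
$$-\log|W|_r \;\le\; \sum_{j=0}^n N^{(a)}_{f_j}(0,r) - \sum_{j=0}^n \log|f_j|_r + O(1),$$
and substituting into the upper bound produces~(\ref{abctrsum}). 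The lower bound on $b$ follows by summing $|\gamma^i| \ge \max(0,\, a - (n-1-i)c)$ for $i = 0, \ldots, n-1$.

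To obtain~(\ref{abctrsumcharp}) in characteristic $p$, choose $\sigma$ with $p^\sigma \le a < p^{\sigma+1}$; then for any irreducible $P$ dividing some $f_j$ with multiplicity $e_j$ divisible by $p^{\sigma+1}$, we have $p^{\sigma+1} > a \ge \max_\ell \gamma^i_\ell$ for every $i$, and the ``moreover'' clause of Proposition~\ref{divDgamma} gives that $P^{e_j}$ (not merely $P^{\max(0,\, e_j - a)}$) divides every entry of column $j$. Such $P$ therefore contribute their full multiplicity to $W$ and can be discarded from the truncated count, which exactly replaces $\gcd(f_j, S(f_j)^a)$ by $G_j = \gcd(f_j, R_{p^\sigma}(f_j)^a)$. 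The principal obstacle I anticipate is verifying that the inductive reduction of the opening paragraph, together with the Lemma~\ref{bm} bookkeeping when proper vanishing subsums force one to recombine local Wronskians of several piece-sizes, actually preserves the claimed global bound $a \le c(d-1)$; careful tracking of which coprimality is used in each sub-induction and of how the index of independence behaves under restriction to sub-collections will be the technical crux.
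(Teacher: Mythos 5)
Your outline reproduces the right estimates for the special case it actually covers, but it contains a genuine gap at the very first step. After reducing to the case of no proper vanishing subsum, you assert ``whence $d=n$ and any $n$ of the $f_i$ are $\mathbf{F}$-linearly independent.'' This is false: the no-vanishing-subsum condition only excludes linear relations whose coefficients are all equal to $1$ on some subset of the indices. It does not exclude relations such as $f_1=2f_2$ (characteristic $\ne 2$), so one can perfectly well have $d<n$ with no vanishing proper subsum. When $d<n$, the functions $f_1,\dots,f_n$ are linearly dependent over $\mathbf{F}$, hence over $\mathcal{M}_m[p^s]$, and the single $n\times n$ generalized Wronskian $W$ on which your entire argument rests is identically zero; Theorem~\ref{gwronsk} simply does not apply. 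This is precisely the situation that Lemma~\ref{bm} (Brownawell--Masser) exists to handle: it is invoked \emph{after} the reduction to no vanishing subsums, to partition $\{0,\dots,n\}$ into minimal linearly dependent subsets $I_0, I_1\cup J_0,\dots$, each of which supports its own nonvanishing block Wronskian $W_j$. The paper's proof then works with the product $\Delta_0=C_0W_0\cdots W_{u-1}$ (arising as a maximal minor of an $n\times(n+1)$ matrix built from the dependence relations), and bounds each $|f_i|_r$ by chaining logarithmic-derivative quotients through the linking sets $J_j$. You mention the ``Lemma~\ref{bm} bookkeeping'' only as an anticipated obstacle tied to vanishing subsums, which mislocates it: it is needed exactly when there are \emph{no} vanishing subsums but $d<n$, and it is the core of the proof rather than a peripheral verification. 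Your bound $a\le c(n-1)$ also silently becomes $a\le c(d-1)$ only because of the erroneous identification $d=n$; in the paper the bound $|\gamma^{n-1}|\le c(d-1)$ comes from the fact that each minimal block has at most $d+1$ (resp.\ $d$) elements. The remaining ingredients of your sketch --- the $n+1$ representations of the Wronskian, the divisibility of $W$ by $\prod_j f_j/\gcd(f_j,S(f_j)^a)$, the arithmetic for $b$, and the choice of $\sigma$ for the characteristic-$p$ refinement --- are sound and match the paper's, but only once the multi-block structure is in place.
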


\begin{theorem}[Generalized ABC Theorem (Second Version)]\label{abcprod}
Let $f_0,\ldots,f_n$ be $n\ge2$ entire functions in $\mathcal{E}_m,$
not all of which are constant and none of which are identically zero.
Assume
\begin{equation*}
    0=f_0+\ldots+f_n.
\end{equation*}
Let $2\le d\le n$ be the dimension of the $\mathbf{F}$ vector space
spanned by the $f_i.$ Let $2\le k \le n$ and assume that for every
set of $k$ distinct indices
\begin{equation}\label{krelprime}
    0 \le i_1 < i_2 < \dots < i_k \le n,
    \quad\text{we have}\quad
    \gcd(f_{i_1},\dots,f_{i_k})=1.
\end{equation}
Let $\bar k=\min\{k,d\}.$
If $\bar k>2,$ further assume

\begin{equation}\label{nosubsum}
    \sum_{i\in I} f_i \ne 0 \qquad\text{for each proper index set~}
    I\subsetneq\{0,\dots,n\}.
\end{equation}
If $\mathbf{F}$ has characteristic zero, let \hbox{$c=1,$} and if
$\mathbf{F}$ has positive characteristic $p,$ let $c=p^{s-1},$
where $s$ is the index of independence for the $f_i.$
Then, there exist integers $\bar a$ and $b$ with
$$
    1 \le \bar a \le c \sum_{i=1}^{\bar k -1}(d-i)
    \qquad\text{and}\qquad \bar a \le b
$$
such that for $r\ge1,$
\begin{equation}\label{abctrprod}
    \max_{0\le j \le n}\log|f_j|_r \le
    N^{(\bar a)}_F(0,r) - b \log r +O(1),
\end{equation}
where $F=f_0\cdots f_n.$
\end{theorem}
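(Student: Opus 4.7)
My plan is to adapt the Brownawell-Masser generalized Wronskian argument, as used by Hu and Yang in \cite{HuYangBB}, to the several-variable non-Archimedean setting with Hasse derivatives. First, I apply Lemma~\ref{linindep} to obtain an index of independence $s$, so that $c = p^{s-1}$ (or $c = 1$ in characteristic zero) is well-defined. When $\bar k > 2$, hypothesis~(\ref{nosubsum}) together with Lemma~\ref{bm} asserts that $\{0,\dots,n\}$ is itself minimal; when $\bar k = 2$, I first decompose the relation via Lemma~\ref{bm} into minimal sub-relations and argue on each one separately. After reordering, assume $f_0,\dots,f_{d-1}$ are linearly independent over $\mathbf{F}$, with each $f_j$ for $j \ge d$ a fixed $\mathbf{F}$-linear combination of them.

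The next step is to invoke Theorem~\ref{gwronsk} to choose multi-indices $\gamma^0 = 0, \gamma^1, \dots, \gamma^{d-1}$ with $|\gamma^\ell| \le |\gamma^{\ell-1}| + c$ for which
$$W = \det\bigl(D^{\gamma^\ell}\! f_i\bigr)_{0 \le \ell, i \le d-1}$$
is not identically zero. Because substituting any other linearly independent $d$-tuple $f_{i_0}, \dots, f_{i_{d-1}}$ in place of $f_0,\dots,f_{d-1}$ only multiplies $W$ by a nonzero scalar coming from a change-of-basis determinant in $\mathbf{F}$, the same $W$ governs every such choice. The Hasse logarithmic derivative Lemma~\ref{HasseLDL} and the ultrametric estimate for determinants then give
$$|W|_r \le r^{-b}\prod_{i \in I}|f_i|_r \qquad\text{with}\qquad b = \sum_{\ell=1}^{d-1}|\gamma^\ell|,$$
for every such linearly independent index set $I$. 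Taking $I$ to exclude an index $j_0$ that maximizes $|f_{j_0}|_r$ (which the relation $\sum f_i = 0$ permits by letting me swap a chosen $f_{i_0}$ with a different $f_{j_0}$ while preserving linear independence), I obtain an inequality of the form $|W|_r \cdot r^b \cdot \max_j |f_j|_r \le |F|_r + O(1)$, up to a bounded multiplicative constant.

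The main obstacle will be the matching divisibility lower bound. For each irreducible $P \in \mathcal{E}_m$, let $e_i$ denote the multiplicity of $P$ in $f_i$. Hypothesis~(\ref{krelprime}) forces at most $k-1$ of the $e_i$ to be positive, and since $P$ cannot divide every element of a basis for the span of the $f_j$, at most $\bar k - 1 = \min\{k-1,d-1\}$ of the chosen $f_{i_j}$ are $P$-divisible. Using Proposition~\ref{divDgamma} and the Leibniz expansion of $W$, with the optimal matching of large $e_{i_j}$ to large $|\gamma^\ell|$, I extract
$$\mathrm{ord}_P W \ge \sum_{j}\max\bigl(0,\,e_{i_j} - |\gamma^{d-1}|\bigr),$$
so that setting $\bar a = |\gamma^{d-1}|$ (which satisfies $\bar a \le (d-1)c \le c\sum_{i=1}^{\bar k -1}(d-i)$ and automatically gives $b \ge \bar a$), the quotient $F/\gcd(F, S(F)^{\bar a})$ divides $W$ up to a unit. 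In positive characteristic this step requires careful use of the square-free part $S(f)$ from Theorem~\ref{sqfree} and of the higher radicals $R_{p^s}$ from Proposition~\ref{pradical}, so that irreducible factors occurring to multiplicities divisible by $p^s$ are not silently lost when applying $D^\gamma$. Translating the divisibility into a counting-function inequality through Proposition~\ref{countingfactorsmaller} and the Poisson-Jensen-Green formula~(\ref{poisson}) yields
$$\log|F|_r - N^{(\bar a)}_F(0,r) \le \log|W|_r + O(1),$$
and combining this with the upper bound on $|W|_r$ from the previous paragraph delivers~(\ref{abctrprod}). The principal technical difficulty lies in the $p$-adic bookkeeping in the divisibility step: repeated applications of Proposition~\ref{Hasseprops}~(iv) and a careful tracking of which irreducibles survive each $D^{\gamma^\ell}$ are needed to ensure the truncated counting function $N^{(\bar a)}_F$ accurately captures the required divisibility of $W$, while simultaneously matching the combinatorial bound $\bar a \le c\sum_{i=1}^{\bar k -1}(d-i)$.
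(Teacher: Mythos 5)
There is a genuine gap, and it is structural rather than a matter of $p$-adic bookkeeping. First, your reduction rests on a misreading of Lemma~\ref{bm}: the no-vanishing-subsums hypothesis~(\ref{nosubsum}) does \emph{not} imply that $\{0,\dots,n\}$ is minimal in the Brownawell--Masser sense, since minimality requires every proper subset of the $f_i$ to be linearly independent over $\mathbf{F}$, which fails whenever $d<n$ (e.g.\ $1+z+z+(-1-2z)=0$ has no vanishing proper subsum, yet $\{z,z\}$ is dependent). The whole point of Lemma~\ref{bm} is to supply the partition $\{0,\dots,n\}=I_0\cup\dots\cup I_{u-1}$ with linking sets $J_\ell$ making $I_0$ and $I_j\cup J_{j-1}$ minimal; the paper needs this full partition even under~(\ref{nosubsum}). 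It then forms the $n\times(n+1)$ matrix $M$ whose rows encode the relations $\sum_i c_{j,i}D^{\gamma^{j,q}}\!f_i=0$, and works with the maximal minor $\Delta_0=C_0W_0\cdots W_{u-1}$, a product of one block Wronskian per minimal set; since every row of $M$ sums to zero, $\Delta_i=\pm\Delta_0$ for all $i$, so this product can be arranged to omit any one prescribed function.

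Second, and decisively, your single $d\times d$ Wronskian $W$ on a basis of the span cannot yield the divisibility lower bound. $W$ involves only $d$ of the $n+1$ functions, while $\mathrm{ord}_P F=\sum_{i=0}^{n}e_i$ may receive contributions from up to $k-1$ of them; the multiplicities carried by the functions outside the chosen basis (and, even after optimizing the basis, by all but $d$ of them at a time) simply do not enter your estimate $\mathrm{ord}_P W\ge\sum_j\max(0,e_{i_j}-|\gamma^{d-1}|)$, so $F/\gcd(F,S(F)^{\bar a})$ need not divide $W$ and the inequality $\log|F|_r-N^{(\bar a)}_F(0,r)\le\log|W|_r+O(1)$ fails. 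The paper escapes this because $\Delta_0$ involves $n$ of the $n+1$ functions --- all but one, and that one can be chosen with $P\nmid f_i$ --- so Lemma~\ref{wronsktrunc} applies block by block. Relatedly, even restricted to the basis, your truncation level $\bar a=|\gamma^{d-1}|$ is too small when $\bar k>2$: since up to $\bar k-1$ of the relevant functions may be divisible by $P$, the truncation must be a \emph{sum} of the $\bar k-1$ largest $|\gamma^i|$, which is exactly the paper's choice $\bar a=\sum_{i=1}^{\bar k-1}|\gamma^{n-i}|$ in~(\ref{abar}).
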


We remark that given explicit $f_i,$ the constants $a,$ $b,$ $\sigma,$
and $\bar a$ in Theorems~\ref{abcsum} and~\ref{abcprod}
can be determined explicitly in terms of non-vanishing generalized
Wronskians of subsets of the $f_i,$
as will be evident from the proof; see~(\ref{a}), (\ref{b}),
(\ref{abar}), and (\ref{sigma}).

Trivial examples
in positive characteristic show that the dependence on the
index of independence $s$ cannot be removed.

In characteristic zero and one variable, Theorems~\ref{abcsum}
and~\ref{abcprod} are due to Hu and Yang \cite{HuYangBB}.  Below, we
will give their proof, which in turn closely follows Brownawell and
Masser \cite{BrownawellMasser}, and simply observe that it works,
given the proper set-up, just as well for several variables and in
positive characteristic. That it is natural to express the upper
bounds on $a$ and $\bar a$ in terms of $d$ is an observation of
Zannier \cite{Zannier}. For polynomials of several variables,
De~Bondt \cite{deBondt} gave an alternative Wronskian based proof
that first reduces to the case of one variable polynomials by
generic specialization but then proves the one variable case by
introducing extra variables to force linear independence, thereby
avoiding Lemma~\ref{bm}.

Before giving the proof of Theorems~\ref{abcsum} and~\ref{abcprod},
we discuss how to
derive from it various other existing results in the literature.
First observe that in characteristic zero, $b\ge a(a+1)/2$ and
so~(\ref{abctrsum}) implies
$$
    \max_{0\le j \le n}\log|f_j|_r \le
    \sum_{j=0}^n N^{(a)}_{f_j}(0,r) - \frac{a(a+1)}{2}\log r + O(1),
$$
which specializing to complex polynomials gives us
\begin{corollary}[{\cite[Th.~2.1~(4)]{deBondt}}]\label{deBondtFour}
Let $f_0,\dots,f_n$ be polynomials of several complex variables
satisfying the hypothesis of the theorem. Then,
$$
    \max \deg f_j \le \sum_{j=0}^n r_a(f_j) - \frac{a(a+1)}{2},
$$
where $r_a(f_j)=\deg\gcd(f_j,R(f_j)^a)$ and $a$ is as in the theorem.
\end{corollary}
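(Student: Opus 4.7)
The plan is to apply Theorem~\ref{abcsum} directly to the polynomials $f_0,\ldots,f_n$ and translate the non-Archimedean inequality into a degree inequality. The hypotheses of Corollary~\ref{deBondtFour} match those of Theorem~\ref{abcsum} specialized to polynomials; the only subtlety is that Theorem~\ref{abcsum} is formally stated for entire functions over a non-Archimedean algebraically closed field $\mathbf{F}$, whereas here the coefficients lie in $\mathbb{C}$. Since $\mathbb{C}$ is abstractly isomorphic to $\mathbb{C}_p$ (both are algebraically closed of characteristic zero and cardinality $2^{\aleph_0}$) and every quantity appearing in the corollary (degrees, gcds, irreducible factors, radicals, truncated radicals) is intrinsic to the algebraic structure of the polynomial ring, the polynomial case transfers freely between the two settings. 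Alternatively, one could just observe that the proof of Theorem~\ref{abcsum}, when specialized to polynomials, uses only algebra that works over any field of characteristic zero.

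The key translation from analytic to degree-theoretic data is the asymptotic
$$
    \log|f|_r = (\deg f)\log r + O(1) \qquad\text{as~} r\to\infty,
$$
valid for any nonzero polynomial $f$, as noted in Section~\ref{prelim}. Applied to $\gcd(f_j, R(f_j)^a)$, which is itself a polynomial of degree $r_a(f_j)$, this asymptotic combined with~(\ref{poisson}) and the characteristic-zero definition $N^{(a)}_{f_j}(0,r) = N_{\gcd(f_j,R(f_j)^a)}(0,r)$ yields
$$
    N^{(a)}_{f_j}(0,r) = r_a(f_j)\log r + O(1).
$$
Similarly, $\max_j \log|f_j|_r = (\max_j \deg f_j)\log r + O(1)$. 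Substituting into the master inequality~(\ref{abctrsum}), dividing through by $\log r$, and letting $r\to\infty$, I obtain
$$
    \max_j \deg f_j \le \sum_{j=0}^n r_a(f_j) - b.
$$

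To finish, I would unpack the constant $b$ from Theorem~\ref{abcsum}. In characteristic zero $c=1$, so $\lceil a/c\rceil = a$, and the general lower bound on $b$ reduces to
$$
    b \ge a\cdot a - \frac{a(a-1)}{2} = \frac{a(a+1)}{2},
$$
which plugs directly into the preceding inequality to give the stated bound. There is no serious obstacle: Theorem~\ref{abcsum} carries the entire analytic burden, the polynomial asymptotic $\log|f|_r = d\log r + O(1)$ is immediate from the definition of $|~|_r$, and the field-theoretic transfer from $\mathbb{C}$ to a non-Archimedean field is purely formal because the conclusion is an algebraic statement about polynomial degrees. The only point that deserves a sanity check is that the characteristic-zero definition of $N^{(a)}_{f_j}$ really does collapse to $r_a(f_j)\log r + O(1)$, but this is immediate from Proposition~\ref{radical}(iii) and the polynomial degree asymptotic.
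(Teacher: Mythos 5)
Your proposal is correct and follows essentially the same route as the paper: the paper derives this corollary by noting that in characteristic zero $c=1$ forces $b\ge a(a+1)/2$ in Theorem~\ref{abcsum}, and then specializes~(\ref{abctrsum}) to polynomials via $\log|f|_r=(\deg f)\log r+O(1)$. You merely spell out the transfer from $\mathbb{C}$ to a non-Archimedean algebraically closed field of characteristic zero, which the paper leaves implicit in the phrase ``specializing to complex polynomials.''
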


Incorporating an idea of Bayat and Teimoori \cite{BayatTeimoori}
as in De~Bondt \cite{deBondt}, we get

\begin{corollary}\label{deBondtFive}
Assume $\mathbf{F}$ has characteristic zero,
let \hbox{$f_0+\dots+f_n=0$} be as in Theorem~\ref{abcsum} and let $d$
be the dimension of the $\mathbf{F}$ vector space spanned by
the $f_j.$
Let $C$ be the number of $f_j$ which are constant functions.
For any $A$ with $d\le A \le n-C,$
$$
    \max_{0\le j \le n}\log|f_j|_r \le
    A\left(\sum_{j=0}^n N^{(1)}_{f_j}(0,r)-\frac{A+1}{2}\log r\right)
    +O(1)
$$
for $r\ge1.$
\end{corollary}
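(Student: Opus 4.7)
The plan is to reduce to Theorem~\ref{abcsum} by two standard tricks: bounding the $a$-truncated counting functions $N^{(a)}_{f_j}$ by $a\cdot N^{(1)}_{f_j}$, and then exploiting the nonconstancy of sufficiently many of the summands to absorb the quadratic-in-$A$ term on the right-hand side.

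First, since $\mathrm{char}\,\mathbf{F}=0$, I would apply Theorem~\ref{abcsum} with $c=1$ to obtain integers $a,b$ with $1\le a\le d-1$ and $b\ge a(a+1)/2$ such that
\[
    \max_j \log|f_j|_r \le \sum_{j=0}^n N^{(a)}_{f_j}(0,r) - \tfrac{a(a+1)}{2}\log r + O(1).
\]
In characteristic zero one has $S(f_j)=R(f_j)$, and $\gcd(f_j,R(f_j)^a)$ divides $R(f_j)^a$, so Proposition~\ref{countingfactorsmaller} gives $N^{(a)}_{f_j}(0,r)\le a\cdot N^{(1)}_{f_j}(0,r)$ for $r\ge 1$. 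Substituting yields a bound of the form claimed, but with $a$ in place of $A$; since the theorem's $a$ satisfies $a\le d-1<d\le A$, the remaining task is to promote the index from $a$ to $A$.

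For this, writing $S=\sum_{j=0}^n N^{(1)}_{f_j}(0,r)$ and $L=\log r$, a one-line algebraic identity gives
\[
    A\bigl(S-\tfrac{A+1}{2}L\bigr)-a\bigl(S-\tfrac{a+1}{2}L\bigr) = (A-a)\bigl(S-\tfrac{A+a+1}{2}L\bigr),
\]
and the factor $A-a$ is a positive integer. The crux is to show $S-\frac{A+a+1}{2}L\ge -O(1)$ for $r\ge 1$. For this I would use that each of the $n+1-C$ nonconstant $f_j$ has nonconstant radical $R(f_j)$ by Proposition~\ref{radical}(iii) (in characteristic zero, every irreducible factor of $f_j$ appears in $R(f_j)$), so~(\ref{Nlogr}) gives $N^{(1)}_{f_j}(0,r)\ge \log r+O(1)$. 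Summing over the nonconstant summands yields $S\ge(n+1-C)L+O(1)$, and the hypothesis $A\le n-C$ forces the coefficient $(n+1-C)-\tfrac{A+a+1}{2}$ to be strictly positive, so the required estimate on $S-\frac{A+a+1}{2}L$ follows from $L\ge 0$ for $r\ge 1$.

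I do not anticipate a serious obstacle; every ingredient is already in place, and the argument is essentially the Bayat-Teimoori/De~Bondt maneuver mentioned in the introduction. The only mild subtlety is that the function $t\mapsto tS-\tfrac{t(t+1)}{2}L$ must be nondecreasing in $t$ on the interval $[a,A]$, which is precisely what the hypothesis $A\le n-C$ (rather than simply $A\le n$) is engineered to guarantee.
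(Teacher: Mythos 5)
Your argument is correct and follows essentially the same route as the paper: bound $N^{(a)}_{f_j}$ by $aN^{(1)}_{f_j}$, then use the lower bound $\sum_j N^{(1)}_{f_j}(0,r)\ge(n+1-C)\log r+O(1)$ coming from the $n+1-C$ nonconstant summands to show the right-hand side is nondecreasing as the truncation parameter moves from $a$ up to $A\le n-C$. Your explicit factorization $(A-a)\bigl(S-\tfrac{A+a+1}{2}L\bigr)$ is just a concrete rendering of the paper's observation that the quadratic in $a$ is increasing on the relevant range, so no substantive difference.
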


De~Bondt gives examples that show that $A$ cannot be made smaller
than $d$ in Corollary~\ref{deBondtFive}.

In the case of polynomials, Corollary~\ref{deBondtFive}
gives \cite[Th.~2.1~(5)]{deBondt},
which implies \cite[Th.~5]{BayatTeimoori} as explained in
\cite{deBondt}. As remarked by De~Bondt, the proof of
\cite[Th.~5]{BayatTeimoori} given by Bayat and Teimoori
in \cite{BayatTeimoori} is not correct
for polynomials of several variables because their Lemma~4 is easily
seen to be false for several variable polynomials. However, arguing
as in \cite{deBondt}, their Theorem~5 is correct, even for several
variables. Of course, this also recovers the result
of Shapiro and Sparer \cite{ShapiroSparer}.

\begin{proof}[Proof of Corollary~\ref{deBondtFive}] Clearly
(by Proposition~\ref{countingfactorsmaller}),
$$
    N^{(a)}_{f_j}(0,r) \le a N^{(1)}_{f_j}(0,r),
$$
so from~(\ref{abctrsum}), we get
$$
    \max_{0\le j \le n}\log|f_j|_r \le
    a\left(\sum_{j=0}^n N^{(1)}_{f_j}(0,r) - \frac{a+1}{2}\log r\right)
+ O(1)
$$
for $r\ge1.$  The observation of Bayat and Teimoori is that the
expression on the right is quadratic in $a$ and hence increasing
in $a$ provided
$$
    a + \frac{1}{2} \le \frac{1}{\log r}\sum_{j=0}^n N^{(1)}_{f_j}(0,r).
$$
Because
$$
    N_{f_j}^{(1)}(0,r)\ge \log r + O(1)
$$
for any non-constant $f_j$ and because there are $n+1-C$
non-constant $f_j,$ we have that
$$
    \sum_{j=0}^n N^{(1)}_{f_j}(0,r) \ge (n+1-C)\log r + O(1)
    \ge (n-C)\log r
$$
for $r$ sufficiently large, and so
we can increase $a$ up to $n-C$ and the inequality
will hold for all sufficiently large $r.$  We can then adjust the
$O(1)$ term to make the inequality hold for $r\ge1.$
\end{proof}

We now digresss a little bit to discuss one slightly subtle
difference between entire functions and polynomials.
The astute reader will notice that we assumed no vanishing subsums,
\textit{i.e.,}~(\ref{nosubsum}), in Theorem~\ref{abcprod}, whereas
De~Bondt assumed the weaker hypothesis~(\ref{subsum}) in both versions
of his ABC theorems.  In the case of complex polynomials, the inequality
in De~Bondt's work that corresponds to our inequality~(\ref{abctrprod})
is
$$
    \max \deg f_j \le \deg\gcd(F,R(F)^{\bar a})-b
    \le \deg\gcd(F,R(F)^{\bar a})-\bar a,
$$
and this holds even if the hypothesis of no vanishing subsums~(\ref{nosubsum})
is weakened to~(\ref{subsum}).  The reason for this is that in the
case of polynomials, one of the polynomials $f_{j_0}$ has maximal degree.
Thus, one need only consider a minimal index set $I$ such that $j_0$
is contained in $I$ and such that
$$
    \sum_{j\in I}f_j=0.
$$
However, in the case of entire functions, it need not be the case
that there is a fixed index $j_0$ such that for all $r$ sufficiently
large
$$
    |f_{j_0}|_r = \max_{0\le j \le n} |f_j|_r.
$$
Thus, if one replaces the hypothesis~(\ref{nosubsum})
in Theorem~\ref{abcprod} with the weaker hypothesis~(\ref{subsum}),
it follows from~(\ref{abctrprod}) and the fact that
for a positive integer $\ell,$ for $r\ge1$ and $F$ and $G$ entire functions
$$
    \max\{N_F^{(\ell)}(0,r),N_G^{(\ell)}(0,r)\}\le N_{FG}^{(\ell)}(0,r)
$$
that for each index $j$ in $\{0,\dots,n\},$ there exist integers
$\bar a_j$ and $b_j$ with
$$
    1 \le \bar a_j \le c\sum_{i=1}^k(d-i)
    \qquad\text{and}\qquad \bar a_j \le b_j
$$
such that for $r\ge1,$
$$
    \log |f_j|_r \le N_{F}^{(\bar a_j)}(0,r) - b_j \log r + O(1),
$$
from which it follows that
$$
    \max_{0\le j \le n} \log |f_j|_r \le
    N_{F}^{(\max \bar a_j)}(0,r) - (\min b_j) \log r +O(1).
$$
Of course, it need not be that $\min b_j \ge \max \bar a_j,$ and thus
when subsums of the $f_j$ may vanish,
it is not clear for entire functions whether one can choose the same
constant at which multiplicities are truncated when counting the zeros of
$F$ as the coefficient in front of $-\log r.$  Whether that can be done
is a somewhat interesting question, because if it can be done, then
the proof cannot be a straightforward generalization of the
existing polynomial proof.  If it cannot be done, then this would be
an example where a polynomial inequality does not completely generalize
to an analogous inequality for entire functions.

\begin{corollary}\label{abcsf}
With hypotheses and notation as in Theorem~\ref{abcprod}, we have
for $r\ge1$ and any
$$
    \bar A \ge c\sum_{i=1}^{\bar k -1}(d-i),
$$
that
$$
    \max_{0\le j \le n}\log|f_j|_r \le
    \bar A \left(N^{(1)}_F(0,r)-\log r\right)+O(1).
$$
Moreover, the above inequality remains valid if the
hypothesis~(\ref{nosubsum}) is weakened to hypothesis~(\ref{subsum}).
\end{corollary}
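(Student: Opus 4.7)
The plan is to derive this from Theorem~\ref{abcprod} using the Bayat--Teimoori trick illustrated in the proof of Corollary~\ref{deBondtFive}. Applying Theorem~\ref{abcprod} yields integers $\bar a \le c\sum_{i=1}^{\bar k-1}(d-i)$ and $b \ge \bar a$ with
$$
\max_{0 \le j \le n} \log|f_j|_r \le N_F^{(\bar a)}(0,r) - b\log r + O(1).
$$
Since $\gcd(F, S(F)^{\bar a})$ divides $S(F)^{\bar a}$, Proposition~\ref{countingfactorsmaller} gives $N_F^{(\bar a)}(0,r) \le \bar a\, N_F^{(1)}(0,r)$, and substituting and rearranging produces
$$
\max_{0 \le j \le n} \log|f_j|_r \le \bar a\bigl(N_F^{(1)}(0,r) - \log r\bigr) + (\bar a - b)\log r + O(1).
$$
For $r \ge 1$ the middle term is non-positive because $\bar a \le b$, so it may be absorbed into the $O(1)$.

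To raise the coefficient from $\bar a$ to any $\bar A \ge c\sum_{i=1}^{\bar k-1}(d-i)$, I would observe that since $F$ is non-constant (not all $f_j$ are constant), so is $S(F)$, and hence $N_F^{(1)}(0,r) = N_{S(F)}(0,r) \ge \log r + O(1)$ by inequality~(\ref{Nlogr}). Therefore $(\bar A - \bar a)\bigl(N_F^{(1)}(0,r) - \log r\bigr) \ge -O(1)$, allowing the coefficient to be enlarged to $\bar A$ at the cost of absorbing a constant into the error term. This proves the main statement.

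For the "moreover" clause, I follow the author's discussion preceding the statement: for each $j \in \{0,\dots,n\}$ choose a minimal $I_j \ni j$ with $\sum_{i \in I_j} f_i = 0$. By minimality no proper subsum of $\{f_i : i \in I_j\}$ vanishes, and (\ref{subsum}) applied to $I_j$ gives $\gcd(\{f_i : i \in I_j\}) = 1$; combined with (\ref{krelprime}) restricted to $I_j$, these are exactly the hypotheses needed to apply Theorem~\ref{abcprod} to $\{f_i : i \in I_j\}$. The span dimension $d_j \le d$ and the effective $\bar k_j \le \bar k$ ensure that the resulting constants $\bar a_j$, $b_j$ still satisfy $\bar a_j \le c\sum_{i=1}^{\bar k-1}(d-i)$ and $b_j \ge \bar a_j$. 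Setting $F_j = \prod_{i \in I_j} f_i$, Proposition~\ref{countingfactorsmaller} gives $N_{F_j}^{(\bar a_j)}(0,r) \le N_F^{(\bar a_j)}(0,r)$, and the earlier manipulation then yields $\log|f_j|_r \le \bar A\bigl(N_F^{(1)}(0,r) - \log r\bigr) + O(1)$ for each individual $j$; taking the maximum over $j$ concludes. The only non-routine point to verify is that Theorem~\ref{abcprod} genuinely applies to each subfamily $\{f_i : i \in I_j\}$ and that its output constants are controlled by those of the full family, but I do not expect a serious obstacle beyond this bookkeeping; the crucial feature that makes the argument go through, in contrast with the subtlety flagged by the author, is that the coefficients of $N_F^{(1)}(0,r)$ and of $\log r$ in the final inequality are the same, so per-index bounds may be combined trivially.
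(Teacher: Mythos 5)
Your proposal is correct and follows essentially the same route as the paper: the same chain $N^{(\bar a)}_F(0,r)\le \bar a\,N^{(1)}_F(0,r)$, the same use of the lower bound on $N^{(1)}_F(0,r)-\log r$ (valid since $F$ is non-constant) to enlarge the coefficient to $\bar A$, and for the ``moreover'' clause the same decomposition into minimal vanishing subsums combined via $N^{(1)}_{F_I}(0,r)\le N^{(1)}_F(0,r)$, with the key point, as you note, being that the coefficients of $N^{(1)}_F(0,r)$ and of $\log r$ coincide. The one detail you gloss over is that Theorem~\ref{abcprod} cannot literally be applied to a minimal vanishing subsum with only two elements (it requires $n\ge2$); under hypothesis~(\ref{subsum}) such a subsum consists of constants, for which the bound is trivial, and the paper handles this by explicitly discarding all-constant subsums.
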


\begin{remark*}The $O(1)$ term may depend on $\bar A.$
\end{remark*}

\begin{proof}
Under the hypothesis~(\ref{nosubsum}),
we have
$$
    \max_{0\le j \le n}\log|f_j|_r \le
    N^{(\bar a)}_F(0,r) - \bar a \log r +O(1),
$$
which follows immediately from~(\ref{abctrprod})
because $\bar a \le b.$ Because
$$
    N^{(\bar a)}_F(0,r)\le \bar a N^{(1)}_F(0,r)
$$
and because
$$
    N^{(1)}_F(0,r)-\log r
$$
is bounded below for $r\ge1$ since $F$ is non-constant, we have
$$
    N^{(\bar a)}_F(0,r) - \bar a \log r
    \le \bar a \left(N^{(1)}_F(0,r)-\log r\right)
    \le \bar A \left(N^{(1)}_F(0,r)-\log r\right)+O(1),
$$
which gives the corallary when there are no vanishing subsums.
However, even if there are vanishing subsums, the functions can be
grouped into vanishing subsums
$$
    \sum_{i\in I} f_i = 0
$$
with no vanishing sub-subsums.
Any vanishing subsum consisting of all constants can be thrown out.
Letting
$$
    F_I=\prod_{i \in I} f_i,
$$
we have
$$
    \max_{j \in I} \log |f_j|_r \le \bar A\left(N^{(1)}_{F_I}(0,r)-\log r
    \right)+O(1).
$$
Because $\bar A$ was chosen independent of $I$ and because
$$
    \max_{I} N^{(1)}_{F_I}(0,r) \le N^{(1)}_F(0,r) \qquad\text{for~}r\ge1,
$$
the corollary follows in general.
\end{proof}

If $\mathbf{F}$ has characteristic zero and in the case
of polynomials when $k=n,$ Corollary~\ref{abcsf} is
\cite[Th.~2.2~(7)]{deBondt}. When $k=3,$
then we can take $\bar A= 2n-3$
and so we recover the main result
of Quang and Tuan in \cite{QuangTuan}:
\begin{equation}\label{BB}
    \max_{0\le j \le n} \deg f_j \le (2n-3)[\deg R(F)-1]
\end{equation}
if $\gcd(f_{i_1},f_{i_2},f_{i_3})=1$ for all triples
of indices $i_1<i_2<i_3.$ Note that Quang and Tuan
neglected the necessary hypothesis that the functions in
any vanishing subsum be relatively prime, \textit{i.e.,}
hypothesis~(\ref{subsum}).
We also remark that Browkin and Brzezinski \cite{BrowkinBrzezinski}
conjectured that~(\ref{BB}) remains true
for one variable polynomials in characteristic zero
if the $\gcd$ hypothesis
is relaxed to $\gcd(f_0,\ldots,f_n)=1$ and no vanishing subsums.
This conjecture
seems to be out of reach of the current Wronskian based proofs.

Fundamental to the proof of Theorems~\ref{abcsum}
and~\ref{abcprod} are the following lemmas
about generalized Wronskians.

\begin{lemma}\label{divW}
Let $f_0,\ldots,f_{n-1}$ be entire functions in $\mathcal{E}_m.$
Let
$$
    \gamma^1=(\gamma_1^1,\dots,\gamma_m^1),\dots,
    \gamma^{n-1}=(\gamma_1^{n-1},\dots,\gamma_m^{n-1})
$$
be multi-indices
with $|\gamma^1|\le\ldots\le|\gamma|^{n-1}$ such that
the associated generalized Wronskian $W$ does not vanish identically.
Let $\gamma^0=(0,\ldots,0).$
If $P$ is an irreducible element which divides $f_i$ with
multiplicity $e>|\gamma^{n-1}|,$ then $P$ divides $W$
with multiplicity at least $e-|\gamma^{n-1}|.$
Moreover, if \hbox{$\mathrm{char~}\mathbf{F}=p>0$}
and if $p^t$ divides $e$ and
$$
    p^t>\max\{\gamma_i^j :
    1\le j \le n-1 \text{~and~}
    1\le i \le m\},
$$
then $P^e$ divides $W.$
\end{lemma}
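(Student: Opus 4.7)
The proof should be a short calculation that reduces everything to Proposition~\ref{divDgamma} applied entry-wise in the $i$-th column of the Wronskian matrix. Write
$$
    W = \sum_{\sigma\in S_n} \mathrm{sgn}(\sigma)
    \prod_{j=0}^{n-1} D^{\gamma^j}\! f_{\sigma(j)},
$$
where I index rows by $j=0,\dots,n-1$ (corresponding to the multi-index $\gamma^j$) and columns by $k=0,\dots,n-1$ (corresponding to the function $f_k$). The plan is to show that every entry of the $i$-th column, or every term in the Leibniz expansion, is divisible by $P$ to the required power.

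For the first assertion, fix a permutation $\sigma$, and let $j^\ast = \sigma^{-1}(i)$, so that the factor $D^{\gamma^{j^\ast}}\! f_i$ appears exactly once in the corresponding term. Since $e > |\gamma^{n-1}| \ge |\gamma^{j^\ast}|$, Proposition~\ref{divDgamma} tells us that $P^{e-|\gamma^{j^\ast}|}$ divides $D^{\gamma^{j^\ast}}\! f_i,$ and hence $P^{e-|\gamma^{n-1}|}$ divides $D^{\gamma^{j^\ast}}\! f_i$ as well. Consequently $P^{e-|\gamma^{n-1}|}$ divides the entire $\sigma$-term, and since this holds for every $\sigma$, it divides $W.$

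For the moreover part, suppose $\mathrm{char}\,\mathbf{F} = p > 0$, that $p^t$ divides $e$, and that $p^t > \max\{\gamma_i^j : 1 \le j \le n-1,\ 1\le i\le m\}$. For every $j \in \{1,\ldots,n-1\}$, we then have $p^t > \max\{\gamma_1^j,\dots,\gamma_m^j\}$, so the moreover clause of Proposition~\ref{divDgamma} gives that $P^e$ divides $D^{\gamma^j}\! f_i.$ For $j=0$ we have $D^{\gamma^0}\! f_i = f_i,$ which is divisible by $P^e$ by hypothesis. Thus every entry of the $i$-th column of the Wronskian matrix is divisible by $P^e,$ and so $P^e$ divides each term in the Leibniz expansion, hence divides $W.$

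This is all elementary once Proposition~\ref{divDgamma} is in hand; the only mild subtlety is remembering that the row $j=0$ must be treated separately in the moreover part (since the hypothesis on $p^t$ is only imposed on $\gamma^1,\dots,\gamma^{n-1}$), but this row is trivially handled because $D^{\gamma^0}\!f_i = f_i$. I do not anticipate any real obstacle.
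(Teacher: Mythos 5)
Your proposal is correct and is exactly the argument the paper intends: the paper's proof consists of the single sentence ``This follows immediately from Proposition~\ref{divDgamma},'' and your Leibniz-expansion (equivalently, multilinearity in the $i$-th column) write-up, including the separate treatment of the $j=0$ row in the moreover part, is the straightforward filling-in of that one-liner.
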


\begin{remark*} Note that
\hbox{$\max\{\gamma_i^j :
1\le j \le n-1 \text{~and~}
1\le i \le m\}\le|\gamma^{n-1}|.$}
\end{remark*}

\begin{proof} This follows immediately from Proposition~\ref{divDgamma}.
\end{proof}

\begin{lemma}\label{wronsktrunc}
Let $f_0,\ldots,f_{n-1}$ be entire functions in $\mathcal{E}_m.$
Let $\gamma^1,\dots,\gamma^{n-1}$ be multi-indices
with $|\gamma^1|\le\ldots\le|\gamma|^{n-1}$ such that
the associated generalized Wronskian $W$ does not vanish identically.
Let $\gamma^0=(0,\ldots,0).$
Let $F=f_0\cdots f_{n-1}.$
Let $k\ge2$ be the smallest integer such that for
every $k$ distinct indices $i_1,\dots,i_k$ in
$\{0,\dots,n-1\}$ one has $\gcd(f_{i_1},\dots,f_{i_k})=1,$
or if no such $k$ exists, let $k=n+1.$
Let
$$
    \ell=\sum_{i=1}^{k-1}|\gamma^{n-i}|.
$$
If $P$ is an irreducible element which divides $F$ with multiplicity
$e>\ell,$
then $P$ divides $W$ with multiplicity at least $e-\ell.$
\end{lemma}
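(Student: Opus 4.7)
The plan is to read off the $P$-adic valuation of $W$ from the Laplace expansion of the generalized Wronskian, using the per-entry divisibility from Proposition~\ref{divDgamma} together with the combinatorial restriction imposed by the hypothesis on $k$.

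First I would observe that by the minimality of $k$, the irreducible $P$ can divide at most $k-1$ of the functions $f_0,\dots,f_{n-1}$: otherwise some $k$-tuple $f_{i_1},\dots,f_{i_k}$ would share $P$ as a common factor, contradicting $\gcd(f_{i_1},\dots,f_{i_k})=1$. Enumerate by $j_1,\dots,j_{k'}$, with $k'\le k-1$, the indices at which $P$ does divide $f_{j_i}$, with exact multiplicities $e_{j_i}\ge 1$; since $F=f_0\cdots f_{n-1}$ and multiplicities add in the factorial rings $\mathcal{A}_m(r)$, we have $e=\sum_{i=1}^{k'}e_{j_i}$.

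Next, Proposition~\ref{divDgamma} gives that the $(s,j_i)$-entry $D^{\gamma^s}\!f_{j_i}$ of the Wronskian matrix is divisible by $P^{\max(0,\,e_{j_i}-|\gamma^s|)}$, while entries in any column $j\notin\{j_1,\dots,j_{k'}\}$ contribute no factor of $P$. Writing
\[
  W=\sum_\pi \mathrm{sgn}(\pi)\prod_{j=0}^{n-1} D^{\gamma^{\pi(j)}}\!f_j,
\]
each term is therefore divisible by $P$ to the power $\sum_{i=1}^{k'}\max(0,\,e_{j_i}-|\gamma^{\pi(j_i)}|)$. Because $\pi(j_1),\dots,\pi(j_{k'})$ are distinct elements of $\{0,1,\dots,n-1\}$, the corresponding $|\gamma^{\pi(j_i)}|$ are $k'$ distinct entries from the ordered list $|\gamma^0|\le|\gamma^1|\le\cdots\le|\gamma^{n-1}|$, so their sum is at most $\sum_{i=1}^{k'}|\gamma^{n-i}|$. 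Dropping the outer $\max$ and using $k'\le k-1$ together with $|\gamma^s|\ge 0$ yields
\[
  \sum_{i=1}^{k'}\max(0,\,e_{j_i}-|\gamma^{\pi(j_i)}|)
  \;\ge\; e-\sum_{i=1}^{k'}|\gamma^{n-i}|
  \;\ge\; e-\sum_{i=1}^{k-1}|\gamma^{n-i}|
  \;=\; e-\ell.
\]
Since this bound is uniform in $\pi$, every term of the Laplace expansion, and hence $W$ itself, is divisible by $P^{e-\ell}$.

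I do not anticipate a serious obstacle: the argument is essentially a rearrangement-style bookkeeping on top of the per-entry divisibility from Proposition~\ref{divDgamma}. The only delicate point to keep in mind is that the distinctness of $\pi(j_1),\dots,\pi(j_{k'})$ combined with the bound $k'\le k-1$ is precisely what forces the top-$(k-1)$ partial sum $\sum_{i=1}^{k-1}|\gamma^{n-i}|$ to be the correct right-hand side, matching the definition of $\ell$.
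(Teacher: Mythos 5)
Your proof is correct and follows essentially the same route as the paper: bound the number of $f_j$ divisible by $P$ by $k-1$ using the minimality of $k$, apply Proposition~\ref{divDgamma} entrywise, and observe that in any term of the determinant the relevant $|\gamma^{\pi(j_i)}|$ are distinct, so their sum is at most the top-$(k-1)$ partial sum $\ell$. The paper states this more tersely (after sorting the multiplicities it asserts the bound $\sum_i\max\{0,e_i-|\gamma^{n-i-1}|\}$ directly), whereas you spell out the permutation expansion; the content is the same.
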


\begin{remark*} Note that in positive characteristic $p,$
even if $P$ divides $F$ with multiplicity a large multiple of $p,$
it does not necessarily mean that $P$ divides $W$ to the same multiplicity.
This is because the powers of $P$ may be split among the different $f_i,$
so $P$ need not divide any of the $f_i$ with multiplicity divisible by $p.$
\end{remark*}

\begin{proof}
By the hypothesis of the lemma, we may assume without loss
of generality by re-ordering the indices if necessary
that $P$ does not divide $f_j$ for $j\ge k-1.$  Let $e_i\ge0$
for $i=0,\dots,k-2$
be the multiplicities with which $P$ divides the $f_i,$ and
assume $e_0\ge \ldots \ge e_{k-2}.$ Then, by Proposition~\ref{divDgamma},
$P$ divides $W$ with multiplicity at least
$$
    \sum_{i=0}^{k-2}\max\{0,e_i-|\gamma^{n-i-1}|\}
    \ge \sum_{i=0}^{k-2}(e_i-|\gamma^{n-i-1}|)=e-\ell. \qedhere
$$
\end{proof}

\begin{proof}[Proof of Theorems~\ref{abcsum} and~\ref{abcprod}.]
If $\mathbf{F}$ has positive characteristic, let $G_i$ be as in the
statement of Theorem~\ref{abcsum}.  If $\mathbf{F}$ has
characteristic zero, let \hbox{$G_i=\gcd(f_i,S(f_i)^a).$} Then,
$G_i$ divides $\gcd(f_i,S(f_i)^a),$ and thus
$$
    N_{G_i}(0,r) \le N_{f_i}^{(a)}(0,r)
$$
for $r\ge1$ by Proposition~\ref{countingfactorsmaller}
and the definition of truncated counting functions.
Therefore, inequality~(\ref{abctrsum}) follows from
inequality~(\ref{abctrsumcharp}).  Hence, to prove Theorem~\ref{abcsum},
it suffices to prove~(\ref{abctrsumcharp}), where in characteristic
zero we interpret~(\ref{abctrsumcharp}) with $G_i$ as defined here in
the proof.

We next observe that it suffices to prove Theorem~\ref{abcsum}
assuming there are no vanishing subsums.  Indeed, if there are
vanishing subsums, simply group the $f_i$ into vanishing subsums
with no vanishing sub-subsums, and note that these subsums
still satisfy all the hypotheses of the theorem.
Summing~(\ref{abctrsumcharp})
over each minimal vanishing subsum clearly results
in~(\ref{abctrsumcharp}) for the general case.

Note also that if $\bar k=2$ in Theorem~\ref{abcprod}, then the
$f_j$ are pairwise relatively prime, in which case
$$
    N_F^{(\ell)}(0,r)=\sum_{j=0}^n N_{f_j}^{(\ell)}(0,r),
$$
and so Theorem~\ref{abcprod} follows from Theorem~\ref{abcsum}.
Thus, we henceforth assume~(\ref{nosubsum}) as we prove both theorems.

Consider the
$\mathbf{F}$-linear span of $f_0,\ldots,f_n$ as a $\mathbf{F}$-vector space,
and partition
$$
    \{0,\ldots,n\}=I_0\cup\ldots\cup I_{u-1},
    \qquad\textnormal{with~} J_j\in I_j \textnormal{~for~}j=0,\ldots,u-2
$$
as in Lemma~\ref{bm}. Let $J_{-1}=\emptyset.$
Also, without loss of generality, assume $0\in I_0.$
Let $n_j$ be the cardinality of $I_j$
for $j=0,\dots,u-1.$ Note that $n_0\le d+1$ by the minimality of $I_0$
and that $n_j\le d$ for $j\ge1$ by the minimality of
$I_j\cup J_{j-1}.$
Set
$$
    \gamma^{0,0}=\gamma^{1,0}=\ldots=\gamma^{u-1,0}=
    \gamma^0=\gamma^1=\ldots=\gamma^{u-1}=(0,\ldots,0).
$$
The $f_i$ for $i$ in $I_0\setminus\{0\}$ are linearly independent
over $\mathbf{F}.$ Therefore by Lemma~\ref{linindep} and
Theorem~\ref{gwronsk},
there exist multi-indices $\gamma^{0,1},\ldots,\gamma^{0,n_0-2}$
such that the generalized Wronskian $W_0$ formed by the $f_i$ with
respect to these multi-indices for
$i$ in $I_0\setminus\{0\}$ is not identically zero,
and moreover,
$$
    |\gamma^{0,i}|\le |\gamma^{0,i-1}|+c\le ci.
$$
Note that
if $n_0=2,$ we simply let $W_0$ be the $f_i$ for the unique
index $i$ in $I_0$ different from $0.$
Similarly, for $j=1,\ldots,u-1,$ there exist multi-indices
$\gamma^{j,1},\ldots,\gamma^{j,n_j-1}$ such that the generalized
Wronskian $W_j$ formed by the $f_i$ with respect to these multi-indices
for $i$ in $I_j$ is not identically zero
and
$$
    |\gamma^{j,i}|\le |\gamma^{j,i-1}|+c\le ci.
$$
The total number of multi-indices
we get this way is
$$
    n_0-2+n_1-1+\ldots+n_{u-1}-1=n-u.
$$
Write these multi-indices as $\gamma^u,\ldots,\gamma^{n-1}$ with
$$
    |\gamma^u|\le\ldots\le|\gamma^{n-1}|\le c(d-1),
$$
and note that
\begin{equation}\label{upone}
    |\gamma^i| \le |\gamma^{i-1}| + c.
\end{equation}
For each minimal index set $I_0,$ $J_0\cup I_1,$ \dots $J_{u-2}\cup I_{u-1},$
there is a linear dependence relation:
$$
    \sum_{i=0}^{n} c_{j,i}f_i=0,
$$
with $c_{j,i}$ non-zero elements of $\mathbf{F}$ when $i$
is in $J_{j-1}\cup I_j$ and $0$ otherwise. Of course this also
gives rise to the linear equations
$$
    \sum_{i=0}^n c_{0,i}
    D^{\gamma^{0,q}}\!f_i=0, \qquad
    q=0,\ldots,n_0-2
$$
and
$$
    \sum_{i=0}^n c_{j,i}
    D^{\gamma^{j,q}}\!f_i=0, \qquad
    q=0,\ldots,n_j-1, j=1,\ldots,u-1.
$$
Let $M$ be the $n\times(n+1)$-matrix
whose entries are $c_{j,i}D^{\gamma^{j,q}}\!f_i,$
where the columns are indexed by $i$ and the rows are indexed by
$j$ and $q.$  Note that the sum of each row of $M$ is zero.
Let $\Delta_i$ denote the determinant of the matrix $M$ with the $i$-th
column deleted.  Because the $i$-th column is the negative
of the  sum of the other columns,
$\Delta_i=\pm \Delta_j.$ From the block nature of $M,$
\begin{equation}\label{Dprod}
    \Delta_{0} = C_{0}W_0\cdots W_{u-1},
\end{equation}
where $C_{0}$ is a constant obtained by multiplying the
appropriate $c_{j,i}$'s, and hence is non-zero. Thus, $\Delta_i$
is non-zero for all $i,$ and up to a constant
is the product of the generalized Wronskians $W_j.$

Define
\begin{equation}\label{a}
    a = |\gamma^{n-1}|\le c(d-1),
\end{equation}
\begin{equation}\label{b}
    b=\sum_{i=u}^{n-1}|\gamma^i|\ge
    \sum_{i=u}^{n-2}\max\{1,|\gamma^{n-1}|-ci\}
    \ge a\left\lceil\frac{a}{c}\right\rceil-
    \frac{\displaystyle\left\lceil\frac{a}{c}\right\rceil
    \left(\left\lceil\frac{a}{c}\right\rceil-1\right)}{2}c\ge a
\end{equation}
and
\begin{equation}\label{abar}
    \bar a = \sum_{i=1}^{\bar k-1}|\gamma^{n-i}|
    \le c \sum_{i=1}^{\bar k -1}(d-i),
\end{equation}
where the first inequality in~(\ref{b}) follows from~(\ref{upone}).
Note also that
\begin{equation}\label{bbiggerabarbiggera}
    b\ge\bar a \ge a \ge 1,
\end{equation}
where $1\le a$ follows from the fact that not all the Wronskians $W_j$
can be $1\times1,$ for if they were, we would have
either all the $f_i$ constant or $\gcd(f_0,\ldots,f_n)\ne1.$

If $\mathbf{F}$ has characteristic $p>0,$ let $\sigma$ be the largest
integer such that
\begin{equation}\label{sigma}
    p^\sigma \le \max\{\gamma^i_j :
    u\le i \le n-1 \text{~and~}
    1\le j \le m\}\le a
    \text{~where~}
    \gamma^i=(\gamma^i_1,\dots,\gamma^i_m).
\end{equation}

We claim that $F$ divides
$$
    \Delta_0 \prod_{i=0}^n G_i.
$$
Indeed, suppose that $P$ is an irreducible element of
$\mathcal{E}_m$ which divides $f_i$ with exact multiplicity $e.$
If $\mathbf{F}$ has charactersitic zero, then
$P^{\min\{e,a\}}$ divides $G_i$ by Proposition~\ref{radical},
and if $e>a,$ then $P^{e-a}$ divides $W_j,$
where $i\in I_j,$ by Lemma~\ref{divW} and~(\ref{a}).
Now suppose $\mathrm{char~}\mathbf{F}=p$ and $p^v$ is the largest
power of $p$ dividing $e.$
If $p^v\le p^\sigma \le a,$ then $P^{\min\{e,a\}}$ divides $G_i$
by Propositon~\ref{pradical} and in the case $e>a,$
we have that $P^{e-a}$ divides $W_j,$
where $i\in I_j,$  again by Lemma~\ref{divW} and~(\ref{a}).
If $p^v>p^\sigma,$ then $P^e$
divides $\Delta_0$ by Lemma~\ref{divW} and~(\ref{sigma}).
Thus in all cases,
$P^e$ divides $W_jG_i,$ which divides $\Delta_0G_i$ by~(\ref{Dprod}).
Therefore,
\begin{equation}\label{Fsum}
    \log|F|_r\le\log|\Delta_0|_r + \sum_{i=0}^n \log|G_i|_r
    +O(1)
\end{equation}
by Corollary~\ref{factorsmaller}.

Let
$$
    F_0=\prod_{i\in I_0\setminus \{0\}} f_i, \qquad\textnormal{and}\qquad
    F_j=\prod_{i\in I_j} f_i \qquad j=1,\ldots,u-1.
$$
For each $j=0,\dots,u-1,$ the quotient $W_j/F_j$ is a determinant
of a matrix consisting of logarithmic derivatives, and so by
Lemma~\ref{HasseLDL},
$$
    \left|\frac{W_0}{F_0}\right|_r \le -\left(\sum_{q=0}^{n_0-2}
    |\gamma^{0,q}|\right)\log r
$$
and
$$
    \left|\frac{W_j}{F_j}\right|_r \le -\left(\sum_{q=0}^{n_j-1}
    |\gamma^{0,q}|\right)\log r \qquad\text{for~}j=1,\dots,u-1.
$$
Then,
$$
    \frac{f_{0}\Delta_{0}}{F}=C_{0}\frac{W_0}{F_0}\cdots
    \frac{W_{u-1}}{F_{u-1}}
$$
and hence,
$$
    \log|f_{0}|_r+\log|\Delta_{0}|_r-\log|F|_r \le -b\log r + O(1)
$$
by Lemma~\ref{HasseLDL} and~(\ref{b}).
Similarly, if $i$ is any index in $I_0,$
then we can write $W_0$ as a determinant involving $D^{\gamma^{0,q}}\!f_\ell$
for $\ell\ne i,$ and so
$$
    \frac{f_iW_0}{f_{0}F_0}
$$
is also a sum of products of logarithmic derivatives, and hence
$$
    \log\left|\frac{f_iW_0}{f_{0}F_0}\right|_r \le
    -\left(\sum_{q=0}^{n_0-2}|\gamma^{0,q}|\right)\log r
$$
too. Thus
$$
    \log|f_i|_r+\log|\Delta_{0}|_r-\log|F|_r \le -b\log r + O(1)
$$
for all $i$ in $I_0.$ Now let $i$ be in $I_1$
and let $j$ be in $J_0.$  Then, by a similar argument,
$$
    \frac{f_i\Delta_0}{F}=C_0\cdot\frac{f_jW_0}{f_0F_0}\cdot
    \frac{f_iW_1}{f_jF_1}\cdot\frac{W_2}{F_2}\cdots\frac{W_{u-1}}{F_{u-1}},
$$
and
$$
    \frac{f_jW_0}{f_{0}F_0}\qquad\textnormal{and}\qquad
    \frac{f_iW_1}{f_jF_1}
$$
are both sums and products of logarithmic derivatives. Hence
$$
    \log|f_i|_r+\log|\Delta_{0}|_r-\log|F|_r \le -b\log r + O(1).
$$
Continuing like so, we find that for all $i,$
$$
    \log|f_i|_r+\log|\Delta_{0}|_r-\log|F|_r \le -b\log r + O(1),
$$
whence
\begin{equation}\label{mainineq}
    \max_{0\le i \le n}\log|f_i|_r \le \log |F|_r - \log|\Delta_{0}|_r
    -b\log r + O(1).
\end{equation}
Combining this with~(\ref{Fsum}), we get
$$
    \max_{0\le i \le n}\log|f_i|_r \le \sum_{i=0}^n
    \log|G_i|_r
    -b\log r + O(1).
$$
Using the Poisson-Jensen-Green type formula~(\ref{poisson})
and the definition of counting functions, this can also be
written, for $r\ge1,$ as
$$
    \max_{0\le i \le n}\log|f_i|_r \le \sum_{i=0}^n
    N_{G_i}(0,r)
    -b\log r + O(1),
$$
which is precisely~(\ref{abctrsumcharp}).

We now show~(\ref{abctrprod}).
Let $P$ be an irreducible element of $\mathcal{E}_m$ that divides
$F.$  By the hypotheses of the theorem, there is at least one $f_i$
such that $P$ does not divide $f_i.$  Because, as above, we can write
$\Delta_{0}$ as a product of Wronskians not involving $f_i,$
we can use
Lemma~\ref{wronsktrunc} to conclude that $P$
divides $\Delta_{0}$ with multiplicity at least $e-\bar a$ by~(\ref{abar}).
Thus, $F/\gcd(F,\Delta_{0})$ divides
$\gcd(F,S(F)^{\bar a}).$
Hence, by Proposition~\ref{factorsmaller}, the Poisson-Jensen-Green
type formula~(\ref{poisson}),
and the definition of truncated counting functions, for $r\ge1,$
$$
    \log |F|_r - \log|\Delta_{0}|_r \le |\gcd(F,S(F)^{\bar a})|_r +O(1)
    = N^{({\bar a})}_F(0,r)+O(1).
$$
Combining this with~(\ref{mainineq}), we have for $r\ge1,$
$$
    \max_{0\le i \le n}\log|f_i|_r \le N^{({\bar a})}_F(0,r)
    -b\log r + O(1),
$$
which is~(\ref{abctrprod}).
\end{proof}

%\end{doublespacing}


\begin{thebibliography}{[MM-XX]}
\bibitem[An]{AnSV} Vu Hoai An, Height of $p$-adic holomorphic maps
in several variables and applications, \textit{Acta Math.\ Vietnam.}
\textbf{27}  (2002),  257--269.
\bibitem[AM 1]{AnManhCartan} Vu Hoai An and Doan Quang Manh,
$p$-adic Nevanlinna-Cartan theorem in several variables for Fermat type
hypersurfaces, \textit{East-West J.\ Math.}  \textbf{4}  (2002),  87--99.
\bibitem[AM 2]{AnManh} Vu Hoai An and Doan Quang Manh,
The ``$ABC$'' conjecture for $p$-adic entire functions of several variables,
\textit{Southeast Asian Bull.\ Math.}  \textbf{27}  (2004),  959--972.
\bibitem[BT]{BayatTeimoori} M.~Bayat and H.~Teimoori,
A new bound for an extension of Mason's theorem for functions of several
variables,  \textit{Arch.\ Math.} (Basel)  \textbf{82}  (2004),  230--239.
\bibitem[BGR]{BGR} S.~Bosch, U.~G\"untzer, and R.~Remmert,
\textit{Non-Archimedean analysis,}
Grundlehren der Mathematischen Wissenschaften \textbf{261},
Springer-Verlag, Berlin, 1984.
\bibitem[BE]{BoutabaaEscassut}A.~Boutabaa and A.~Escassut,
Nevanlinna theory in characteristic $p$ and applications,
\textit{Analysis and applications---ISAAC 2001 (Berlin),}
Int.\ Soc.\ Anal.\ Appl.\ Comput.\ \textbf{10},
Kluwer Acad.\ Publ., Dordrecht, 2003,   97--107.
\bibitem[BB]{BrowkinBrzezinski}
J.~Browkin and J.~Brzezi\'nski, Some remarks on the $abc$-conjecture,
\textit{Math.\ Comp.}  \textbf{62}  (1994),  931--939.
\bibitem[BM]{BrownawellMasser} W.~D.~Brownawell and D.~Masser,
Vanishing sums in function fields,
\textit{Math.\ Proc.\ Cambridge Philos.\ Soc.} \textbf{100}  (1986), 427--434.
\bibitem[Ch]{CherryGCD} W.~Cherry, Existence of GCD's
in rings of non-Archimedean entire functions, preprint.
\bibitem[CY]{CherryYe} W.~Cherry and Z.~Ye,
Non-Archimedean Nevanlinna theory in several variables and the
non-Archimedean Nevanlinna inverse problem, \textit{Trans.\ Amer.\ Math.\
Soc.}  \textbf{349}  (1997),  5043--5071.
\bibitem[dBo]{deBondt} M.~de~Bondt, Another generalization of
Mason's ABC-theorem, preprint: arXiv: 0707.0434v1.
\bibitem[Fu]{Fujimoto} H.~Fujimoto, Nonintegrated defect relation for
meromorphic maps of complete K\"ahler manifolds into
$P\sp {N\sb 1}(C)\times\cdots\times P\sp {N\sb k}(C),$
\textit{Japan.\ J.\ Math.} (N.S.)  \textbf{11}  (1985),  233--264.
\bibitem[Go]{Goldfeld} D.~Goldfeld, Modular forms, elliptic curves and
the $ABC$-conjecture in \textit{A panorama of number theory or the view
from Baker's garden (Z\"urich, 1999),}
Cambridge Univ. Press, Cambridge, 2002, 128--147.
\bibitem[Ha]{KhoaiSV} K.~Ha Huy, La hauteur des fonctions holomorphes
$p$-adiques de plusieurs variables, \textit{C.\ R.\ Acad.\ Sci.\ Paris
S\'er. I Math.}  \textbf{312}  (1991),  751--754.
\bibitem[HW]{HsiaWang} L.-C.~Hsia and J.~T.-Y.~Wang,
The $ABC$ theorem for higher-dimensional function fields,
\textit{Trans.\ Amer.\ Math.\ Soc.}  \textbf{356}  (2004), 2871--2887.
\bibitem[HY 1]{HuYangABC} P.-C.~Hu and C.-C.~Yang,
The ``$abc$'' conjecture over function fields,
\textit{Proc.\ Japan Acad.\ Ser.\ A Math.\ Sci.}  \textbf{76}  (2000),
118--120.
\bibitem[HY 2]{HuYangCm} P.-C.~Hu and C.-C.~Yang,
A note on the  abc conjecture,
\textit{Comm.\ Pure Appl.\ Math.}  \textbf{55}  (2002),  1089--1103.
\bibitem[HY 3]{HuYangBB} P.-C.~Hu and C.-C.~Yang,
A note on Browkin-Brzezi\'nski conjecture
in \textit{Ultrametric Functional Analysis,}
Contemp.\ Math. \textbf{384},
American Mathematical Socociety, Providence, RI, 2005, 101--109.
\bibitem[La]{Lang} S.~Lang, Old and new conjectured Diophantine inequalities,
\textit{Bull. Amer. Math. Soc. (N.S.)}  \textbf{23}  (1990),   37--75.
\bibitem[Ma 1]{MasonDiophEqn} R.~C.~Mason, \textit{Diophantine Equations over
Function Fields,}
London Mathematical Society Lecture Note Series \textbf{96},
Cambridge University Press, Cambridge, 1984.
\bibitem[Ma 2]{MasonNormForm} R.~C.~Mason, Norm form equations. I.,
\textit{J.\ Number Theory} \textbf{22} (1986), 190-207
\bibitem[NO]{NoguchiOchiai} J.~Noguchi and T.~Ochiai,
\textit{Geometric Function Theory in Several Complex Variables,}
Translations of Mathematical Monographs \textbf{80},
American Mathematical Society, Providence, RI, 1990.
\bibitem[Oe]{Oesterle} J.~Oesterl\'e, Nouvelles approches du ``th\'eor\`eme''
de Fermat, S\'eminaire Bourbaki, Vol.\ 1987/88.  Ast\'erisque  No.\
\textbf{161-162}
(1988), Exp.\ No.\ 694, 4, 165--186 (1989).
\bibitem[QT]{QuangTuan} N.~T.~Quang and P.~D.~Tuan,
A note on Browkin-Brzezinski's Conjecture, \textit{Int.\ J.\ Contemp.\
Math.\ Sciences,} \textbf{2} (2007), 1335--1340.
\bibitem[Ro]{Roth} K.~F.~Roth, Rational approximations to algebraic numbers,
\textit{Mathematika} \textbf{2} (1955), 1--20.
\bibitem[Sh]{Shabat} B.~V.~Shabat, \textit{Distribution of Values of
Holomorphic Mappings,} Translations of Mathematical Monographs \textbf{61},
American Mathematical Society, Providence, RI, 1985.
\bibitem[SS]{ShapiroSparer} H.~N.~Shapiro and G.~H.~Sparer,
Extension of a theorem of Mason,  \textit{Comm.\ Pure Appl.\ Math.} \textbf{47}
(1994), 711--718.
\bibitem[SY]{StewartYu} C.~L.~Stewart and K.~Yu,
On the $abc$ conjecture, II, \textit{Duke Math.\ J.} \textbf{108} (2001),
169--181.
\bibitem[St]{Stothers} W.~W.~Stothers,
Polynomial identities and Hauptmoduln,
\textit{Quart.\ J.\ Math.\ Oxford Ser.\ (2)} \textbf{32}  (1981), 349--370.
\bibitem[vFr~1]{vanFrankenhuysenThesis} M.~van Frankenhuysen, ``Hyperbolic
Spaces and the $abc$ Conjecture,'' Ph.D. thesis, Katholieke
Universiteit Nijmegen, 1995.
\bibitem[vFr~2]{vanFrankenhuysenPreprint} M.~van Frankenhuysen,
The ABC Theorem for meromorphic functions, preprint:
arXiv: 0805.1729v1.
\bibitem[Va]{Vaserstein} L.~N.~Vaserstein, Quantum $(abc)$-theorems,
\textit{J.\ Number Theory}  \textbf{81}  (2000), 351--358.
\bibitem[Voj]{Vojta} P.~Vojta, \textit{Diophantine Approximations and
Value Distribution Theory,} Lecture Notes in Mathematics \textbf{1239},
Springer-Verlag, Berlin, 1987.
\bibitem[Vol]{Voloch} J.~F.~Voloch, Diagonal equations over function fields,
\textit{Bol.\ Soc.\ Brasil.\ Mat.} \textbf{16} (1985), 29--39.
\bibitem[Z]{Zannier} U.~Zannier, Some remarks on the $S$-unit
equation in function fields, \textit{Acta Arith.} \textbf{64} (1993),
87--98.
\end{thebibliography}
\end{document}